\numberwithin{equation}{section}
\newtheorem{thm}{Theorem}[section]
\newtheorem{prop}[thm]{Proposition}
\newtheorem{lem}[thm]{Lemma}
\newtheorem{cor}[thm]{Corollary}
\newtheorem{ithm}{Theorem}
\newtheorem{iprop}[ithm]{Proposition}
\newtheorem*{ilem*}{Lemma}
\newtheorem*{iconj*}{Conjecture}
\newtheorem*{iprob*}{Problem}
\theoremstyle{definition}
\newtheorem{defi}[thm]{Definition}
\newtheorem*{defi*}{Definition}
\newtheorem{rem}[thm]{Remark}
\newtheorem*{rem*}{Remark}
\newtheorem*{rems*}{Remarks}
\newtheorem{exam}[thm]{Example}
\theoremstyle{definition}
\newtheorem{irem}[ithm]{Remark}
\newcommand*{\sC}{\mathscr{C}}
\newcommand*{\sF}{\mathscr{F}}
\newcommand*{\sO}{\mathscr{O}}
\newcommand*{\ZZ}{\mathbf{Z}}
\newcommand*{\RR}{\mathbf{R}}
\newcommand*{\NN}{\mathbf{N}}
\newcommand*{\se}{\subseteq}
\newcommand*{\inv}{^{-1}}
\newcommand*{\lra}{\longrightarrow}
\newcommand*{\one}{\boldsymbol{1}}
\newcommand*{\green}{\mathscr{G}}
\DeclareMathOperator{\finite}{Fin}
\DeclareMathOperator{\Span}{Span}
\title[Conditional means, vector pricings and fixed points]{Conditional means, vector pricings,\\ amenability and fixed points in cones}
\author[Nicolas Monod]{Nicolas Monod}
\address{EPFL, Switzerland}
\begin{document}

\begin{abstract}
We develop a generalization of conditional probability for arbitrary ordered vector spaces. A related problem is that of assigning a numerical value to one vector relative to another.

We characterize the groups for which these generalized probabilities can be stationary, respectively invariant. Our results deviate from the setting of classical probability and lead to a new criterion for amenability and for fixed points in cones.
\end{abstract}
\maketitle
\thispagestyle{empty}


\section{Introduction}
\subsection{The price of vectors}
Let $V$ be any ordered vector space. We want to compare \emph{numerically} any two positive vectors $u,v$. That is, we seek a ``rate'' $r(u,v)$ assigning a price to $u$ relative to $v$, when $v\neq 0$. We accept the value $r(u,v)= +\infty$ because it should correspond to $r(v,u)=0$.

In analogy with a trading market, we impose two axioms. First, the price should be additive. Secondly, ``market efficiency'' is the coherence condition that no free money can be finagled by a string of successive trades.
Formally, a \textbf{vector pricing} on $V$ is a map
\begin{equation*}
r\colon V^+ \times V^+ \lra [0, +\infty]
\end{equation*}
such that for all $u,v,w\in V^+$ the following hold.

\smallskip
\begin{enumerate}[left=2\parindent, labelsep=\parindent, topsep=3pt, itemsep=3pt, label=\normalfont(VP\arabic*)]
\item $r(u+v, w) = r(u, w) + r(v, w)$ when $w\neq 0$. \label{pt:VP:add}
\item $ r(u,w) = r(u, v) \, r(v, w)$ when defined. \label{pt:VP:cocycle}
\item $r(u,u)=1$.\label{pt:VP:one}
\end{enumerate}

\noindent
In \ref{pt:VP:cocycle}, the only \emph{undefined} operation is $+\infty\cdot 0$ (or vice-versa); we use the customary arithmetics of $[0, +\infty]$. The normalization \ref{pt:VP:one} just avoids $r(u,u)=0, +\infty$.  More properties follow:

\begin{ilem*}
Every vector pricing enjoys the following additional properties for all $u,v,w\in V^+$.
\begin{enumerate}[resume*]
\item $u\leq v$ implies $r(u, w) \leq r(v, w)$. \label{pt:VP:monotone}
\item  $r(\cdot, v)$ determines a (unique) positive linear functional on the ideal $V_v \se V$ (for $v\neq 0$). \label{pt:VP:extends}
\item $r(u, v) = 1/ r(v, u)$. \label{pt:VP:inv}
\item If $v\neq 0$ then $r(0, v) = 0$ and $r(v, 0) = +\infty$. \label{pt:VP:0}
\item $r(t u,v) = t\, r(u,v)$ for all $t\in \RR_{>0}$. \label{pt:VP:hom}
\end{enumerate}
\end{ilem*}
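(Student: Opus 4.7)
The plan is to establish the five properties in the order \ref{pt:VP:0}, \ref{pt:VP:monotone}, \ref{pt:VP:hom}, \ref{pt:VP:inv}, \ref{pt:VP:extends}, each step feeding off what precedes. For \ref{pt:VP:0}, apply \ref{pt:VP:add} with summands $0$ and $v$ (and $w=v\ne 0$) to get $r(v,v)=r(0,v)+r(v,v)$; since $r(v,v)=1$ by \ref{pt:VP:one}, this forces $r(0,v)=0$ in $[0,+\infty]$. Next, \ref{pt:VP:cocycle} with $u=w=v$ and intermediate point $0$ reads $1 = r(v,0)\cdot r(0,v) = r(v,0)\cdot 0$, which is defined with value $0$ unless $r(v,0)=+\infty$; since $1\ne 0$, we conclude $r(v,0)=+\infty$. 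Monotonicity \ref{pt:VP:monotone} is then immediate from \ref{pt:VP:add} in the form $r(v,w)=r(u,w)+r(v-u,w)\ge r(u,w)$ when $w\ne 0$, and from \ref{pt:VP:0} when $w=0$.

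For homogeneity \ref{pt:VP:hom}, induction via \ref{pt:VP:add} yields $r(nu,v)=n\,r(u,v)$ for $n\in\NN$, whence the reciprocal identity $r(u,v)=n\,r(u/n,v)$ extends \ref{pt:VP:hom} to all $t\in\QQ_{>0}$. The passage to real $t$ rests on monotonicity: bracketing $t$ between rationals $q<t<q'$, \ref{pt:VP:monotone} gives $q\,r(u,v)\le r(tu,v)\le q'\,r(u,v)$, and letting $q,q'\to t$ yields the identity when $r(u,v)<+\infty$; the case $r(u,v)=+\infty$ reduces to the observation that $r(qu,v)=+\infty$ for every $q\in\QQ_{>0}$. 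For the inversion identity \ref{pt:VP:inv}, applying \ref{pt:VP:cocycle} with $w=u$ and intermediate point $v$ gives $1=r(u,u)=r(u,v)\,r(v,u)$ whenever defined, settling the case $r(u,v)\in(0,+\infty)$; the extremal values $r(u,v)\in\{0,+\infty\}$ are pinned down either by rerunning the cocycle argument that established \ref{pt:VP:0} or directly from \ref{pt:VP:0} itself, in agreement with the conventions $1/0=+\infty$ and $1/(+\infty)=0$.

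The substantive step is \ref{pt:VP:extends}. Fix $v\ne 0$ and let $V_v=\{w\in V:-tv\le w\le tv\text{ for some }t>0\}$ be the principal order ideal. For $u\in V^+\cap V_v$, monotonicity and homogeneity give $r(u,v)\le r(tv,v)=t<+\infty$, so $r(\cdot,v)$ restricts to a finite, additive, $\RR_{>0}$-homogeneous map on the positive cone of $V_v$. Every $w\in V_v$ decomposes as $w=(w+tv)-tv$ with both summands in $V^+\cap V_v$, so I would set $\phi(w):=r(w+tv,v)-r(tv,v)$ and verify independence of the decomposition: if $u_1-u_2=u_1'-u_2'$ in $V^+\cap V_v$, then applying \ref{pt:VP:add} to $u_1+u_2'=u_1'+u_2$ yields $r(u_1,v)-r(u_2,v)=r(u_1',v)-r(u_2',v)$. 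Linearity and positivity of $\phi$ follow from the corresponding cone properties, and uniqueness is clear since $V^+\cap V_v$ spans $V_v$. The main obstacle is precisely this bridge from the partially defined cocycle arithmetic on $V^+\times V^+$ to an honest linear functional on a vector space; once finiteness of $r(\cdot,v)$ on $V^+\cap V_v$ is secured, the rest is the standard cone-to-space extension.
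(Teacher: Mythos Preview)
Your proof is correct, but the organization differs from the paper's in a couple of instructive ways.

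The paper proves \ref{pt:VP:monotone} first, then \ref{pt:VP:inv} and \ref{pt:VP:0} together from \ref{pt:VP:cocycle}, and finally handles \ref{pt:VP:extends} and \ref{pt:VP:hom} simultaneously: it introduces the (possibly larger) ideal $\finite(v)=\finite^+(v)-\finite^+(v)$ where $\finite^+(v)=\{u\in V^+:r(u,v)<+\infty\}$, observes that $r(\cdot,v)$ is a finite positive additive map on this cone, and then invokes the standard lemma that such a map extends uniquely to a positive \emph{linear} functional on the span (citing \cite[Lemma~83.1]{ZaanenII} or \cite[Lemma~1.26]{Aliprantis-Tourky}). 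Homogeneity \ref{pt:VP:hom} then drops out for free from linearity, with the case $r(u,v)=+\infty$ being trivial.

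Your route is more self-contained: you establish \ref{pt:VP:hom} directly by the rational-to-real squeeze via monotonicity, and you carry out the cone-to-space extension by hand on $V_v$ rather than citing the lemma. This costs a few more lines but avoids external references; conversely, the paper's approach shows that $r(\cdot,v)$ actually extends to the larger ideal $\finite(v)\supseteq V_v$, a fact used later (e.g.\ in the proof of \Cref{thm:VP:extend}). Both arguments rest on the same hinge---finiteness of $r(\cdot,v)$ on the relevant positive cone via \ref{pt:VP:monotone}---so the difference is one of packaging rather than substance.
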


Above, the \textbf{ideal} $V_v$ refers to the smallest vector subspace containing $v$ that is closed under order-intervals.

\medskip
An elementary case is when we have a positive linear functional $p$ on $V$ that is \emph{faithful} in the sense that $p(v)>0$ for all $v\gneqq 0$. Then $r(u,v)=p(u)/p(v)$ is a vector pricing based on the ``gold standard'' $p$. 

However, in most cases of interest, even if some faithful functional exists, there will be none with the invariance or stationarity properties that we study. Otherwise, the rate $p(u)/p(v)$ would have the very uncharacteristic property that it never vanishes unless $u=0$.

A good analogy, which turns out to be a special case, is conditional probabilities: the whole point is to condition on any prior, which could be a null-event if we were forced to choose a global probability $p$ first.

\medskip
Contrary to positive functionals, vector pricings always exist:

\begin{ithm}\label{ithm:VP:exists-and-extends}
Every ordered vector space admits a vector pricing.

Moreover, every vector pricing on any vector subspace can be extended to the entire space.
\end{ithm}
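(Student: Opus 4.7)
The plan is to prove the extension statement first, since existence then follows by applying it to the trivial vector pricing on the zero subspace $\{0\}$.

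For extension, I would invoke Zorn's lemma on the poset of pairs $(W', r')$, where $W \se W' \se V$ is an intermediate subspace and $r'$ is a vector pricing on $(W')^+$ extending $r$, ordered by restriction. This poset is inductive since unions of chains of vector pricings are themselves vector pricings. Let $(W^*, r^*)$ be a maximal element; it suffices to show $W^* = V$. Suppose not and pick $v_0 \in V \setminus W^*$. If $(W^* + \RR v_0)^+ = (W^*)^+$ then $r^*$ extends trivially to $W^* + \RR v_0$, contradicting maximality. Otherwise there exists $v = w + tv_0 \in V^+ \setminus (W^*)^+$ with $t \neq 0$, and replacing $v_0$ by $v$ leaves the subspace $W^* + \RR v_0$ unchanged, so I may assume $v_0 \in V^+$.

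The crux is thus the one-dimensional extension: enlarge $r^*$ to $\wt W := W^* + \RR v_0$. Using the Lemma, $r^*$ corresponds to a compatible family $\{p_u^*\}$ of positive linear functionals on the ideals $W^*_u$ (for $u \in (W^*)^+ \setminus \{0\}$) with $p_u^*(u)=1$ and $p_u^*|_{W^*_{u'}} = p_u^*(u')\cdot p_{u'}^*$ whenever $W^*_{u'} \se W^*_u$. I would first construct a positive linear functional $p_{v_0}$ on the new ideal $\wt W_{v_0}$ with $p_{v_0}(v_0)=1$ whose restriction to each $W^*_u$ (for $u \in (W^*)^+ \cap \wt W_{v_0}$) is proportional to $p_u^*$. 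Once $p_{v_0}$ is in place, the rest of the enlarged family is largely forced: for $u \in (W^*)^+ \cap \wt W_{v_0}$ with $p_{v_0}(u) > 0$, the extended $p_u$ on $\wt W_u \se \wt W_{v_0}$ must equal $p_{v_0}|_{\wt W_u}/p_{v_0}(u)$; the remaining cases (degenerate scaling, $u$ not bounded by a multiple of $v_0$, or new positive vectors $v \in \wt W^+ \setminus (W^*)^+$, which after rescaling take the form $w \pm v_0$) are handled by further positive extensions, coordinated to preserve all compatibility relations.

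The main obstacle is the construction of $p_{v_0}$. The required compatibility with every $p_u^*$ encodes linear data on the subspace $\RR v_0 + \sum_u W^*_u \se \wt W_{v_0}$ whose internal consistency flows precisely from the pre-established compatibility of $\{p_u^*\}$ via the cocycle relation. I would then invoke the Hahn--Banach--Kantorovich positive extension theorem to extend this data to a positive linear functional on all of $\wt W_{v_0}$ with the normalization $p_{v_0}(v_0)=1$. This is legitimate because the aforementioned subspace contains $v_0$ and is therefore majorizing in $\wt W_{v_0}$: every positive element there is dominated by a multiple of $v_0$.
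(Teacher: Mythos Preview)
Your route differs from the paper's. The paper does not run Zorn on vector pricings; it introduces a R\'enyi-type strict order $\prec$ on \emph{positive partial functionals} $(U,J)$ (an ideal together with a non-zero positive linear functional on it), proves that a chain for $\prec$ is maximal iff it is \emph{full}, and shows that a full chain determines a unique vector pricing. Existence is then Hausdorff maximality. For extension, one picks a maximal chain inside the family $\sF_r$ of partial functionals $(\finite(v), r(\cdot,v))$ coming from $r$, extends each $(U,J)$ in that chain to the ideal $W_U$ of the bigger space by Kantorovich, and enlarges to a maximal chain in $W$. The point is that one never has to coordinate \emph{all} the $p_u^*$ simultaneously, only those along a single chain, where $\prec$ makes the coordination automatic.

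Your one-dimensional step has a real gap at the construction of $p_{v_0}$. What you call ``linear data'' is not linear data: the requirement is that $p_{v_0}|_{W^*_u}$ be \emph{some} non-negative multiple of $p_u^*$, and the multiplier is the unknown $p_{v_0}(u)$ itself. Writing $U_0 = W^* \cap \wt W_{v_0}$, you actually need a positive linear $J$ on $U_0$ with $J|_{W^*_u}$ proportional to $p_u^*$ for every $u\in U_0^+\smallsetminus\{0\}$, \emph{and} such that $t v_0 + w \mapsto t + J(w)$ is positive on $\RR v_0 + U_0$ for the order of $V$. The latter forces $J(w) \geq \sup\{s\geq 0 : s v_0 \leq w\}$ for all $w\in U_0^+$, so $J=0$ is excluded. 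The cocycle relation for $\{p_u^*\}$ only says the proportionality constraints are mutually consistent on overlaps; it does not produce a single $J$ meeting all of them together with this positivity lower bound, and Kantorovich cannot be invoked until such a $J$ is in hand, since its hypothesis is an already-positive functional on a majorizing subspace. The ``remaining cases'' you set aside---notably $u\in (W^*)^+ \smallsetminus \wt W_{v_0}$, where $\wt W_u \supsetneq W^*_u$ and the extended $p_u$ must still be coordinated with $p_{v_0}$ on their overlap---present the same difficulty once more.
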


In contrast, extension theorems for positive functionals have strong restrictions. For instance, the Kantorovich theorem requires the subspace to majorize the entire space.

\subsection{Motivation: conditional probability}
Let $X$ be a set; we do not impose measurability constraints. A \textbf{conditional probability}, as axiomatized by R{\'e}nyi~\cite{Renyi55}, assigns a probability value $P(A|B) \in[0,1]$ for all $A,B\se X$ with $B\neq\varnothing$. We only require finite additivity. R{\'e}nyi's axioms amount to asking that $P(\cdot|B)$ is a probability on $B$ and that
\begin{equation*}
P(A|B) \, P(B|C) = P(A|C) \kern3mm\text{for all} \kern3mm A\se B \se C \text{ with } B\neq \varnothing.
\end{equation*}
In the unconditional case, there is a well-known identification between finitely additive probability measures and normalized positive integrals, called \textbf{means}, or \textbf{states}, on the space $\ell^\infty$ of all bounded functions. Therefore we ask:

\medskip
\emph{Can we assign a ``conditional mean'' $P(f|h) \in[0,1]$ for ${f, h\in \ell^\infty(X)}$\,?}

\medskip\noindent
Since the probability $P(\cdot|B)$ defines a mean on $\ell^\infty(B)$ and thus $\ell^\infty(X)$, our question is really about the \emph{prior} $h$ in $P(\cdot | h)$.

\medskip
Thus, we define a \textbf{conditional mean on $X$} to be a function $P$ with values in $[0,1]$ defined on all pairs of bounded functions $0\leq f \leq h \neq 0$ and such that the following hold whenever defined.
\begin{enumerate}[left=2\parindent, labelsep=\parindent, topsep=3pt, itemsep=3pt, label=\normalfont(CM\arabic*)]
\item $P(f_1+f_2 | h) = P(f_1| h)+P(f_2 | h)$. \label{pt:CMX:add}
\item $P(f_1|h) = P(f_1|f_2)\, P(f_2|h)$.\label{pt:CMX:trans}
\item $P(f|f)=1$.\label{pt:CMX:one}
\end{enumerate}

\medskip
These axioms suggest an obvious source of conditional means: take a vector pricing $r$ on $V=\ell^\infty(X)$ and define $P(f|h) = r(f,h)$ when $0\leq f\leq h$. This is a functional generalization of the approach studied by Armstrong--Sudderth~\cite{Armstrong-Sudderth} in the case of probabilities. We will see that vector pricings on $\ell^\infty(X)$ and conditional means on $X$ are essentially equivalent (\Cref{sec:CM}).

\medskip
Remarkably, conditional means are not simply an extension of conditional probabilities, unlike in the unconditional case. We begin with a warning pennant based on the recent construction of stationary measures~\cite{Alhalimi-Hutchcroft-Pan-Tamuz-Zheng_pre}.  Recall that the random walk defined by a distribution $\mu$ on a (discrete) group $G$ is irreducible as a Markov chain iff $\mu$ is \textbf{non-degenerate}, i.e.\ its support generates $G$ as a semigroup. A conditional probability or mean is \textbf{$\mu$-stationary} if
\begin{equation*}
P(\mu * \cdot|\cdot) = P(\cdot|\cdot)
\end{equation*}
holds whenever defined, where $*$ denotes convolution.

\begin{ithm}[Based on~\cite{Alhalimi-Hutchcroft-Pan-Tamuz-Zheng_pre}]\label{ithm:stat:meas}
Let $\mu$ be a finitely supported non-degenerate probability distribution on a group $G$.

Then $G$ admits a $\mu$-stationary conditional probability.
\end{ithm}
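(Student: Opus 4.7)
The plan is to produce the $\mu$-stationary conditional probability by restricting a $\mu$-stationary conditional mean on $G$ to indicator functions, using the correspondence (to be established in~\Cref{sec:CM}) between conditional means on $G$ and vector pricings on $\ell^\infty(G)$. The task thus reduces to constructing a $\mu$-stationary vector pricing on $\ell^\infty(G)$.

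First, I would invoke~\cite{Alhalimi-Hutchcroft-Pan-Tamuz-Zheng_pre} to obtain a $\mu$-stationary mean $m$ on $\ell^\infty(G)$. The formula $r_m(u,v) := m(u)/m(v)$ then defines a $\mu$-stationary partial vector pricing on the collection of pairs with $m(v)>0$, stationarity being immediate from $m(\mu*f)=m(f)$. Applying Theorem~A extends $r_m$ to a vector pricing $r$ on the entire space. A generic extension need not remain $\mu$-stationary off the ``support'' of $m$, so an additional argument is needed to enforce stationarity there.

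The main step is then a fixed-point argument in the cone $\mathcal{E}$ of all vector pricings on $\ell^\infty(G)$ extending $r_m$. This set is non-empty by Theorem~A, compact in the topology of pointwise convergence, and carries the natural pullback action of $G$, and hence of $\mu$ by averaging. Vector pricings are \emph{not} stable under arbitrary convex combinations --- the cocycle axiom~\ref{pt:VP:cocycle} is multiplicative, not linear --- so one cannot deploy Markov--Kakutani directly. However, extensions of a fixed pricing $r_m$ agree on its domain and differ only on the complementary null part, where the multiplicative constraint essentially trivializes; within $\mathcal{E}$ there is thus enough restricted linear structure to define a $\mu$-averaging map and apply the fixed-point theorem in cones announced in the abstract. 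The resulting fixed point is the desired $\mu$-stationary vector pricing.

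The principal obstacle, and the substantive technical content, is precisely this restricted-convexity analysis: identifying the sub-cone on which averaging is admissible and verifying that the $\mu$-action preserves it. Once this is in place, the remaining verification that the associated conditional mean restricts to a conditional probability in the sense of R\'enyi on indicator functions is routine from the axioms \ref{pt:VP:add}--\ref{pt:VP:one}.
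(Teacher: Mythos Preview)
Your approach has a fatal flaw: you propose to construct a $\mu$-stationary vector pricing on all of $\ell^\infty(G)$, but \Cref{ithm:stat:mean} shows that such a pricing exists \emph{only} when $G$ is amenable. Since \Cref{ithm:stat:meas} is meant to hold for arbitrary groups, your strategy cannot succeed. This is exactly the point of \Cref{irem:step}: conditional probabilities do not in general extend to conditional means on $\ell^\infty$, and the paper works instead with the space of \emph{step-functions}, where the obstruction disappears.

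There is a second, related gap. You invoke~\cite{Alhalimi-Hutchcroft-Pan-Tamuz-Zheng_pre} for a single stationary mean $m$, and then hope to repair stationarity on the ``null part'' by a fixed-point argument. But the actual content of that reference is much stronger: for \emph{every} non-empty $A\se G$ there is a $\mu$-stationary finitely additive measure normalized on $A$. The paper uses this full family to verify the hypothesis of \Cref{ithm:invariant-VP} (applied to the semigroup generated by $\mu$) on the step-function space: for each $v\gneqq 0$ the orbit ideal $V_{Gv}$ coincides with $V_{G\one_A}$ where $A=\mathrm{supp}(v)$, and the stationary measure for $A$ furnishes the required invariant functional. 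No fixed-point argument on a space of pricings is needed, and your proposed one would not work anyway: the ``fixed-point theorem in cones'' is a \emph{property} that some groups possess, not a universally applicable tool, and the restricted-convexity structure you allude to is not substantiated.
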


Going from probabilities to means changes the picture dramatically, and indeed the outcome depends on the group:

\begin{ithm}\label{ithm:stat:mean}
Let $\mu$ be a symmetric non-degenerate probability distribution on a group $G$.

Then $\ell^\infty(G)$ admits a $\mu$-stationary conditional mean if and only if $G$ is amenable.
\end{ithm}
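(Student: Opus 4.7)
My plan splits the theorem into its two implications; the ``only if'' is a classical specialization, while the ``if'' requires a genuinely new construction.

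For the \emph{only if} direction, I specialize the conditional mean $P$ to the prior $\one$ (the constant function $1$). Setting $m(f) := P(f^+ \mid \one) - P(f^- \mid \one)$ yields, by \ref{pt:CMX:add} and \ref{pt:CMX:one}, a positive linear functional on $\ell^\infty(G)$ with $m(\one) = 1$, i.e.\ a mean. Because $\mu$ is a probability measure, $\mu * \one = \one$, and the stationarity of $P$ descends to $\mu$-stationarity of $m$. Rosenblatt's theorem for symmetric non-degenerate $\mu$ then forces $G$ to be amenable.

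For the \emph{if} direction, amenability opens the door to imposing invariance. I would build, by transfinite induction, a vector pricing on $\ell^\infty(G)$ that is left-invariant in the first argument: $r(gu, h) = r(u, h)$ whenever defined. Such a pricing is automatically $\mu$-stationary for any probability $\mu$, since \ref{pt:VP:add} gives $r(\mu*u, h) = \sum_g \mu(g)\, r(gu, h) = r(u, h)$. I would initialize with the trivial invariant pricing $r(s\one, t\one) = s/t$ on $\RR\one$ and extend ideal by ideal over the poset of $G$-invariant ideals of $\ell^\infty(G)$ via \Cref{ithm:VP:exists-and-extends}.

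The principal obstacle is \ref{pt:VP:cocycle}: the cocycle is multiplicative, so vector pricings themselves do not form a convex set and ordinary Day--Markov--Kakutani averaging is unavailable. The resolution I expect is to average at the level of the positive linear functionals from \ref{pt:VP:extends}: given an invariant pricing on a $G$-invariant ideal $W$ and an arbitrary extension to a $G$-invariant enlargement $W'$ provided by \Cref{ithm:VP:exists-and-extends}, the compatible extensions of each $p_h = r(\,\cdot\,,h)$ form a convex, weak-$*$-compact, $G$-invariant set of positive functionals, so amenability supplies an invariant member by Markov--Kakutani. Reassembling through the cocycle \ref{pt:VP:cocycle} gives a $G$-invariant pricing on $W'$, and Zorn terminates on all of $\ell^\infty(G)$. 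The main technical difficulty I anticipate is the simultaneous coherent choice of invariant extensions across all priors in $W'$ at once, which I expect is precisely where the paper's ``fixed points in cones'' apparatus is meant to supply the decisive input.
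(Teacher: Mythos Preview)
Both halves of your plan have genuine gaps.

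\medskip
\textbf{The ``only if'' direction.} Specializing to the prior $\one$ gives a mean $m$ with $m(\mu * f)=m(f)$, but this does \emph{not} force amenability: for any group and any probability $\mu$, the space of means on $\ell^\infty(G)$ is weak-$*$ compact convex and the map $m\mapsto m\circ(\mu*\,\cdot\,)$ is affine and continuous, so a $\mu$-stationary mean always exists by Schauder--Tychonoff. There is no ``Rosenblatt theorem'' turning a single stationary mean into amenability. The paper's argument exploits stationarity at a \emph{different} prior: when $G$ is non-amenable, Kesten gives $\varrho(\mu)<1$, the generalized Green function $\green_z=\sum_{n\geq 0} z^n \mu^{*n}$ (for $1<z<1/\varrho(\mu)$) is bounded, and $\mu*\green_z\leq z^{-1}\green_z$. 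Stationarity of the pricing at $v=\green_z$ then yields $1=r(\green_z,\green_z)=r(\mu*\green_z,\green_z)\leq z^{-1}$, a contradiction. The whole point is that conditional means must be stationary for \emph{all} priors, not just $\one$.

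\medskip
\textbf{The ``if'' direction.} You aim to build a fully $G$-invariant vector pricing on $\ell^\infty(G)$, but by \Cref{ithm:FPC} this is equivalent to $G$ having the fixed-point property for cones, which is strictly stronger than amenability: the lamplighter $(\ZZ/2)\wr\ZZ$ is amenable yet has no $G$-invariant conditional mean. So your Markov--Kakutani/Zorn scheme cannot succeed for all amenable $G$; the obstruction you anticipate (coherent simultaneous invariant extensions across all priors) is real and fatal in that generality. The paper instead works with the \emph{semigroup} generated by $\mu$, not with $G$: it invokes an eigenfunctional theorem to produce, on each orbit ideal $V_{\langle\mu\rangle v}$, a positive functional $J$ with $J\circ(\mu*\,\cdot\,)=t\,J$ and shows $t\geq\varrho(\mu)$; amenability and Kesten force $t=1$, so $J$ is $\mu$-invariant. \Cref{ithm:invariant-VP} then assembles these into a $\mu$-stationary (but generally not $G$-invariant) pricing.
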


\begin{irem}\label{irem:step}
Comparing the two theorems, it follows that in general there is no canonical extension of a conditional probability to a  conditional mean on $\ell^\infty$.
However, on the space of \textbf{step-functions}, i.e.\ functions with finitely many values, a conditional probability does give a conditional mean (\Cref{prop:CP-CM-step}). The proof of \Cref{ithm:stat:meas} will be given in this form.
\end{irem}

The non-amenable case of \Cref{ithm:stat:mean} follows rapidly from considering a generalized Green function. The amenable case is substantially more difficult and motivated us to establish an ``eigenfunctional theorem'', see~\cite{Monod_eigenform_pre}, which enters the proof.

\medskip
Since amenability is equivalent to admitting a $G$-invariant mean, we now must ask: when does $G$ admit a $G$-invariant conditional mean? i.e. $P(g f|h)=P(f|h)$ for all $g\in G$, where defined. The answer involves the fixed-point property for cones introduced in~\cite{Monod_cones}. For the time being, we only mention that a number of equivalent characterizations are established in~\cite{Monod_cones}, among which a translate property that goes back to Dixmier~\cite{Dixmier50}, Greenleaf~\cite{Greenleaf69} and Rosenblatt~\cite{Rosenblatt73}. It is further shown in~\cite{Monod_cones} that this fixed-point property is preserved under subgroups, quotients, directed unions, central extensions and products with subexponential groups. It fails notably for groups containing free semigroups, e.g.\ the lamplighter $(\ZZ/2) \wr \ZZ$.

\begin{ithm}\label{ithm:FPC}
Let $G$ be a group.

\nobreak
Then $\ell^\infty(G)$ admits a $G$-invariant conditional mean if and only if $G$ has the fixed-point property for cones.
\end{ithm}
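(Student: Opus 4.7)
The plan is to transport the problem to vector pricings via the equivalence with conditional means mentioned earlier in the introduction, and then exploit the translate-type characterizations of FPC from \cite{Monod_cones}.

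For the ``only if'' direction, suppose $\ell^\infty(G)$ admits a $G$-invariant conditional mean, equivalently a $G$-invariant vector pricing $r$. For each nonzero $h \in \ell^\infty(G)^+$, the Lemma supplies a positive linear functional $\phi_h = r(\cdot, h)$ on the ideal $V_h$ with $\phi_h(h)=1$, and the $G$-invariance of $r$ translates to $\phi_h(gf) = \phi_h(f)$ whenever $f, gf \in V_h$. Given $f_1,\ldots,f_n \in \ell^\infty(G)^+$ and $g_1,\ldots,g_n \in G$, set $h = \one + \sum_i(f_i + g_i f_i)$ so that $h\geq\one$ and $V_h = \ell^\infty(G)$. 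Put $\psi := \sum_i(g_i f_i - f_i) \in V_h$. Invariance yields $\phi_h(\psi) = 0$, while the pointwise inequality $\psi \geq \bigl(\inf_G(\psi/h)\bigr)h$ combined with positivity of $\phi_h$ forces $\inf_G(\psi/h) \leq 0$, i.e.\ the relative translate inequality
\[
\sup_{x\in G}\frac{\sum_i(f_i - g_i f_i)(x)}{h(x)} \geq 0.
\]
By \cite{Monod_cones} this inequality, for all finite configurations $(f_i,g_i)$ and all denominators $h$, characterizes FPC.

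For the ``if'' direction, suppose $G$ has FPC. I would construct a $G$-invariant vector pricing on $\ell^\infty(G)$ by a Zorn/transfinite induction modeled on the extension part of Theorem~A, adjoining the $G$-orbit of one new vector at each step. The extension step reduces to a fixed-point problem on the cone of positive linear functionals on the newly enlarged ideal that restrict correctly to the previous data and are normalized to $1$ on the new generator. This cone is nonempty by Theorem~A and carries a natural $G$-action; FPC then furnishes a $G$-fixed extension, and axioms \ref{pt:VP:add}--\ref{pt:VP:one} propagate via the Lemma's correspondence between vector pricings and systems of compatible positive linear functionals on ideals.

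The main obstacle is orchestrating the transfinite construction so that the cocycle identity \ref{pt:VP:cocycle} is preserved across all ideals simultaneously, i.e.\ that the cone of admissible extensions remains nonempty under the ever-growing constraints imposed by the already-fixed data. This is precisely where FPC is needed rather than mere amenability: amenability handles only the single ideal $V_{\one} = \ell^\infty(G)$, whereas FPC delivers the required fixed points on the smaller, non-$G$-invariant ideals $V_v$ that arise during the inductive construction.
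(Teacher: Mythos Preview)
Your ``only if'' argument has a real gap. You establish the relative translate inequality only for the particular denominator $h=\one+\sum_i(f_i+g_if_i)\geq\one$, so that $V_h=\ell^\infty(G)$ and $\phi_h$ is an invariant mean on the whole space. This proves amenability, not FPC. The characterization you invoke would require the inequality for \emph{arbitrary} positive $h$, equivalently an invariant positive functional on the orbit ideal $V_{Gf}$ for every $f\gneqq0$, including $f$ whose orbit ideal is strictly smaller than $\ell^\infty(G)$. (Note also that the single-generator ideal $V_h$ need not be $G$-invariant, so your ``$\phi_h(gf)=\phi_h(f)$ whenever $f,gf\in V_h$'' is not quite $G$-invariance of a functional.) The paper's route is shorter: for any $f\gneqq0$, invariance gives $r(gf,f)=1$ for all $g$, so $Gf\subseteq\finite(f)$, hence $V_{Gf}\subseteq\finite(f)$ and $r(\cdot,f)$ is a $G$-invariant positive linear functional on the orbit ideal with value~$1$ at $f$. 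That is exactly the Theorem~7 criterion of \cite{Monod_cones}.

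For the ``if'' direction you correctly name the obstacle in your transfinite scheme---keeping the cone of admissible extensions nonempty as cocycle constraints accumulate---but you do not resolve it, nor do you check that those extension cones satisfy the coboundedness and local-boundedness hypotheses needed to invoke FPC. (Your last sentence also slips: FPC acts on $G$-spaces, so the relevant ideals are the $G$-invariant orbit ideals $V_{Gv}$, not the generally non-invariant $V_v$.) The paper avoids the transfinite bookkeeping altogether. It first proves \Cref{ithm:invariant-VP}: if every orbit ideal $V_{Gv}$ carries a nonzero $G$-invariant positive functional, then an invariant vector pricing exists; the proof is a compactness argument in the product space $[0,1]^{\sO}$ of conditional means, assembling finite chains of partial functionals rather than a well-ordered tower. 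FPC is then applied only once per orbit ideal, to the dual cone $(V_{Gv}^*)^+$ in the weak-$*$ topology, where the required hypotheses are routine.
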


In the case of conditional probabilities, an analogous characterization is due to Armstrong~\cite{Armstrong89}. Namely, $G$ supports a $G$-invariant conditional probability if and only if it is supramenable. Supramenability, studied in detail by Rosenblatt~\cite{Rosenblatt_PhD}, refers to the property first considered by von Neumann in his foundational study of the Banach--Tarski paradox and amenability~\cite{vonNeumann29}. Namely, $G$ is supramenable if every $G$-set $X$ supports an invariant finitely additive measure normalized on any given non-empty subset $A\se X$. The fixed-point property for cones implies supramenability, but we believe that it is stronger.

\begin{iconj*}
There exist supramenable groups $G$ without the fixed-point property for cones.
\end{iconj*}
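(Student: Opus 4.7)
The plan is to separate the two properties by exhibiting a group $G$ that is supramenable but admits an affine action on a proper cone with no fixed point. Since \cite{Monod_cones} asserts that FPC is preserved under products with subexponential groups, specialising to a trivial factor already shows that every group of subexponential growth has FPC; so any witness for the conjecture must be of exponential growth while remaining supramenable. Locating, or constructing, such a group is the first and most delicate task, since the existence of exponential-growth supramenable groups is itself a long-standing open problem.

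Assuming a plausible candidate $G$ is available, I would first extract from the translate-type characterisation of FPC recalled in \cite{Monod_cones} an explicit finite obstruction: elements $g_1,\dots,g_n \in G$, positive functions $f_1,\dots,f_n$ on some $G$-set $X$, and a base point $x_0 \in X$ such that $\sum_{i=1}^n(g_i f_i - f_i)$ is bounded below by a strictly positive constant on the orbit of $x_0$. Such a local witness rules out any $G$-invariant positive functional normalised at $x_0$, hence via \Cref{ithm:FPC} (after transporting the action to $\ell^\infty(G)$) yields failure of the fixed-point property for cones. In parallel, supramenability of $G$ would be verified via the orbit-by-orbit criterion of Rosenblatt \cite{Rosenblatt_PhD}, which reduces to checking the absence of paradoxical decompositions on each transitive $G$-set.

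Regarding candidates, the most promising sources are topological full groups of minimal subshifts, self-similar or branch groups in the tradition of Grigorchuk and Nekrashevych, and finitely generated groups of exponential growth without free sub\-semi\-groups: each such class offers detailed combinatorial control over orbits and over finitely generated subgroups, which is favourable for verifying supramenability, while exponential growth leaves room for the cocycle-theoretic obstruction described above.

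The principal obstacle is structural rather than technical: every presently known route to supramenability proceeds via subexponential growth or a close variant, and therefore automatically yields FPC as well. A resolution of the conjecture will accordingly require a genuinely new invariant --- conceivably a cohomological or $\ell^1$-cocycle-theoretic one --- sharp enough to distinguish ``no paradoxical decomposition on any single orbit'' from ``no paradoxical affine action on any cone''. The core difficulty is to produce a single group for which both sides of this distinction can be controlled simultaneously.
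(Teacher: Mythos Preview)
The statement you are attempting to prove is explicitly labelled as a \emph{Conjecture} in the paper, and the paper does not prove it. The surrounding text only offers heuristic evidence: the contrast between \Cref{ithm:stat:meas} and \Cref{ithm:stat:mean} is presented as an analogy suggesting that the implication ``supramenable $\Rightarrow$ FPC'' should fail, but no proof is given or claimed. There is therefore no proof in the paper against which your attempt can be compared.

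Your proposal is, correspondingly, not a proof either: it is a research outline. You correctly identify the central obstruction --- every known witness to supramenability passes through subexponential growth (or close variants), which already forces FPC by the stability results of \cite{Monod_cones} --- and you correctly note that producing a supramenable group of exponential growth is itself a well-known open problem. But this means your argument is conditional on resolving that open problem, and you offer no mechanism for doing so beyond listing candidate classes (topological full groups, branch groups, groups without free subsemigroups). The concrete obstruction you propose to extract, namely a finite ``translate-type'' witness $\sum_i (g_i f_i - f_i) \geq c > 0$, is indeed the right shape for showing failure of FPC via the characterisation in \cite{Monod_cones}, but you give no indication of how to produce such a witness in any specific group while simultaneously verifying supramenability. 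In short: your diagnosis of the difficulty is accurate, and it is precisely because of this difficulty that the statement remains a conjecture.
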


This would be parallel to the difference between \Cref{ithm:stat:meas} and \Cref{ithm:stat:mean} if stationarity is replaced by invariance. We view this as evidence for the conjecture.

\subsection{Invariance for pricings, means and functionals}
While our original motivation was $\ell^\infty$, we shall study conditional means for completely general ordered vector spaces (in~\Cref{sec:CM}), as we did in the case of vector pricings.

We sometimes assume that $V$ is \textbf{non-singular}, which means that for every $v\gneqq 0$ in $V$ there exists some positive linear functional which is non-zero on $v$. This is the case of almost all familiar examples; in fact, it holds for instance as soon as $V$ admits a Hausdorff locally convex topology compatible with the order~\cite[V.4.1]{Schaefer}.

The characterization of \Cref{ithm:FPC} can be established for general representations of a group $G$ on an ordered vector space $V$. A vector pricing $r$ on $V$ is called \textbf{$G$-invariant} if $r(u,v) = r(g u, v)$ holds for all $g\in G$ and $u,v\in V^+$.

\begin{ithm}\label{ithm:FPC-abstract}
Given a group $G$, the following statements are equivalent.
\begin{enumerate}[label=\normalfont(\roman*)]
\item $G$ has the fixed-point property for cones.\label{ipt:FPC:FPC}
\item Every order-bounded representation of $G$ on any non-singular ordered vector space admits an invariant vector pricing. \label{ipt:FPC:VP}
\item Like \ref{ipt:FPC:VP} but for conditional means. \label{ipt:FPC:CM}
\end{enumerate}
\end{ithm}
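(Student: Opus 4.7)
The plan is to prove the cycle (i) $\Leftrightarrow$ (ii) $\Leftrightarrow$ (iii), with the middle equivalence being the easier one. For (ii) $\Leftrightarrow$ (iii) I will rely on the natural correspondence between vector pricings on a non-singular ordered vector space $V$ and conditional means on $V^+$, which I expect \Cref{sec:CM} to establish in general (extending the $\ell^\infty(X)$ case discussed before \Cref{ithm:stat:mean}). Because that correspondence is defined purely by restriction to and extension from order intervals, it intertwines any $G$-action, so $G$-invariance transfers verbatim in both directions.

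For (i) $\Rightarrow$ (ii), I would fix an order-bounded representation on a non-singular $V$ and construct a $G$-invariant vector pricing by Zorn's lemma. The poset consists of pairs $(W, r)$ where $W \subseteq V$ is a $G$-invariant subspace closed under passing to the ideal $V_v$ of each $v \in W^+$, and $r$ is a $G$-invariant vector pricing on $W$. Chains admit obvious unions as upper bounds, so a maximal element $(W_\ast, r_\ast)$ exists; the goal is $W_\ast = V$. Otherwise, pick $v_0 \in V^+ \setminus W_\ast$. The stabilizer $G_{v_0}$ inherits the fixed-point property for cones from \cite{Monod_cones}, which --- via the cone analogue of the Dixmier--Greenleaf--Rosenblatt translate property cited in the Introduction --- yields a $G_{v_0}$-invariant positive linear functional $p$ on $V_{v_0}$ with $p(v_0) = 1$. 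Choosing $p$ compatibly with $r_\ast$ on $W_\ast \cap V_{v_0}$ and propagating equivariantly along $G v_0$ and its ideal-saturation (the last step via the extension statement of \Cref{ithm:VP:exists-and-extends}) enlarges $(W_\ast, r_\ast)$, contradicting maximality.

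For (ii) $\Rightarrow$ (i), I would use the same translate characterization in reverse. To verify FPC one needs, for every order-bounded $G$-action on an ordered vector space $V'$ and any $v_0 \in (V')^+ \setminus \{0\}$ on which some positive functional is non-zero, a $G$-invariant positive functional on $V'_{v_0}$ normalized at $v_0$. Quotienting $V'$ by the common kernel of all positive functionals yields a non-singular space on which the action descends with $v_0$ still non-zero. Applying (ii) produces a $G$-invariant vector pricing $r$ on the quotient, and then $\phi(\cdot) := r(\cdot, v_0)$ restricted to $V'_{v_0}$ is the required invariant functional by property \ref{pt:VP:extends}.

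The hardest part will be the extension step in (i) $\Rightarrow$ (ii): the functional supplied by FPC for $G_{v_0}$ must be forced to agree with $r_\ast$ on $W_\ast \cap V_{v_0}$ while simultaneously respecting the cocycle \ref{pt:VP:cocycle} along the whole orbit of $v_0$. The cleanest route I foresee is to apply the FPC characterization to a suitable quotient of $V_{v_0}$ in which the data of $r_\ast$ has already been absorbed, so that FPC delivers exactly the required correction; verifying that this quotient remains non-singular and the action remains order-bounded is where the real technical work will lie.
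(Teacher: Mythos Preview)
Your (ii)$\Leftrightarrow$(iii) is fine and matches the paper. The real problem is your plan for (i)$\Rightarrow$(ii).

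Applying FPC to the stabilizer $G_{v_0}$ acting on $V_{v_0}$ yields at best a $G_{v_0}$-invariant functional $p$, and ``propagating equivariantly along $Gv_0$'' cannot upgrade this to a $G$-\emph{invariant} pricing: if you set $r(u,gv_0)=p(g^{-1}u)$ then $r(hu,gv_0)=p(g^{-1}hu)$, which equals $r(u,gv_0)$ only if $p$ is already invariant under all of $G$, not merely under $G_{v_0}$. The paper instead applies FPC to the full group $G$ acting on the weak-$*$ dual cone of the \emph{orbit} ideal $V_{Gv}$ (not $V_{v_0}$); the order-boundedness and non-singularity hypotheses are exactly what make that cone action locally bounded and cobounded. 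This produces a genuinely $G$-invariant positive functional on each $V_{Gv}$, which is the hypothesis of \Cref{ithm:invariant-VP}.

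The Zorn step you flag as ``hardest'' is also a dead end, for the reason the paper makes explicit just before stating \Cref{ithm:invariant-VP}: axiom~\ref{pt:VP:cocycle} is not a convex condition, so there is no obvious way to force a new functional to be compatible with an existing partial pricing while preserving the cocycle. The paper avoids this entirely: rather than extending a single pricing transfinitely, it assembles the invariant functionals on the $V_{Gv}$ into a full pricing via a compactness argument on the space $[0,1]^{\sO}$ of candidate conditional means (\Cref{thm:PPF-VP}), where invariance becomes an ``elementary'' closed condition and finite chains suffice. Your Zorn poset never gets off the ground because the extension step has no mechanism.

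For (ii)$\Rightarrow$(i), your quotient construction is more work than needed: the paper simply applies (ii) to $\ell^\infty(G)$ and reads off the translate characterization of FPC from~\cite{Monod_cones} directly, with $J=r(\cdot,f)$ on the orbit ideal of $f$.
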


Above, the representation is called order-bounded if every $G$-orbit admits some upper bound in $V$.

\medskip
There is no obvious cone to which we could apply the fixed-point property to get an invariant vector pricing; for instance, axiom \ref{pt:VP:cocycle} is not a convex condition. Therefore, the main step for \Cref{ithm:FPC-abstract} will be to reduce the question to numerous cones of linear functionals using the following result.

\begin{ithm}\label{ithm:invariant-VP}
Let $G$ be a group with a representation on an ordered vector space $V$. Suppose that for every non-zero positive vector $v \in V$, the $G$-invariant ideal $V_{G v}$ generated by the orbit $Gv$ admits a non-zero $G$-invariant positive linear functional.

Then $V$ admits a $G$-invariant vector pricing. 
\end{ithm}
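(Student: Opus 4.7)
The plan is to reformulate the existence of a $G$-invariant vector pricing on $V$ as the existence of a coherent family of $G$-invariant positive linear functionals on the principal $G$-invariant ideals. For a $G$-invariant vector pricing $r$, the functional $\rho_v:=r(\cdot,v)$ extends from $V_v$ (via \ref{pt:VP:extends}) to a $G$-invariant positive functional on $V_{Gv}$ with $\rho_v(v)=1$. Axiom \ref{pt:VP:cocycle} encodes the compatibility
\[ \rho_w|_{V_{Gv}} \;=\; \rho_w(v)\,\rho_v \qquad\text{whenever } v\in V_{Gw}, \]
and, more generally, forces $\rho_v$ and $\rho_w$ to be proportional on the intersection $V_{Gv}\cap V_{Gw}$ in the incomparable case. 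Conversely, such a coherent family assembles into a $G$-invariant vector pricing, with $r(u,v)=+\infty$ for $u$ outside $V_{Gv}$ in suitably oriented cases.

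A preliminary normalization shows that every nonzero $G$-invariant positive functional $\mu$ on $V_{Gv}$ supplied by the hypothesis satisfies $\mu(v)>0$. Indeed, $V_{Gv}$ is generated as an ideal by the orbit $Gv$, so every positive element is dominated by a finite combination $\sum c_i g_i v$; if $\mu(v)=0$, then $G$-invariance forces $\mu(g_iv)=0$, and positivity propagates this to all of $V_{Gv}^+$, contradicting $\mu\neq 0$. We may therefore normalize to $\mu(v)=1$.

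The coherent family is constructed by Zorn's lemma. Let $\Sigma$ consist of pairs $(A,\{\rho_v\}_{v\in A})$ with $A\subseteq V^+\setminus\{0\}$, each $\rho_v$ a $G$-invariant positive linear functional on $V_{Gv}$ normalized $\rho_v(v)=1$, and the compatibility relations of the previous paragraph holding throughout $A$. Then $\Sigma$ is nonempty (take a singleton and a functional from the hypothesis), chains admit upper bounds (by taking unions), and Zorn produces a maximal element $(A^*,\rho^*)$. The goal is then to show $A^*=V^+\setminus\{0\}$.

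The main obstacle is the extension step at the maximal element. Given a putative missing $v_0$, the compatibility forced by $w\in A^*$ with $v_0\in V_{Gw}$ pins $\rho_{v_0}$ to be $\rho^*_w|_{V_{Gv_0}}/\rho^*_w(v_0)$, and the mutual agreement of these candidates across different such $w$'s follows by tracing the coherence of $\rho^*$ through any common upper ideal. The deeper constraints come from $w\in A^*$ with $V_{Gw}\subseteq V_{Gv_0}$ (or incomparable with $V_{Gv_0}$), where $\rho_{v_0}$ must restrict to a scalar multiple of $\rho^*_w$ on $V_{Gw}$ (respectively, be proportional to $\rho^*_w$ on the intersection). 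One must select, from the convex cone of invariant positive functionals on $V_{Gv_0}$ guaranteed by the hypothesis, a member that simultaneously meets all these linear constraints arising from the prior choices; this is the heart of the argument. I expect the selection to rely on an invariant form of the extension principle underlying Theorem~\ref{ithm:VP:exists-and-extends}, applied within the $G$-space $V_{Gv_0}$ to graft the partial data already fixed on sub-ideals into a single invariant functional on $V_{Gv_0}$ respecting all previous assignments.
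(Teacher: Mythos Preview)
Your proposal has a genuine gap at precisely the point you label ``the heart of the argument.'' In the Zorn extension step, you need an invariant positive functional $\rho_{v_0}$ on $V_{Gv_0}$ whose restriction to every $V_{Gw}$ with $w\in A^*$, $V_{Gw}\subseteq V_{Gv_0}$, is a scalar multiple of the already-fixed $\rho^*_w$. The hypothesis only hands you \emph{some} nonzero invariant positive functional on $V_{Gv_0}$; its restriction to $V_{Gw}$ is an invariant positive functional there, but in general there are many such, and there is no reason it should be proportional to the particular $\rho^*_w$ you chose earlier. The ``invariant form of the extension principle'' you invoke---extending a prescribed invariant positive functional from a $G$-invariant sub-ideal to a larger one---is not available in general; indeed, that is essentially the fixed-point property for cones, which is what this theorem is being used to characterise, not something one may assume. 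The paper itself remarks, just before its Theorem~4.2, that in the invariant setting ``it does not seem clear how to obtain maximal chains,'' which is exactly the obstruction your Zorn argument runs into.

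The paper sidesteps this entirely. Rather than building one globally coherent family, it observes that $G$-invariance is an \emph{elementary property} (a closed condition on finitely many coordinates at a time) and then uses compactness of the space of conditional means to reduce to finite sets $F\subseteq V^+$. For each such $F$ it builds a \emph{finite chain} in the R{\'e}nyi order $\prec$ by a short induction: take $w=\sum_{v\in F} v$, pick any invariant functional $J$ on $V_{Gw}$ from the hypothesis, set $F'=\{v\in F: J(v)=0\}$, and recurse on the strictly smaller $F'$; the resulting functionals form a chain because each new $J$ vanishes on the ideals treated at deeper levels. The R{\'e}nyi order is the key: compatibility within a chain means one functional \emph{vanishes} on the domain of the other, so no proportionality constraints ever arise and no invariant extension theorem is needed.
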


Due to the multiplicative property of vector pricings, $G$-invariance can be reformulated in other ways; for instance, $r(u,v) = r(g u, h v)$ holds for all $g,h\in G$, see \Cref{lem:def-inv}. On the other hand, a generally weaker property is obtained by demanding only $r(u,v) = r(g u, g v)$. We call it \textbf{$G$-equivariance}; it does not grant the invariance of any of the functionals $r(\cdot, v)$ when $v$ is given.

\medskip
In the classical setting of conditional probabilities, a confusion occurred in~\cite[Proposition~1.3]{Armstrong89} between invariance and equivariance. It is known that $\ZZ$ admits a conditional probability that is equivariant but not invariant, see Example~1 in~\cite{Pruss2013}. Following related ideas, we show that this can be mended, even in the more general setting of conditional means, by avoiding $\ZZ$ in the following sense.

\begin{iprop}\label{iprop:indicable}
Let $G$ be a finitely generated group without epimorphism $G\twoheadrightarrow\ZZ$.

Then every equivariant conditional mean on $\ell^\infty(G)$ is invariant. 

Likewise for vector pricings.
\end{iprop}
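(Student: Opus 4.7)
The plan is to reduce $G$-invariance to the single identity $\chi_v(g) := r(gv, v) = 1$ for every $g \in G$ and every $v \gneqq 0$. Combining the cocycle axiom with the equivariance $r(gu, gv) = r(u, v)$ gives $r(gu, v) = r(gu, gv)\, r(gv, v) = r(u, v)\, \chi_v(g)$ whenever the right-hand side is defined, so invariance amounts to $\chi_v \equiv 1$. The conditional mean assertion is handled in parallel (reducing to vector pricings via \Cref{ithm:VP:exists-and-extends} to extend beyond the constraint $f \le h$).

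The next step is to establish that $\chi_v$ behaves like a real character. Using the cocycle together with the equivariance identity $r(ghv, hv) = r(gv, v)$, one checks that $\chi_v(gh) = \chi_v(g)\chi_v(h)$ whenever the product is defined in $[0, \infty]$, that $\chi_v(g^{-1}) = \chi_v(g)^{-1}$, and (for $a$ with $\chi_v(a) \in (0, \infty)$) the conjugation invariance $\chi_v(aba^{-1}) = \chi_v(b)$. A decisive shortcut is provided by the $G$-fixed constant $\one \in \ell^\infty(G)$: equivariance forces $m := r(\cdot, \one)$ to be a $G$-invariant mean, and whenever $m(v) \in (0, \infty)$, cocycling through $\one$ yields $\chi_v(g) = m(gv)/m(v) = 1$ outright, settling the generic case.

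Granting that $\chi_v$ takes only finite positive values, the properties above make it a homomorphism $G \to \RR_{>0}$ vanishing on all commutators, so it descends to the abelianization $G^{\mathrm{ab}}$. The hypothesis now enters: since $G$ is finitely generated without epimorphism onto $\ZZ$, the abelianization $G^{\mathrm{ab}}$ is a finitely generated abelian torsion group, hence finite; as $\RR_{>0}$ is torsion-free, any homomorphism from a finite group into it is trivial, and $\chi_v \equiv 1$.

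The main obstacle is to rule out the degenerate values $\chi_v(g) \in \{0, \infty\}$, where the cocycle breaks down at the forbidden $0 \cdot \infty$. My approach is to assume $\chi_v(g_0) = \infty$ and derive a contradiction: multiplicativity in the extended sense gives $\chi_v(g_0^n) = \infty$ for all $n \geq 1$, while $g_0^N \in [G, G]$ for $N = |G^{\mathrm{ab}}|$. Expanding $g_0^N$ as a product of commutators of elements of a finite generating set and repeatedly applying the multiplicative and conjugation identities — threading intermediate vectors so as to avoid $0 \cdot \infty$ at each step — one aims to conclude $\chi_v(g_0^N) = 1$, the desired contradiction. Making this chase precise, while ensuring that the subgroup $N_v := \{g : \chi_v(g) \in (0, \infty)\}$ is large enough to support the argument, is the delicate point; it is there that the finite generation of $G$ and the hypothesis ``without epimorphism onto $\ZZ$'' really bite.
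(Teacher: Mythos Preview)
Your reduction to the identity $\chi_v(g):=r(gv,v)\equiv 1$ and the treatment of the case $\chi_v(G)\subseteq(0,\infty)$ via the finiteness of $G^{\mathrm{ab}}$ are correct, but the gap you flag at the end is genuine and your sketch does not close it. To evaluate $\chi_v$ on a commutator $[a,b]$ via multiplicativity you need $\chi_v(a),\chi_v(b)\in(0,\infty)$; there is no reason the elements of a generating set, or the factors appearing in any chosen commutator decomposition of $g_0^{N}$, should lie in $N_v$. Threading through other base vectors does not help either: equivariance gives only $\chi_{hv}(g)=\chi_v(h^{-1}gh)$, which merely permutes values within a conjugacy class and does not convert a degenerate value into a finite positive one. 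So the argument is circular precisely at the point you call delicate.

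The paper takes a different route that sidesteps this obstacle. After observing (as you do) that $r(\cdot,\one_G)$ is an invariant mean and hence $G$ is amenable, it notes that $S=\{g:r(u,gu)\ge 1\}$ is a sub-semigroup with $S\cup S^{-1}=G$, so $S$ induces a left-invariant total pre-order on $G$; if $S\cap S^{-1}\ne G$, one obtains a nontrivial left-ordered amenable quotient $G/N$. The decisive external input is then the Witte--Morris theorem that amenable left-orderable groups are locally indicable, which produces the forbidden surjection onto $\ZZ$ from a finitely generated subgroup containing the generators. The order-theoretic packaging absorbs the $0/\infty$ degeneracies automatically --- the very place where your character approach stalls.
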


\begin{iprob*}
Consider the ``dyadic affine'' group $G=\ZZ[1/2] \rtimes 2^\ZZ$ or the lamplighter $G=(\ZZ/2) \wr \ZZ$, both metabelian groups without invariant conditional means.

Does $G$ admits an equivariant conditional mean?
\end{iprob*}

Finally, we record that the existence half of \Cref{ithm:stat:mean} also holds for general ordered vector spaces, provided we restrict to finitely supported random walks:

\begin{ithm}\label{ithm:stat-abstract}
Let $\mu$ be a finitely supported symmetric probability distribution on an amenable group $G$.

Then every order-bounded $G$-representation on any non-singular ordered vector space admits a $\mu$-stationary vector pricing.
\end{ithm}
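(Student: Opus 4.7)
The plan is to parallel the proof of \Cref{ithm:FPC-abstract}, substituting stationarity for invariance at every step. First I would establish the stationary analogue of \Cref{ithm:invariant-VP}: if, for every non-zero $v\in V^+$, the $G$-invariant ideal $V_{Gv}$ admits a non-zero $\mu$-stationary positive linear functional, then $V$ admits a $\mu$-stationary vector pricing. Since $\mu$-stationarity of the slice $r(\,\cdot\,,v)$ is a linear condition on a functional, it is preserved by the same Zorn-style extension, push-forward and gluing operations used to assemble the cocycle $r$ in the proof of \Cref{ithm:invariant-VP}, so that reduction goes through verbatim once the functionals $\phi_v := r(\,\cdot\,,v)$ are chosen $\mu$-stationary.

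It then remains, for each non-zero $v\in V^+$, to produce a non-zero $\mu$-stationary positive linear functional on $V_{Gv}$. Order-boundedness supplies an upper bound $w$ for the orbit $Gv$; consequently $V_{Gv}\subseteq V_w$, and for each $u\in V_{Gv}$ the whole orbit $Gu$ is itself order-bounded by a finite multiple of $w$. Consider
\begin{equation*}
T\phi(u)\ :=\ \phi(\mu * u)\ =\ \sum_g \mu(g)\,\phi(gu),
\end{equation*}
a positive linear operator on the cone of positive linear functionals on $V_{Gv}$; by symmetry of $\mu$, $T\phi$ agrees with the dual convolution $\mu*\phi$. Non-singularity of $V$ supplies a positive functional $\phi_0$ with $\phi_0(v)>0$, which restricts to a non-zero element of this cone. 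The problem is thus reduced to finding a $T$-fixed $\phi \neq 0$ among positive functionals on $V_{Gv}$.

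The main obstacle is to carry out this fixed-point step without losing non-vanishing on $v$. A naive Cesàro average $\tfrac1n\sum_{k<n}T^k\phi_0$ or a Følner-type average over $G$ can collapse to zero, since for a symmetric random walk the distribution $\mu^{*k}$ spreads out and $\phi_0(\mu^{*k}*v)$ tends to $0$; this is the same phenomenon that prevents the unconditional analogue from being trivial, and is precisely the obstruction that motivated the eigenfunctional theorem of~\cite{Monod_eigenform_pre}. Its consequence in our setting is that, with $\mu$ finitely supported and symmetric and $G$ amenable, one may apply it to the positive operator $T$ on the appropriate ordered Banach completion of $V_{Gv}$ (the one induced by the finite-combination seminorm relative to $Gv$, which $T$ contracts) to extract a non-zero positive eigenfunctional of eigenvalue $1$, namely a non-zero positive $T$-fixed $\phi$. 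Combined with the stationary analogue of \Cref{ithm:invariant-VP} from the first paragraph, this $\phi$ yields the desired $\mu$-stationary vector pricing on $V$.
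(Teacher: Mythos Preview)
Your overall strategy matches the paper's: reduce to finding a $\mu$-stationary positive functional on each orbit ideal, then invoke the eigenfunctional theorem of~\cite{Monod_eigenform_pre}, then assemble via the construction behind \Cref{ithm:invariant-VP}. Two points deserve comment.

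First, your ``stationary analogue of \Cref{ithm:invariant-VP}'' is not a separate statement to be proved: \Cref{thm:invariant-VP} is already formulated for \emph{semigroups}, and $\mu$-stationarity is precisely invariance under the (commutative) semigroup generated by the single positive operator $v\mapsto \mu*v$. The paper simply applies \Cref{thm:invariant-VP} to that semigroup; no re-proof or adaptation is needed. Your remark that stationarity is a linear condition preserved by the gluing is correct but unnecessary.

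Second, and this is the real gap: you assert that the eigenfunctional theorem ``extracts a non-zero positive eigenfunctional of eigenvalue~$1$'', but the theorem as stated (\Cref{thm:KR}) only guarantees an eigenvalue $t\leq 1$. Nothing in your sketch explains why $t=1$, and this is exactly where symmetry and amenability enter. The paper's argument (\Cref{cor:KR-for-mu}) is: given the eigenfunctional $J$ with $J(\mu*u)=t\,J(u)$, set $h(g)=J(gv)$; symmetry of $\mu$ yields $\mu*h=t\,h$, whence $\mu^{*n}(e)\leq t^n$ and therefore $t\geq\varrho(\mu)$. Kesten's criterion then gives $\varrho(\mu)=1$ from amenability and symmetry, forcing $t=1$. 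Without this step your functional could be a strict eigenfunctional ($t<1$), which is \emph{not} $\mu$-stationary, and the assembly into a stationary vector pricing would fail. Your mention of a ``Banach completion'' and a contractive seminorm is also not how the paper proceeds; \Cref{thm:KR} applies directly to the non-singular ordered vector space with order-bounded orbits, no completion required.
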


The non-existence half of \Cref{ithm:stat:mean} shows that the amenability assumption cannot be dropped in \Cref{ithm:stat-abstract}.

\subsection{Domains of definition}
The critical reader might question two lifestyle choices that we made from the start. First, we defined conditional means $P(u|v)$ only when $u\leq v$, whereas the conditional probability $P(A|B)\in[0, 1]$ is classically defined for any $A\se X$, not only for $A\se B$. In fact, R{\'e}nyi's original axioms~\cite{Renyi55} are that $P(\cdot|B)$ is a probability measure on $X$ such that
\begin{equation*}
P(A|B \cap C) \, P(B|C) = P(A\cap B |C) \kern3mm\text{for all} \kern3mm A, B, C \text{ with } B\cap C\neq \varnothing.
\end{equation*}
This is easily seen to be equivalent to the formulation we gave with $A\se B\se  C$, compare~\cite[\S2.1]{Csaszar55}. Should we, then, have defined conditional means $P(u|v)$ also for all $u,v\geq 0$\,? Although the corresponding vector pricing $r$ is defined on all of $V^+\times V^+$, it does not serve as a conditional probability; for instance, $r(\one_A, \one_B)$ can take arbitrary values in $[0, +\infty]$.

\smallskip
We prove that for \emph{certain} ordered vector spaces, there is indeed a canonical ($\RR$-valued) extension of the conditional mean $P(u|v)$ to all pairs $u,v\geq 0$, consistent with conditional probabilities. Since the classical case involves intersections $B \cap C$ and $A\cap B$, it is not surprising that the vectorial generalization should be formulated for vector lattices.

\begin{iprop}\label{iprop:hyper-Arch}
Let $P$ be a conditional mean on a hyper-Archimedean vector lattice $V$.

Then $P$ has a canonical extension for which $P(\cdot | v)$ is a positive linear functional defined on the entire space $V$ for every $v\gneqq 0$.

Moreover, $P(\one_A|\one_B)=  P(\one_{A\cap B}|\one_B)$ holds for arbitrary $A,B \neq \varnothing$ when $V$ the space of step-functions on a set.
\end{iprop}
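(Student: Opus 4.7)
The plan is to exploit the defining feature of a hyper-Archimedean vector lattice $V$: for every $v \in V^+$, the principal ideal $V_v$ is a projection band, so we have a direct-sum decomposition $V = V_v \oplus V_v^{\mathrm d}$ with $V_v^{\mathrm d} := \{u \in V : |u| \wedge |v| = 0\}$. Let $\pi_v : V \to V_v$ denote the associated positive linear band projection. By \ref{pt:VP:extends} applied to the vector pricing $r$ corresponding to $P$, the map $u \mapsto r(u, v)$ extends uniquely to a positive linear functional $\lambda_v$ on $V_v$ for every $v \gneqq 0$. I then define
\begin{equation*}
\tilde P(u \mid v) \;:=\; \lambda_v(\pi_v u), \qquad u \in V,\ v \gneqq 0.
\end{equation*}
Positivity and linearity of $\tilde P(\cdot \mid v)$ follow immediately from those of $\pi_v$ and $\lambda_v$, and the extension agrees with $P$ on its original domain because $0 \leq u \leq v$ forces $u \in V_v$ and $\pi_v u = u$.

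To make ``canonical'' precise, I characterise $\tilde P(\cdot \mid v)$ as the unique positive linear extension of $\lambda_v$ to $V$ that vanishes on $V_v^{\mathrm d}$; this is the evident vectorial counterpart of the classical identity $P(A \mid B) = 0$ when $A \cap B = \varnothing$. Equivalently, for $u \in V^+$ one has the intrinsic formula $\tilde P(u \mid v) = \sup_n r(u \wedge nv,\, v)$, which makes no reference to a choice of band projection. The equivalence rests on the fact that in a hyper-Archimedean lattice the sequence $(u \wedge nv)_n$ is eventually constant and equal to $\pi_v u$: a short Riesz-decomposition argument gives $u \wedge nv = \pi_v u \wedge nv$, and since $\pi_v u \in V_v$ satisfies $\pi_v u \leq k v$ for some $k$, the sequence stabilises at $\pi_v u$ for all $n \geq k$.

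For the final clause, the vector lattice $V$ of step-functions on $X$ is hyper-Archimedean since every element has finite range. For any non-empty $B \se X$ the principal ideal $V_{\one_B}$ consists of the step-functions supported on $B$, and the associated band projection is pointwise multiplication by $\one_B$. Hence $\pi_{\one_B}(\one_A) = \one_{A \cap B}$, whence
\begin{equation*}
\tilde P(\one_A \mid \one_B) \;=\; \lambda_{\one_B}(\one_{A \cap B}) \;=\; r(\one_{A \cap B}, \one_B) \;=\; P(\one_{A \cap B} \mid \one_B),
\end{equation*}
as required. The main obstacle I anticipate is conceptual rather than technical: pinning down the appropriate meaning of ``canonical'' and invoking the structure theorem that principal ideals in hyper-Archimedean lattices are projection bands, without which no such canonical extension procedure would be available.
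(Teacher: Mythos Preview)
Your proof is correct and follows essentially the same route as the paper: compose the linear functional $r(\cdot,v)$ on $V_v$ with the band projection $\pi_v$, justify the intrinsic formula $\sup_n r(u\wedge nv,\,v)$ via eventual stabilisation of $(u\wedge nv)_n$, and read off the step-function identity from $\pi_{\one_B}(\one_A)=\one_{A\cap B}$. The only organisational difference is that the paper factors the argument through the notion of \emph{self-majorizing} element (proving the extension for any such $v$ in an arbitrary vector lattice, then quoting Amemiya's result that in a hyper-Archimedean lattice every element is self-majorizing), whereas you invoke directly that principal ideals in hyper-Archimedean lattices are projection bands; these are equivalent structural inputs, and the paper's detour buys a slightly more general intermediate statement (\Cref{prop:self-maj}) at no extra cost.
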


The assumption on $V$ will be explained in \Cref{sec:hyper-Arch}; it does apply to the lattice of step-functions on a set. Therefore, by \Cref{ithm:stat:meas} every group admits a stationary conditional mean also in this extended sense.

\medskip
A second question could be whether a vector pricing $r$ on $V$ can be extended to all vectors, not necessarily positive, upon taking values in $[-\infty, +\infty]$.

Concretely, as soon as the order is generating (i.e.\ $V=V^+-V^+$), one could attempt to extend $r$ in the first variable by linearity and then in the second variable using the symmetry \ref{pt:VP:inv}.

In that case, we should even wonder why an order was needed at all to define $r$: using the axiom of choice, every real vector space admits generating orders.

We show in \Cref{sec:negative} that such extensions are not possible; it appears that order is in order.

\subsection*{Organization}
\Cref{sec:prelim} sets the table with notation and background. \Cref{sec:PPP} introduces partial positive functionals and relates them to vector pricings and conditional means. A vectorial analogue of the R{\'e}nyi order on measures is also introduced. 

The main work starts with \Cref{sec:inv}, which discusses invariance. \Cref{sec:stat} deals with stationarity. \Cref{sec:extend} squares away questions of domains of definition.

\subsection*{Acknowledgments}
I am grateful to Tom Hutchcroft, Omer Tamuz and Tianyi Zheng for showing me the article~\cite{Alhalimi-Hutchcroft-Pan-Tamuz-Zheng_pre} when I was visiting. Later on, chats with Omer were very stimulating.

\newpage
\section{Preliminaries}\label{sec:prelim}
\subsection{Order}\label{sec:order}
\begin{flushright}
\begin{minipage}[t]{0.85\linewidth}\itshape
Two dangers still threaten the world --- order and disorder.
\end{minipage}
\end{flushright}
\begin{flushright}
\begin{minipage}[t]{0.75\linewidth}\small
Paul Val{\'e}ry, \emph{Letters from France: I. The Spiritual Crisis}\\
The Athen{\ae}um (London), no. 4641, 11 April 1919, p. 184
\end{minipage}
\end{flushright}
An \textbf{ordered vector space} is a real vector space $V$ endowed with a (partial) order that is compatible with addition and with multiplication by positive scalars. This defines a positive cone $V^+\se V$, recalling that a \textbf{cone} is any subset $C\se V$ stable under addition and under multiplication by non-negative scalars. (This is sometimes called a convex cone.) Moreover, the cone $C=V^+$ is \textbf{proper}, i.e.\ $C \cap (- C) = \{0\}$. Conversely, every proper cone $C\se V$ defines an order on $V$ by $u\leq v \Leftrightarrow v-u\in C$.

\medskip
A linear map $T\colon V \to W$ between ordered vector spaces is \textbf{positive} if $T(V^+) \se W^+$. In the particular case $W=\RR$, this defines positive linear functionals, which form the \textbf{dual cone} in the algebraic dual $V^*$ of $V$. If the order on $V$ is generating, then this dual cone is proper so that $V^*$ is an ordered vector space too; here \textbf{generating} means $V=\Span(V^+)$ or equivalently $V=V^+ - V^+$.

\medskip
A \textbf{representation} of a group $G$ on an ordered vector space $V$ refers to an action by positive linear maps; we also simply say that $G$ \textbf{acts} on $V$ when the context is clear. Occasionally we consider the more general setting where $G$ is a monoid; in that generality, we should remember that the positivity condition only requires $g V^+ \se V^+$ for all $g\in G$, not necessarily $g V^+ =V^+$.

We also note that the dual of a representation is strictly speaking a representation of the opposite monoid on the dual space, or equivalently a right action. We shall however use abusive language and speak of dual $G$-actions; for groups, this is of course the same as the usual action by the inverse element.

\medskip
An \textbf{ideal} $W\se V$ in the ordered vector space $V$ is a vector subspace such that
\begin{equation*}
\forall w, w'\in W\ \forall v\in V:\kern3mm w\leq v\leq w' \Longrightarrow v\in W.
\end{equation*}
Let $A\se V$ be any subset. The \textbf{ideal generated by $A$} is the intersection $V_A$ of all ideals of $V$ containing $A$. When $A=\{v\}$, the simpler notation $V_v$ is used. We record a few basic facts that will be used throughout.

\begin{lem}\label{lem:ideal}
Let $V$ be an ordered vector space and $A\se V$ any subset.
\begin{enumerate}[label=\normalfont(\roman*)]
\item $\displaystyle V_A = \big\{ v\in V : \exists x,y\in\Span(A) \text{ with } x\leq v \leq y \big\}$.\label{pt:ideal:explicit}
\item If $A\se V^+$, then\label{pt:ideal:sums}
\begin{equation*}
V_A = \big\{ v\in V : \exists a_1, \ldots, a_n\in A \text{ with } \pm v \leq a_1 + \cdots + a_n \big\}.
\end{equation*}
\item If $A\se V^+$, then the induced order on the ideal $V_A$ is generating. In particular, the dual positive cone $(V_A^*)^+$ in the algebraic dual $V_A^*$ is proper. \label{pt:ideal:generate}
\end{enumerate}
\end{lem}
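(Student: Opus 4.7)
My plan is to handle the three parts in order, using each to bootstrap the next.

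For \ref{pt:ideal:explicit}, I would denote the right-hand side by $W$ and verify two things: that $W$ is itself an ideal containing $A$, and that $W$ is contained in every ideal that contains $A$. The first is a routine bookkeeping: $W\supseteq\Span(A)$ (take $x=y=v$), $W$ is a vector subspace (adding bounds, and for scalar multiplication by $t<0$ swapping the roles of $x$ and $y$), and $W$ is order-closed (if $w_1\le v\le w_2$ with $w_i\in W$, chain the bounds $x_1\le w_1\le v\le w_2\le y_2$ from $\Span(A)$). The second direction is immediate from the fact that an ideal containing $A$ must contain $\Span(A)$ and is then closed under taking elements squeezed between two of its members. This characterization will then be the workhorse for the rest.

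For \ref{pt:ideal:sums}, I would use \ref{pt:ideal:explicit} to reduce to showing that a vector $v$ sits between two elements of $\Span(A)$ if and only if some finite sum of elements of $A$ dominates both $v$ and $-v$. The ``if'' direction is immediate since $\pm(a_1+\cdots+a_n)\in\Span(A)$ gives the required bounds. For the ``only if'' direction, write $v$ as bounded by $\sum c_i a_i$ and $\sum d_j b_j$ with $a_i,b_j\in A$; since $A\se V^+$, each $a_i,b_j$ is positive, and I can coarsen by replacing each real coefficient by a larger non-negative integer (using $t\le \lceil t \rceil$ with $t\ge 0$) and then writing that integer multiple as a repeated sum. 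The positivity of the $a_i$ is exactly what makes this monotonicity valid, so the whole step hinges on the hypothesis $A\se V^+$.

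For \ref{pt:ideal:generate}, I would apply \ref{pt:ideal:sums}: given $v\in V_A$, pick $a_1,\ldots,a_n\in A$ with $\pm v\le s:=a_1+\cdots+a_n$. Then $s\pm v\in V_A^+$ (both lie in $V_A$, being in the subspace $V_A$, and both are $\ge 0$), and $v=\tfrac12\bigl((s+v)-(s-v)\bigr)$ realises $v$ as a difference of two elements of $V_A^+$. This shows the induced order on $V_A$ is generating. The ``in particular'' clause is then a soft general fact: if $V_A=V_A^+-V_A^+$ and $\varphi\in V_A^*$ satisfies $\pm\varphi\ge 0$, then $\varphi$ vanishes on $V_A^+$, hence on $V_A^+-V_A^+=V_A$, so $\varphi=0$; equivalently, the dual cone $(V_A^*)^+$ is proper.

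I do not foresee a genuine obstacle; the only place that requires a second thought is the reduction from real to integer coefficients in \ref{pt:ideal:sums}, and this is a two-line manoeuvre provided one remembers that monotonicity of scalar multiplication on $V^+$ with respect to a non-negative scalar is what licences the step.
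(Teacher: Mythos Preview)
Your proposal is correct and follows essentially the same route as the paper: verify in~\ref{pt:ideal:explicit} that the displayed set is itself an ideal containing $A$ and lies in any such ideal; for~\ref{pt:ideal:sums} pass from real linear combinations in $\Span(A)$ to genuine finite sums by discarding or rounding coefficients (the paper phrases this as ``drop the negatively signed terms from $y$ and the positively signed terms from $x$, then take $y-x$''); and for~\ref{pt:ideal:generate} split $v$ using the dominating sum $s$ (the paper writes $v=(v+a)-a$ rather than your $\tfrac12\bigl((s+v)-(s-v)\bigr)$, but this is cosmetic). The only point worth tightening in your write-up of~\ref{pt:ideal:sums} is to say explicitly how you merge the two separately coarsened bounds into a single sum dominating both $v$ and $-v$, e.g.\ by adding them.
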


For instance, if $V=\RR^X$ is the space of all functions on some set $X$ and $v=\one_X$ is the constant function one, then $V_v=\ell^\infty(X)$.

\begin{proof}[Proof of \Cref{lem:ideal}]
For \ref{pt:ideal:explicit}, any ideal containing $A$ must contain $\Span(A)$ and hence also the right hand side. For the reverse inclusion, it suffices to observe that the right hand side is an ideal; it follows that it contains $V_A$.

We now assume $A\se V^+$. For \ref{pt:ideal:sums}, let $v\in V_A$ and consider $x,y\in\Span(A)$ as in~\ref{pt:ideal:explicit}. Drop from the linear combination $y$ all terms afflicted by a negative coefficient; we still have $v\leq y$ for this new $y$. Likewise, drop positive coefficient terms from $x$. With these new $x,y$, we obtain $a_1 + \cdots + a_n$ from $y-x$.

For \ref{pt:ideal:generate}, let $v\in V_A$. By~\ref{pt:ideal:sums}, we can choose $a_1, \ldots, a_n\in A$ with $- v \leq a$, where $a=a_1 + \cdots + a_n$. Now $a\in V_A^+$ and $v+a\geq 0$ is also in $V_A^+$, witnessing $V_A = V_A^+ - V_A^+$. The additional statement holds by definition of the dual cone.
\end{proof}

We call $V$ \textbf{singular} if there is $v\gneqq 0$ such that $J(v)=0$ for every positive linear functional $J$ on $V$. This does not happen in most familiar spaces (compare \Cref{sec:topol} below), but there are extreme examples.

\begin{exam}\label{exam:singular}
Let $V=\RR[X]$ be the vector space of real polynomials. This is an ordered vector space for the ``administrative lexicographic'' order, defined by the sign of the leading coefficient. (It is like the lexicographic order for unbounded information, except that the most important bits come last.) We claim that $V$ admits no non-zero positive linear functional at all.

Indeed, if $J$ is such a functional, then there must be $n\geq 0$ with $J(X^n) >0$. In particular, there is $\epsilon>0$ with $J(X^n) > \epsilon J(X^{n+1})$. Now the polynomial $\epsilon X^{n+1} - X^n$ is positive (administratively) but $J(\epsilon X^{n+1} - X^n)<0$, a contradiction.

(More complicated examples are given in~\cite[\S 85]{ZaanenII}.)
\end{exam}

For much of the general discussion of vector pricings and conditional means, we do not impose any assumption on the ordered vector space. It turns out that even in the singular case, \emph{partially defined} positive functionals are plentiful and suffice for most of our constructions.

\subsection{The definition of vector pricings}
We now justify the basic properties of vector pricings as presented in the Introduction. Recall that a vector pricing $r\colon V^+ \times V^+ \to [0, +\infty]$ on an arbitrary ordered vector space $V$ is defined to satisfy:

\smallskip
\begin{enumerate}[left=2\parindent, labelsep=\parindent, topsep=3pt, itemsep=3pt, label=\normalfont(VP\arabic*)]
\item $r(u+v, w) = r(u, w) + r(v, w)$ when $w\neq 0$.
\item $ r(u,w) = r(u, v) \, r(v, w)$ when defined.
\item $r(u,u)=1$.
\end{enumerate}

We claimed the following additional properties.

\begin{lem}\label{lem:VP:properties}
For all $u,v,w\in V^+$:
\begin{enumerate}[resume*]
\item $u\leq v$ implies $r(u, w) \leq r(v, w)$.
\item  $r(\cdot, v)$ determines a (unique) positive linear functional on the ideal $V_v \se V$ (for $v\neq 0$).
\item $r(u, v) = 1/ r(v, u)$.
\item If $v\neq 0$ then $r(0, v) = 0$ and $r(v, 0) = +\infty$.
\item $r(t u,v) = t\, r(u,v)$ for all $t\in \RR_{>0}$.
\end{enumerate}
\end{lem}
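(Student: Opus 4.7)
The plan is to establish the seven properties in an order that lets each one use the earlier ones, with (VP3) as the rigidifying ingredient throughout.

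First I would prove the first half of (VP7): applying (VP1) with summands $0$ and $v$ and conditioning argument $w = v \neq 0$ yields $r(v,v) = r(0,v) + r(v,v)$, and since $r(v,v)=1$ by (VP3) is finite, this forces $r(0,v)=0$. Next, (VP4) in the case $w\neq 0$ is immediate from (VP1): writing $v = u + (v-u)$ with $v-u\in V^+$ gives $r(v,w) = r(u,w) + r(v-u,w) \geq r(u,w)$. For (VP8) with $v\neq 0$, iterating (VP1) yields $r(nu,v) = n\,r(u,v)$ for positive integers $n$, and substituting $u/n$ extends this to positive rationals. For an arbitrary real $t>0$, sandwiching $t$ between rational approximants $q_1<t<q_2$ and applying the monotonicity just obtained gives $q_1\, r(u,v) \leq r(tu,v) \leq q_2\, r(u,v)$; passing to the limit yields $r(tu,v) = t\,r(u,v)$, in all three cases where $r(u,v)$ is zero, finite positive, or $+\infty$.

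Now (VP5) becomes accessible. Fixing $v \neq 0$, \Cref{lem:ideal} shows that each $x\in V_v^+$ satisfies $x\leq n v$ for some $n \in \NN$, so by monotonicity $r(x,v) \leq r(nv,v) = n$; thus $r(\cdot,v)$ is finite-valued and additive on $V_v^+$. Since $V_v = V_v^+ - V_v^+$ (again by \Cref{lem:ideal}), for any $x\in V_v$ I would choose a decomposition $x = a - b$ with $a,b\in V_v^+$ and set $\varphi(x) := r(a,v) - r(b,v)$. Well-definedness follows from applying (VP1) to the identity $a_1 + b_2 = a_2 + b_1$ coming from two such decompositions; additivity is then immediate and homogeneity comes from (VP8); positivity is clear from $\varphi(x) = r(x,v)$ on $V_v^+$, and uniqueness follows since $V_v$ is spanned by $V_v^+$.

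For (VP6), I would apply (VP2) to the triple $(u,v,u)$ with $u,v\neq 0$: the identity $r(u,v)\, r(v,u) = r(u,u) = 1$ holds whenever the product is defined. A short case analysis settles everything. If $r(v,u)\in(0,+\infty)$, then $r(u,v) = 1/r(v,u)$. If $r(v,u) = 0$ and $r(u,v)$ were finite, the product would be $0 \neq 1$, so $r(u,v) = +\infty$. Symmetrically, $r(v,u) = +\infty$ forces $r(u,v) = 0$. Under the conventions $1/0 = +\infty$ and $1/{+\infty} = 0$ this is exactly (VP6). The second half of (VP7) drops out by the same technique applied to the triple $(0,v,0)$ with $v\neq 0$: (VP2) would demand $1 = r(0,0) = 0 \cdot r(v,0)$ whenever defined, which is impossible for finite $r(v,0)$, so the product must be undefined and hence $r(v,0) = +\infty$. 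This also finishes (VP4) in the remaining case $w=0$: the case $v=0$ forces $u=0$ via properness of $V^+$, while $v\neq 0$ makes the right-hand side $+\infty$ and dominates.

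The main obstacle is the case-analysis inside (VP6), where the ``when defined'' clause in (VP2) must be navigated carefully. The underlying principle is that (VP3) acts as a rigidity constraint: the equality $r(u,u)=1$ is incompatible with every pair of values for $r(u,v)$ and $r(v,u)$ except those satisfying (VP6) under the natural conventions at $0$ and $+\infty$. Once that case analysis is in hand, all other properties assemble without further subtlety.
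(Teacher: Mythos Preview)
Your proof is correct and follows essentially the same route as the paper's, with a small organizational difference worth noting.

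The paper handles (VP5) and (VP8) together: it introduces the larger ideal $\finite(v)=\{u\in V^+:r(u,v)<+\infty\}-\{u\in V^+:r(u,v)<+\infty\}$, observes that $r(\cdot,v)$ is a finite-valued positive additive map on the cone $\finite^+(v)$, and then cites an external lemma (Zaanen or Aliprantis--Tourky) asserting that any such map extends uniquely to a positive \emph{linear} functional on $\finite(v)$. Real homogeneity (VP8) then falls out of this linearity, and (VP5) follows from $V_v\subseteq\finite(v)$. You instead prove (VP8) first, by hand, via rational homogeneity plus monotonicity and a squeeze argument, and only afterwards construct the linear extension on $V_v$ explicitly, using your already-established (VP8) for homogeneity. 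Your route is slightly more elementary and self-contained; the paper's is a touch slicker at the cost of an external citation. The treatment of (VP4), (VP6) and (VP7) is effectively identical in both arguments.
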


\begin{proof}
\ref{pt:VP:monotone} follows from~\ref{pt:VP:add} since $r(v, w) = r(u, w) + r(v-u, w) \geq  r(u, w)$. \ref{pt:VP:inv} follows from~\ref{pt:VP:cocycle} upon considering separately the values $0, +\infty$; then~\ref{pt:VP:0} follows too, noting that~\ref{pt:VP:add} grants $r(0, v) = 0$.

For the remaining two points \ref{pt:VP:extends} and \ref{pt:VP:hom}, fix $v\neq 0$. Consider the sets
\begin{align*}
\finite^+(v) &= \{ u\in V^+ : r(u, v) \neq +\infty \}\kern3mm\text{and}\\
\finite(v) &= \finite^+(v) - \finite^+(v) \se V.
\end{align*}
By~\ref{pt:VP:add}, $\finite^+(v)$ is closed under addition and by~\ref{pt:VP:monotone} it is closed under multiplication by $0\leq t \leq 1$. Therefore $\finite^+(v)$ is a cone and $\finite(v)$ is a vector subspace. Moreover, $(\finite(v))^+= \finite^+(v)$; indeed, if $u-u'\geq 0$ with $u, u'\in \finite^+(v)$ then~\ref{pt:VP:monotone} implies $r(u-u', v) \leq r(u,v) < +\infty$ so that $u-u'\in \finite^+(v)$. We note in passing that $\finite(v)$ is an ideal because $\finite^+(v)$ is closed for order-intervals.

Now that we know that $\finite^+(v)$ is the positive cone of the space $\finite(v)$ that it spans, it follows that the (finite-valued) positive additive map $r(\cdot, v)$ on $\finite^+(v)$ has a unique extension to $\finite(v)$ as a positive linear functional, see e.g.~\cite[Lemma~83.1]{ZaanenII} or~\cite[Lemma~1.26]{Aliprantis-Tourky}. Thus~\ref{pt:VP:extends} holds because $V_v \se \finite(v)$ by~\ref{pt:VP:monotone}. At this point we have also obtained~\ref{pt:VP:hom} since the latter is trivial for $u\notin \finite^+(v)$.
\end{proof}

In the definition of vector pricings, we could as well avoid the special case $r(u,0)$, just as $P(A|\varnothing)$ is not defined for conditional probabilities. Indeed, a straightforward verification shows the following.

\begin{lem}\label{lem:no-0}
If a map $r$ defined on $V^+ \times ( V^+ \smallsetminus \{0\})$ satisfies the conditions for vector pricings on its domain of definition, then it extends (uniquely) to a vector pricing by setting $r(0,0)=1$ and $r(u, 0)=+\infty$ when $u\neq 0$.\qed
\end{lem}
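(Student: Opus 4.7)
The plan is to carry out the verification promised by the statement: check that each of the three vector-pricing axioms survives the prescribed extension, then read uniqueness off \Cref{lem:VP:properties}.

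Axiom \ref{pt:VP:add} only constrains triples $(u,v,w)$ with $w\neq 0$, so the extension is irrelevant to it. Axiom \ref{pt:VP:one} at $u=0$ is arranged by the very choice $r(0,0)=1$ and for $u\neq 0$ is unchanged. Hence the actual content lies in verifying the cocycle relation \ref{pt:VP:cocycle} in the new cases. Here I would first note that $r(0,v)=0$ already follows for $v\neq 0$ from $(r(0,v)+1)=r(0+v,v)=r(v,v)=1$, using only \ref{pt:VP:add} and \ref{pt:VP:one} on the original domain. With that in hand I would split into the three new configurations. If $v=w=0$, both sides equal $r(u,0)$. If $v\neq 0$ and $w=0$, then the right-hand side is $r(u,v)\cdot \infty$; when $u=0$ or $r(u,v)=0$ this product is $0\cdot\infty$ and the relation is vacuous, while when $u\neq 0$ and $r(u,v)>0$ both sides equal $+\infty$. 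If $v=0$ and $w\neq 0$, then the right-hand side is $r(u,0)\cdot 0$; when $u\neq 0$ this is $\infty\cdot 0$ and the relation is vacuous, while when $u=0$ both sides are $0$ thanks to $r(0,w)=0$.

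For uniqueness, any extension to a vector pricing must obey \ref{pt:VP:one}, which forces $r(0,0)=1$, and by property \ref{pt:VP:inv} of \Cref{lem:VP:properties} it must obey $r(v,0)=1/r(0,v)=1/0=+\infty$ for every $v\neq 0$. Hence the two values prescribed in the statement are the only admissible ones.

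No substantive obstacle is expected; the only delicate point is remembering that the cocycle identity is asserted only when $r(u,v)\,r(v,w)$ avoids the forbidden product $0\cdot\infty$, so that several of the boundary cases are vacuously satisfied rather than in need of a genuine identity.
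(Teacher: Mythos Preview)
Your verification is correct and is precisely the ``straightforward verification'' the paper alludes to but omits (the lemma is stated with a \qed and no proof). Your case split on which of $v,w$ vanishes, together with the preliminary observation $r(0,v)=0$ for $v\neq 0$, handles \ref{pt:VP:cocycle} cleanly, and your uniqueness argument via \ref{pt:VP:one} and \ref{pt:VP:inv} is exactly right.
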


\subsection{Topology}\label{sec:topol}
In some familiar cases, a vector space is both ordered and a topological vector space. In that setting, an \textbf{ordered topological vector space} refers to the situation where the order is closed, or equivalently the positive cone is closed. We warn the reader that although $\ell^\infty(X)$ is even a Banach lattice, we shall mostly consider it as an abstract ordered vector space because the vector pricings and conditional means will generally not be continuous.

If the topology is locally convex and Hausdorff, which includes the most familiar examples, then the order is \textbf{regular}~\cite[V.4.1]{Schaefer}. This means that the order dual separates points and that the order is \textbf{Archimedean}, i.e.\ the set $\{n v:n\in \NN\}$ does not admit an upper bound, unless $v\leq 0$. In particular, all these ordered topological vector spaces are non-singular.

\medskip
In the topological setting, there are many important examples of  weakly complete cones, although it is rare for the topological vector space itself to be weakly complete. The motivating illustration of this is the space of all (regular) measures on a locally compact space, endowed with the vague topology~\cite[III \S\,1 No.\,9]{BourbakiINT14}. (Also, Banach spaces are almost never weakly complete.) A notable exception where the entire space is weakly complete is as follows.

\begin{lem}\label{lem:dual-complete}
The algebraic dual of a vector space is complete and weakly complete in the weak-* topology. So is the dual cone in the algebraic dual of an ordered vector space.
\end{lem}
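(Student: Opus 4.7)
The plan is to realize the algebraic dual $V^*$ as a closed subset of $\RR^V$ equipped with the product topology, and then to invoke the standard fact that a closed subset of a complete uniform space is complete. Indeed, the weak-* topology on $V^*$ is by construction the topology of pointwise convergence on $V$, which is exactly the subspace topology coming from the natural injection $V^* \hookrightarrow \RR^V$. Since $\RR$ is complete and arbitrary products of complete uniform spaces are complete for the product uniformity, $\RR^V$ is complete.

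It therefore suffices to check that $V^*$ is closed in $\RR^V$. For each quadruple $u,v\in V$ and $s,t\in\RR$, the condition $\phi(s u + t v) = s\phi(u) + t\phi(v)$ defines a closed subset of $\RR^V$, being the preimage of $\{0\}$ under a continuous $\RR$-linear combination of the coordinate projections at $s u + t v$, $u$, and $v$. Intersecting all such conditions yields $V^*$, which is therefore closed and hence complete in the weak-* topology. For the ``weakly complete'' half of the statement, I would observe that the topological dual of $(V^*, \text{weak-*})$ is canonically $V$, so the weak topology on $V^*$ is the weak-* topology itself, and no additional argument is required.

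For the dual cone $(V^*)^+$, when $V$ is ordered, this set is defined by the conditions $\phi(v)\geq 0$ for $v\in V^+$; each such condition is the preimage of $[0,+\infty)$ under the continuous evaluation map $\phi\mapsto\phi(v)$ on $\RR^V$, hence closed. Thus $(V^*)^+$ is closed in $\RR^V$, so it is closed in $V^*$ and complete. No real obstacle is anticipated: the whole argument is a routine application of the observation that linearity and positivity are pointwise, hence closed, conditions in the product topology.
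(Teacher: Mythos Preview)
Your proposal is correct and follows essentially the same route as the paper: both identify the weak-* topology on $V^*$ with a product-of-$\RR$ topology, note that the weak topology coincides with the weak-* topology, and handle the dual cone by observing that positivity is a weak-* closed condition. The only cosmetic difference is that the paper chooses a Hamel basis $B$ of $V$ to obtain a topological vector space isomorphism $V^* \cong \RR^B$ directly, whereas you embed $V^*$ as a closed linear subspace of the larger product $\RR^V$; both arguments are standard and equivalent in strength.
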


\begin{proof}
The weak-* topology is already weak and it is complete because the algebraic dual is isomorphic, as topological vector space, to the product of copies of $\RR$ indexed by a basis of the initial space. See e.g.~\cite[II \S\,6 No.\,6--7]{BourbakiEVT81}. The second statement follows since positivity is a weak-* closed condition in the dual.
\end{proof}

Following~\cite{Monod_cones}, a group $G$ has the \textbf{fixed-point property for cones} if it has a non-zero fixed point in the positive (proper) cone for any representation on any ordered topological vector space under the following assumptions. The cone is supposed to be weakly complete in a locally convex Hausdorff topological vector space $E$. The action on the cone is supposed to be ``locally bounded'', meaning that it admits a non-zero orbit that is bounded in the sense of topological vector spaces. It is moreover supposed to be ``cobounded'' in the sense that there is a linear form $\varphi$ in the topological dual $E'$ such that any other $\lambda\in E'$ satisfies $\pm \lambda \leq \sum_{i=1}^n g_i\, \varphi$ for some $g_i\in G$ depending on $\lambda$.

This is motivated by the case of measures on a locally compact $G$-space, where coboundedness becomes equivalent to the cocompactness of the $G$-action on the underlying space. For very interesting applications to non-commutative analogues of measures, see~\cite{Rordam2019}.

\section{Pricings vs means vs partial functionals}\label{sec:PPP}
In this section, we establish dictionaries between three languages for the results of this article.
\subsection{Conditional means}\label{sec:CM}
\begin{flushright}
\begin{minipage}[t]{0.85\linewidth}\itshape
One of the common denominators I have found is that expectations rise above that which is expected.
\end{minipage}
\end{flushright}
\begin{flushright}
\begin{minipage}[t]{0.75\linewidth}\small
G.W. Bush, Los Angeles, 27 September 2000\\
(c.f. Congressional Record, Vol.~146 (2000), Part 17)
\end{minipage}
\end{flushright}
We extend the definition of conditional means from $\ell^\infty$ to the general case. Let $V$ be an ordered vector space. A \textbf{conditional mean} on $V$ is a function $P$ with values in $[0,1]$ defined on all pairs of vectors $0\leq u\leq v \neq 0$ such that the following hold.
\begin{enumerate}[left=2\parindent, labelsep=\parindent, topsep=3pt, itemsep=3pt, label=\normalfont(CM\arabic*)]
\item $P(u+v | w) = P(u| w)+P(v | w)$ when $u+v\leq w\neq 0$ with $u,v\geq 0$. \label{pt:CM:add}
\item $P(u|w) = P(u|v)\, P(v|w)$ when $0\leq u \leq v \leq w$ with $v\neq 0$.\label{pt:CM:trans}
\item $P(u|u)=1$ when $u\gneqq 0$.\label{pt:CM:one}
\end{enumerate}

\noindent
The normalization \ref{pt:CM:one} actually follows from \ref{pt:CM:trans} as soon as $P(\cdot| v)$ is not identically zero.

\medskip
Given a vector pricing $r$ on $V$, we obtain a conditional mean $P_r$ by setting $P_r(u|v) = r(u,v)$; this value is in $[0, 1]$ by \ref{pt:VP:one} and \ref{pt:VP:monotone}. This process identifies the two concepts in the following sense.

\begin{prop}\label{prop:VP:CM:equiv}
Let $P$ be a conditional mean on the ordered vector space $V$. Then the expression
\begin{equation*}
r_P(u,v) = \frac{P(u|u+v)}{P(v|u+v)}\kern3mm (u,v\in V^+, v\neq 0)
\end{equation*}
defines a vector pricing on $V$.

Moreover, the assignments $r\mapsto P_r$ and $P\mapsto r_P$ are mutually inverse.
\end{prop}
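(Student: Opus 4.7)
The plan is to verify the three vector pricing axioms for $r_P$ and then establish the two mutual inversion identities. For well-definedness, I first observe that \ref{pt:CM:add} and \ref{pt:CM:one} together give $P(u|u+v) + P(v|u+v) = P(u+v|u+v) = 1$ whenever $u+v \neq 0$, so the quotient defining $r_P(u,v)$ lies in $[0,+\infty]$ with no $0/0$ pathology; I extend across $v=0$ via \Cref{lem:no-0}. Axiom \ref{pt:VP:one} is then immediate, since \ref{pt:CM:add} yields $P(u|2u)=1/2$.

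For \ref{pt:VP:add} and \ref{pt:VP:cocycle}, the key device is to reduce every pairing appearing in the identity to a single common conditioning $\sigma := u+v+w$. Whenever $P(x+y|\sigma)>0$, \ref{pt:CM:trans} applied to the chain $x \leq x+y \leq \sigma$ (and similarly for $y$) gives $P(x|x+y) = P(x|\sigma)/P(x+y|\sigma)$, hence
\[
r_P(x,y) \;=\; \frac{P(x|\sigma)}{P(y|\sigma)}
\]
in extended $[0,+\infty]$ arithmetic (the two terms are not simultaneously zero because their sum is $P(x+y|\sigma)>0$). Under this reduction \ref{pt:VP:add} collapses to $(P(u|\sigma)+P(v|\sigma))/P(w|\sigma) = P(u|\sigma)/P(w|\sigma) + P(v|\sigma)/P(w|\sigma)$, an instance of \ref{pt:CM:add}, while \ref{pt:VP:cocycle} becomes the telescoping $[P(u|\sigma)/P(v|\sigma)]\cdot[P(v|\sigma)/P(w|\sigma)] = P(u|\sigma)/P(w|\sigma)$.

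The main obstacle will be the boundary cases where some intermediate $P(x+y|\sigma)$ vanishes, blocking the reduction. I handle these by direct case analysis using the normalization $P(u|\sigma)+P(v|\sigma)+P(w|\sigma) = 1$: for instance, $P(u+w|\sigma)=0$ forces $P(u|\sigma)=P(w|\sigma)=0$ and hence $P(v|\sigma)=1$, from which $r_P(v,w)=+\infty$ (the reduction still applies since $P(v+w|\sigma)=1$); this propagates $+\infty$ through the sum in \ref{pt:VP:add}, while the ``when defined'' clause of \ref{pt:VP:cocycle} absorbs the remaining $0\cdot\infty$ situations. Trivial degenerate cases (such as $u=0$ or $v=0$) follow from $P(0|w)=2P(0|w)=0$ via \ref{pt:CM:add}.

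Finally, for mutual inversion: given a vector pricing $r$, $r_{P_r}(u,v) = r(u,u+v)/r(v,u+v)$, and \ref{pt:VP:cocycle} applied to $u \leq v \leq u+v$ together with \ref{pt:VP:inv} recovers $r(u,v)$; the degenerate case $r(v,u+v)=0$ is resolved by combining \ref{pt:VP:add} and \ref{pt:VP:inv} to check that $r(u,v)=+\infty$ on both sides. Conversely, given $P$, for $0 \leq u \leq v \neq 0$ one has $P_{r_P}(u|v) = P(u|u+v)/P(v|u+v)$, and \ref{pt:CM:trans} applied to $u \leq v \leq u+v$ gives $P(u|u+v) = P(u|v)\,P(v|u+v)$; the denominator is automatically nonzero (otherwise \ref{pt:CM:add} and \ref{pt:CM:one} force $P(u|u+v)=1$ while \ref{pt:CM:trans} forces $P(u|u+v)=0$), yielding $P_{r_P}(u|v) = P(u|v)$ as required.
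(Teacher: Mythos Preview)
Your proof is correct and follows essentially the same strategy as the paper: both reduce the pairings in \ref{pt:VP:add} and \ref{pt:VP:cocycle} to a common conditioning $\sigma = u+v+w$ via \ref{pt:CM:trans}, then handle the boundary cases where some $P(x+y|\sigma)$ vanishes by case analysis using the normalization $P(u|\sigma)+P(v|\sigma)+P(w|\sigma)=1$. Your treatment of the mutual inversion is actually more explicit than the paper's, which dismisses it as ``a direct verification''; your observation that $P(v|u+v)=0$ is impossible for $0\le u\le v\ne 0$ (via the clash between \ref{pt:CM:add}/\ref{pt:CM:one} and \ref{pt:CM:trans}) is exactly the needed point.
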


\begin{proof}
We only consider $r_P(u,v)$ when $v\neq 0$, see \Cref{lem:no-0}. Since $P(\cdot| u+v)$ is additive with $P(u+v| u+v)=1$, it cannot happen that both numerator and denominator vanish. Thus, $r_P(u,v)$ is well-defined in $[0, +\infty]$.

Observe that whenever $x\geq u+v$ is such that $P(u+v|x) \neq 0$, \ref{pt:CM:trans} implies
\begin{equation*}
\frac{P(u|x)}{P(v|x)} = \frac{P(u|u+v) P(u+v|x)}{P(v|u+v) P(u+v|x)} = \frac{P(u|u+v)}{P(v|u+v)} =r_P(u,v).
\end{equation*}
In order to check \ref{pt:VP:add}, consider $u,v,w\in V^+$ with $w\neq 0$. If both
\begin{equation*}
P(u+w|u+v+w) \neq 0 \kern3mm \text{and} \kern3mm  P(v+w|u+v+w)\neq 0,
\end{equation*}
then the above observation with $x=u+v+w$ allows us to verify
\begin{multline*}
r_P(u, w) + r_P(v, w) = \frac{P(u|u+v+w)}{P(w|u+v+w)} +  \frac{P(v|u+v+w)}{P(w|u+v+w)} =  \\
= \frac{P(u+v|u+v+w)}{P(w|u+v+w)} = r_P(u+v, w).
\end{multline*}
Otherwise, without loss of generality $P(u+w|u+v+w)=0$ which implies $P(w|u+v+w)=0$ and thus $r_P(u+v, w)=+\infty$. On the other hand, $P(u+w|u+v+w)=0$ also implies $P(v|u+v+w)=1$ and hence also $P(v+w|u+v+w)=1$. Combining this with \ref{pt:CM:trans} applied as
\begin{equation*}
P(w|v+w) \, P(v+w|u+v+w) = P(w|u+v+w)=0
\end{equation*}
implies $P(w|v+w)=0$ and hence $r_P(v, w)=+\infty$, so that \ref{pt:VP:add} holds.

The verification of \ref{pt:VP:cocycle}, namely
\begin{equation*}
\frac{P(u|u+v)}{P(v|u+v)} \, \frac{P(v|v+w)}{P(w|v+w)} =\frac{P(u|u+w)}{P(w|u+w)},
\end{equation*}
is entirely similar, considering again $x=u+v+w$. Namely, if none of the pairwise sums of $u$, $v$ and $w$ are zeros of $P(\cdot|u+v+w)$, then the identity follows from the above observation. Otherwise, two among $u$, $v$ and $w$ are zeros of $P(\cdot|u+v+w)$. If this is the case of $u$  and $v$, then $P(w|u+v+w)=1$, which implies $P(u+w|u+v+w)=1$ and hence using \ref{pt:CM:trans} for
\begin{equation*}
P(u|u+w) \, P(u+w|u+v+w) = P(u|u+v+w)=0
\end{equation*}
we deduce $P(u|u+w)=0$. Likewise, $P(v|v+w)=0$; now the desired relation reads $0=0$. 

In case $v$ and $w$ are zeros of $P(\cdot|u+v+w)$, we deduce in the same way that the relation becomes $+\infty=+\infty$ and finally in case $u$ and $w$ are zeros of $P(\cdot|u+v+w)$, the left hand side is the undefined form $0\cdot+\infty$.

\ref{pt:VP:one} holds trivially and the remaining statement about $r\mapsto P_r$ and $P\mapsto r_P$ is a direct verification.
\end{proof}

\subsection{Partial functionals}
Let $V$ be an ordered vector space.

\begin{defi}
A \textbf{positive partial functional} on $V$ is a pair $(U, J)$ where $U\se V$ is an ideal and $J\colon U \to \RR$ is a positive linear functional that is not identically zero on $U^+$.
\end{defi}

For instance, a vector pricing $r$ gives rise to a family of positive partial functionals, namely $(V_v, r(\cdot, v))$ as $v$ ranges over all non-zero positive vectors and $r(\cdot, v)$ is extended to $V_v$ by \ref{pt:VP:extends}.

Our next goal, conversely, is to construct a vector pricing from suitable collections of positive partial functionals. This requires of course a sufficiently large collection to ensure at least that every positive vector is non-trivial for some $(U,J)$. We formalize this as follows.

\begin{defi}
A collection $\sC$ of positive partial functionals on $V$ is \textbf{full} if for every $v\gneqq 0$ in $V$ there is $(U, J)\in \sC$ with $v\in U$ and $J(v)\neq 0$.
\end{defi}

More importantly, this converse construction requires careful choices amongst this supply of functionals to ensure the multiplicative condition~\ref{pt:VP:cocycle}. This will be achieved using a generalization of the R{\'e}nyi order on measures~\cite{Renyi56}, as we now propose. Given two positive partial functionals $(U_1, J_1)$ and $(U_2, J_2)$, we define
\begin{equation*}
(U_1, J_1) \prec (U_2, J_2) \ \overset{\text{def.}}{\Longleftrightarrow}\ U_1 \se \ker J_2.
\end{equation*}
We note that $\prec$ is a strict partial order, i.e. it is transitive and irreflexive. The first method to ensure~\ref{pt:VP:cocycle}, inspired by R{\'e}nyi~\cite{Renyi56}, Cs{\'a}sz{\'a}r~\cite{Csaszar55} and Krauss~\cite{Krauss68}, will rely on \emph{chains} for this order, i.e.\ totally ordered sets, as follows.

\begin{thm}\label{thm:chain-VP}
Let $\sC$ be a chain of positive partial functionals on an ordered vector space $V$.

If $\sC$ is full, then $V$ admits a vector pricing $r$ such that
\begin{equation*}
\forall (U, J) \in \sC \ \forall u,v\in U^+\ \text{with } J(v) \neq 0 :\kern3mm r(u,v) = J(u) / J(v).
\end{equation*}
Moreover, that vector pricing is unique.
\end{thm}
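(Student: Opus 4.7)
The plan is to build $r$ by hand from $\sC$ and then check the axioms, exploiting the chain structure to keep things under control.

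First, I would show that for every non-zero $v\in V^+$ there is a \emph{unique} pair $(U,J)\in\sC$ with $v\in U$ and $J(v)\neq 0$. Existence is fullness. For uniqueness, if $(U_1,J_1)$ and $(U_2,J_2)$ are two such pairs and, say, $(U_1,J_1)\prec (U_2,J_2)$, then $v\in U_1\subseteq \ker J_2$ forces $J_2(v)=0$, a contradiction. Call the unique pair $(U_v,J_v)$ and set
\[
r(u,v) := \begin{cases} J_v(u)/J_v(v) & \text{if } u\in U_v,\\ +\infty & \text{if } u\notin U_v,\end{cases}
\]
for $v\neq 0$, extending to $v=0$ by \Cref{lem:no-0}.

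Axiom \ref{pt:VP:one} is immediate. For \ref{pt:VP:add} with fixed $w\neq 0$, additivity of $J_w$ handles the case $u_1,u_2\in U_w$, while the ideal property of $U_w$ (applied to $0\leq u_i\leq u_1+u_2$) shows that whenever some $u_i\notin U_w$, also $u_1+u_2\notin U_w$, so both sides are $+\infty$. The crux is the cocycle \ref{pt:VP:cocycle}, which I would split into three cases according to the $\prec$-relation between $(U_v,J_v)$ and $(U_w,J_w)$. If they coincide, the identity reduces to the cancellation of ratios of $J$-values (with the $u\notin U$ case trivially $+\infty$ on both sides). If $(U_v,J_v)\prec (U_w,J_w)$, then $U_v\subseteq \ker J_w\subseteq U_w$, so $v\in U_w$ with $J_w(v)=0$ and hence $r(v,w)=0$: on the sub-case $u\in U_v$ both sides equal $0$, while on $u\notin U_v$ the product becomes $+\infty\cdot 0$, undefined, so nothing to check. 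The case $(U_w,J_w)\prec (U_v,J_v)$ is symmetric, using $U_w\subseteq U_v$ to show $r(v,w)=+\infty$ and comparing with $r(u,w)$ via whether $u$ lies in $U_w$.

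For uniqueness, let $r'$ be another such vector pricing. For $u\in U_v$ the required compatibility gives $r'(u,v)=J_v(u)/J_v(v)=r(u,v)$ directly. For $u\notin U_v$ I would test against $u+v$: by uniqueness of the pair associated with $u+v$, we must have $J_{u+v}(v)=0$ (otherwise $(U_{u+v},J_{u+v})$ would coincide with $(U_v,J_v)$, forcing $u\in U_v$). Hence $J_{u+v}(u)=J_{u+v}(u+v)\neq 0$, which gives $r'(u,u+v)=1$ and $r'(v,u+v)=0$. Were $r'(u,v)$ finite, the cocycle $r'(u,u+v)=r'(u,v)\,r'(v,u+v)$ would read $1=0$, a contradiction; so $r'(u,v)=+\infty=r(u,v)$.

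The main obstacle I anticipate is the meticulous bookkeeping for \ref{pt:VP:cocycle}: several of the sub-cases collapse to the undefined form $0\cdot +\infty$ and must be explicitly \emph{excluded} from the check rather than verified, so it is important to articulate clearly which of the six combinations (three chain-cases times ``$u\in U_v$ or not'') are in the domain of the axiom.
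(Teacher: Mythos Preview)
Your argument is correct. The paper takes a genuinely different route: rather than defining $r$ directly, it first constructs the associated \emph{conditional mean} $P(u\mid v)=J^v(u)/J^v(v)$ on pairs $0\leq u\leq v$, verifies the simpler axioms \ref{pt:CM:add}--\ref{pt:CM:one} (where the chain case-split for \ref{pt:CM:trans} reduces to just two cases, all values staying in $[0,1]$), and then invokes the bijection of \Cref{prop:VP:CM:equiv} to obtain $r$; a short final step shows $J^v=J^{u+v}$ whenever $u\in U_v$, which upgrades the formula from $u\leq v$ to arbitrary $u\in U^+$. Uniqueness is then inherited from the uniqueness of $P$ via the same bijection.

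Your direct construction bypasses \Cref{prop:VP:CM:equiv} entirely, which is a genuine simplification in dependencies; the price is the heavier case analysis for \ref{pt:VP:cocycle} (three $\prec$-configurations crossed with membership of $u$, plus careful tracking of which sub-cases land on the undefined form). Your uniqueness argument --- forcing $J_{u+v}(v)=0$ when $u\notin U_v$ and then reading off $r'(u,v)=+\infty$ from the cocycle --- is in fact the same computation the paper uses implicitly when showing $J^v=J^{u+v}$, just deployed toward a different conclusion.
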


It will also be very useful to have a more flexible statement for collections of positive partial functionals that need not be chains; we shall turn to that below in \Cref{thm:PPF-VP}.

\begin{proof}[Proof of \Cref{thm:chain-VP}]
We first claim that $V$ admits a conditional mean $P$, and only one, such that
\begin{equation*}
\forall (U, J) \in \sC \ \forall 0\leq u\leq v\in U^+\ \text{with } J(v) \neq 0 :\kern3mm P(u|v) = J(u) / J(v).
\end{equation*}
To begin with, observe that each $(U,J)\in\sC$ is uniquely determined by $J$. Indeed, if $\sC$ also contained $(U', J)$ with $U'\neq U$ then without loss of generality $(U, J) \prec (U', J)$, implying $J(U)=0$, which we excluded in the definition of positive partial functionals.

Therefore we can simplify notation and denote any $(U,J)\in \sC$ just by $J$; accordingly, we write $J_1\prec J_2$ etc.

Let $v\in V^+$ be non-zero. By fullness, there is $J \in \sC$ with $J(v) \neq 0$ (and $v$ in the domain of $J$). Since $\sC$ is a chain, the definition of the order $\prec$ shows that there can only be one such $J \in \sC$. We thus denote the corresponding pair by $(U^v, J^v)$. (Beware of the misleading similarity with the notation $V_v$.) We shall use repeatedly that $u\in U^v$ for all $0\leq u \leq v$ since $U^v$ is an ideal.

At this point, the initial existence and uniqueness claim is reduced to stating that the expression
\begin{equation*}
 P(u|v) = J^v(u) / J^v(v) \text{ for } 0\leq u\leq v \neq 0
\end{equation*}
defines indeed a conditional mean on $V$. Only \ref{pt:CM:trans} needs a justification. Pick thus any $0\leq u \leq v \leq w$ with $v\neq 0$. We need to justify
\begin{equation*}
\frac{J^v(u)}{J^v(v)} \ \frac{J^w(v)}{J^w(w)} =\frac{J^w(u)}{J^w(w)}.
\end{equation*}
This is tautological if $J^v=J^w$. Otherwise, recalling $v\in U^w$, the chain condition on $\sC$ forces $J^w(v)=0$ and therefore also $J^w(u)=0$ since $u\leq v$. In that case, the above relation is of the form $0=0$. The first claim stands established.

\smallskip
By \Cref{prop:VP:CM:equiv}, there is one and only one vector pricing $r$ corresponding to $P$. To justify that $r$ satisfies the statement of the theorem, let $(U, J) \in \sC$ and let $u,v\in U^+$ with $J(v) \neq 0$. We know $J=J^v$ and need to show $r(u,v) = J^v(u) / J^v(v)$, which so far we only established under the additional assumption $u\leq v$. On the other hand, \Cref{prop:VP:CM:equiv} together with the initial claim grants us
\begin{equation*}
r(u,v) = \frac{P(u|u+v)}{P(v|u+v)} = \frac{J^{u+v}(u) / J^{u+v}(u+v)}{J^{u+v}(v) / J^{u+v}(u+v)} = \frac{J^{u+v}(u)}{J^{u+v}(v)}.
\end{equation*}
In conclusion, to finish the proof of the theorem, it suffices to show that under the current assumptions on $u,v$ and on $J=J^v$, we have $J^v=J^{u+v}$. This holds by uniqueness of $J^{u+v}$ because $J^v(u+v) \geq J^v(v)>0$; we used here $u+v\in U=U^v$.
\end{proof}

\subsection{Maximality}
At this point we can apply \Cref{thm:chain-VP} to obtain the existence of vector pricings, which is the first part of \Cref{ithm:VP:exists-and-extends} stated in the Introduction.

\begin{thm}\label{thm:VP:exists}
Every ordered vector space admits a vector pricing.
\end{thm}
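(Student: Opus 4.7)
The plan is to invoke \Cref{thm:chain-VP}, which reduces the task to exhibiting a full chain of positive partial functionals on $V$. I would apply Zorn's lemma to the family of $\prec$-chains of positive partial functionals on $V$, ordered by inclusion; unions of nested chains are chains, so Zorn applies and yields a maximal chain $\sC$. The remaining work is to show that maximality forces fullness.

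Assume for contradiction that some $v \gneqq 0$ is not captured by $\sC$, meaning that every $(U, J) \in \sC$ with $v \in U$ satisfies $J(v) = 0$. Split $\sC = \sC^+ \sqcup \sC^-$ according to whether $v \in U$ or not. The domains in $\sC$ are totally ordered by inclusion (since $(U_1, J_1) \prec (U_2, J_2)$ forces $U_1 \se U_2$), and because $v$ lies in each $U$ from $\sC^+$ but in no $U$ from $\sC^-$, every ideal from $\sC^-$ is strictly contained in every ideal from $\sC^+$. Let $W = \bigcup_{(U, J) \in \sC^-} U$, which is an ideal of $V$ (directed union of ideals), and set $U' = V_{\{v\} \cup W}$. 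Then $W \se U'$, so each $U$ from $\sC^-$ sits inside $U'$, whereas $U' \se U$ for every $(U, J) \in \sC^+$ because each such $U$ already contains both $v$ and $W$.

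The crux is to produce a positive linear functional $J'$ on $U'$ with $J'|_W = 0$ and $J'(v) > 0$. By \Cref{lem:ideal}\ref{pt:ideal:sums}, every element of $(U')^+$ is dominated by some $nv + w$ with $w \in W^+$, so $v$ projects to an order unit $\bar v$ in the ordered quotient $U'/W$; a short ideal check using $v \notin W$ confirms that the quotient cone is proper and $\bar v \neq 0$. Quotienting $U'/W$ further by its ideal of infinitesimals produces an Archimedean ordered vector space with non-zero positive order unit, where a standard Hahn--Banach argument against the seminorm $p(x) = \inf\{t \geq 0 : -t \bar v \leq x \leq t \bar v\}$ supplies a positive linear functional sending the class of $\bar v$ to $1$. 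Pulling back gives $J'$. Then $(U', J')$ is $\prec$-comparable with every member of $\sC$: above each $(U, J) \in \sC^-$ because $W \se \ker J'$, and below each $(U, J) \in \sC^+$ because such $J$ is positive, vanishes on $v$ and on $W$, hence on all of $(U')^+$ by \Cref{lem:ideal}\ref{pt:ideal:sums}, and therefore on $U'$ since the order on $U'$ is generating by \Cref{lem:ideal}\ref{pt:ideal:generate}. Because $J'(v) \neq 0$, the pair $(U', J')$ cannot already lie in $\sC$, contradicting maximality.

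The main obstacle is the construction of $J'$, and it is delicate precisely because $V$ itself may be singular (as in \Cref{exam:singular}). Although global positive functionals can fail to exist, the order-unit structure of the quotient $U'/W$ modulo infinitesimals is exactly what allows Hahn--Banach to run locally; this is where the flexibility afforded by \emph{partial} functionals pays off.
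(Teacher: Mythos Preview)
Your argument mirrors the paper's: obtain a maximal $\prec$-chain via Zorn (the paper uses Hausdorff's maximality principle), then show maximality implies fullness by building a new partial functional on the ideal $U'$ generated by $v$ and the union $W$ of the ``small'' domains, using a Hahn--Banach/Kantorovich construction on the quotient $U'/W$; the paper isolates this step as \Cref{thm:max-chain} and applies Kantorovich directly on $U'/W$ without the detour through an Archimedean quotient. One small slip to fix: \Cref{lem:ideal}\ref{pt:ideal:sums} and~\ref{pt:ideal:generate} require the generating set to lie in $V^+$, which $\{v\}\cup W$ does not; use part~\ref{pt:ideal:explicit} instead, which already gives that every $x\in U'$ is sandwiched between two elements of $\RR v+W$---this suffices both for the order-unit property of $\bar v$ in $U'/W$ and, by squeezing $J(x)$ between $J(y)=0$ and $J(z)=0$, for the vanishing of $J$ on $U'$ when $(U,J)\in\sC^+$.
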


We shall use the following, which relies on an appropriate variant of the Hahn--Banach theorem.

\begin{thm}\label{thm:max-chain}
Let $\sC$ be a chain of positive partial functionals on an ordered vector space $V$.

Then $\sC$ is full if and only if it is a maximal chain.
\end{thm}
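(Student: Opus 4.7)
My plan is to dispatch ($\Rightarrow$) with a quick comparability argument and concentrate on ($\Leftarrow$), where I will construct a fresh positive partial functional to extend $\sC$ and contradict maximality.

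For ($\Rightarrow$) I argue by contraposition: if some $(U, J) \notin \sC$ is comparable with every element of $\sC$, then since $J \neq 0$ and \Cref{lem:ideal}\ref{pt:ideal:generate} gives $U = U^+ - U^+$, I can pick $v \in U^+$ with $J(v) > 0$. By fullness, some $(U', J') \in \sC$ has $v \in U'$ and $J'(v) > 0$; comparability forces either $U \se \ker J'$ or $U' \se \ker J$, and either alternative kills one of $J(v)$, $J'(v)$.

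For ($\Leftarrow$) I assume $\sC$ is maximal but fails fullness at some $v \gneqq 0$ (so no $(U, J) \in \sC$ has both $v \in U$ and $J(v) \neq 0$) and aim to build $(U_0, J_0) \notin \sC$ comparable with every element of $\sC$. Partition $\sC = \sC_< \sqcup \sC_>$ by whether $v \in U$; by hypothesis $J(v) = 0$ throughout $\sC_>$. The chain condition applied to $(U_1, J_1) \in \sC_<$ and $(U_2, J_2) \in \sC_>$, combined with $v \in U_2 \setminus U_1$, forces $U_1 \se \ker J_2$, so
\[
W := \bigcup_{(U, J) \in \sC_<} U \ \se \ \bigcap_{(U, J) \in \sC_>} \ker J =: A,
\]
with $W$ and $A$ both ideals of $V$ (each $\ker J$ is an ideal of $V$ by positivity of $J$ plus the ideal property of its domain), and $v \in A \setminus W$.

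Taking $U_0$ to be the ideal of $A$ generated by $W \cup \{v\}$, I define $\phi(tv + w) := t$ on $\RR v + W \se U_0$; this is well-defined because $v \notin W$ and positive because $tv + w \geq 0$ with $t < 0$ would yield $0 \leq -tv \leq w \in W$, forcing $v \in W$ by the ideal property — impossible. By \Cref{lem:ideal}\ref{pt:ideal:explicit}, $\RR v + W$ majorises $U_0$, so the Kantorovich extension theorem produces a positive linear $J_0 \colon U_0 \to \RR$ extending $\phi$, with $J_0(v) = 1$ and $J_0|_W = 0$. Then $(U_0, J_0) \prec (U, J)$ holds for every $(U, J) \in \sC_>$ (via $U_0 \se A \se \ker J$), $(U, J) \prec (U_0, J_0)$ for every $(U, J) \in \sC_<$ (via $U \se W \se \ker J_0$), and $(U_0, J_0) \notin \sC$ by non-fullness at $v$; this strictly enlarges $\sC$, contradicting maximality. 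The hard part is arranging $J_0$ to meet three demands simultaneously — being carried by an ideal inside $A$, annihilating $W$, and staying non-zero on $v$ — and the key observation is that the chain data pin down the admissible enclosing ideals $A$ and $W$, leaving $\RR v + W$ as the natural minimal space on which a $v$-coordinate functional vanishing on $W$ exists; Kantorovich then handles the extension once positivity of $\phi$ is secured via the ideal property of $W$.
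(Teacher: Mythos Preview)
Your proof is correct and follows essentially the same route as the paper's. For the substantive direction ($\Leftarrow$), both arguments take $W$ to be the union of the domains not containing $v$, build the new functional via Kantorovich on the ideal generated by $W\cup\{v\}$, and then check comparability; you define $\phi$ directly on $\RR v + W$ and introduce the auxiliary ideal $A=\bigcap_{\sC_>}\ker J$ so that $(U_0,J_0)\prec\sC_>$ is automatic, whereas the paper passes to the quotient by $W$ and verifies ad hoc that each $J$ with $v\in U$ annihilates the new domain --- these are cosmetic differences only.

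One minor quibble on ($\Rightarrow$): your appeal to \Cref{lem:ideal}\ref{pt:ideal:generate} is not justified as stated, since that lemma requires the generating set to lie in $V^+$, and an arbitrary ideal $U$ need not satisfy $U=U^+-U^+$. The paper's proof is equally informal at this step (it applies fullness to a vector $v$ with $J'(v)\neq 0$ without arranging $v\gneqq 0$), so you are in good company, but the citation should be dropped or the point argued differently.
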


In view of Hausdorff's maximality principle, the existence of vector pricings stated in \Cref{thm:VP:exists} now follows by combining \Cref{thm:max-chain} with \Cref{thm:chain-VP}.

\begin{proof}[Proof of \Cref{thm:max-chain}]
Suppose first that $\sC$ is maximal. Let $v\in V^+$ be a non-zero positive vector; we need to show that there is $(U, J)\in\sC$ with $v\in U$ and $J(v)\neq 0$. Let $W_0\se V$ be the union of the ideals $U$ with $v\notin U$ when $(U, J)$ ranges over $\sC$; by convention, $W_0=0$ if there is no such ideal $U$. Since $\sC$ is a chain, $W_0$ is an ideal in $V$ and  $v\notin W_0$. We consider the ideal $W\se V$ generated by $W_0$ and $v$; we further form the quotient $\overline W = W/W_0$. The image $\overline v$ of $v$ in $\overline W$ is non-zero and positive for the quotient order; we recall that the quotient order is well-defined because $W_0$ is an ideal (in $V$ and hence in $W$). Moreover $\RR \overline v$ is majorizing in $\overline W$ by construction. Therefore, we can apply the Kantorovich\footnote{This result is known under many names; Kantorovich proved a foundational case of it in his lemma p.~535 of~\cite[\S4]{Kantorovic}. See also~\cite{Krein37}, \cite{Bauer57} and~\cite[\S2]{Namioka57}.} theorem (\cite[Theorem~1.6.1]{Jameson} or~\cite[Theorem~1.36]{Aliprantis-Tourky}) to obtain a positive linear functional $\overline K \colon \overline W \to\RR$ with $K(\overline v) >0$. This gives us by pull-back a positive partial functional $(W, K)$.

We now suppose for a contradiction that there is no $(U, J) \in \sC$ with $v\in U$ and $J(v) \neq 0$ and we argue that $(W, K)$ can be added to $\sC$, contradicting maximality. Consider any $(U, J)\in \sC$.  By construction, we have $(U, J) \prec (W, K)$ whenever $v\notin U$. It suffices therefore to show $(W, K) \prec (U, J)$ when $v\in U$. The current assumption implies that $J(v)=0$ holds in that case. On the other hand, if $(U', J')\in \sC$ satisfies $v\notin U'$ then $(U', J') \prec  (U, J)$ because the reverse order is impossible. Therefore $J$ vanishes on any such $U'$ and hence on $W_0$. It follows that $J$ vanishes on $W$, as was to be shown.

\smallskip
The converse is elementary. If a full chain $\sC$ were not maximal, then there would be some positive partial functional $(U', J')$ not in $\sC$ that is comparable to every element of $\sC$. Choose some $v\in {U'}^+$ with $J'(v)\neq 0$. By assumption, there is $(U, J) \in \sC$ with $v\in U$ and $J(v) \neq 0$. Then both $(U', J') \prec (U, J)$ and $(U, J) \prec (U', J')$ are impossible.
\end{proof}

\Cref{ithm:VP:exists-and-extends} also contains an extension statement:

\begin{thm}\label{thm:VP:extend}
Let $V\se W$ be a vector subspace of an ordered vector space $W$, with the induced order.

Then every vector pricing on $V$ can be extended to a vector pricing on $W$.
\end{thm}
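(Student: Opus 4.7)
The plan is to convert $r$ into a full chain of positive partial functionals on $V$, lift this chain element by element into $W$ while preserving chain compatibility, enlarge the result to a maximal chain on $W$, and read off the extension via \Cref{thm:chain-VP}.

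First I would exhibit a full chain $\sC_V$ on $V$ that induces $r$. On $V^+\smallsetminus\{0\}$, let $v\sim v'$ if $r(v,v')\in(0,+\infty)$; this is an equivalence relation by \ref{pt:VP:cocycle} and \ref{pt:VP:inv}, and $[v]\prec[v']\Leftrightarrow r(v,v')=0$ defines a total order on the quotient. Choosing a representative in each class, set $(U_{[v]},J_{[v]}):=(\finite(v), r(\cdot,v))$; different representatives produce proportional functionals on the same ideal by \ref{pt:VP:cocycle}, and direct checks show $U_{[v']}\subseteq\ker J_{[v]}$ whenever $[v']\prec[v]$. Thus $\sC_V=\{(U_{[v]},J_{[v]})\}_{[v]}$ is a full chain from which \Cref{thm:chain-VP} recovers $r$.

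Next I would lift $\sC_V$ to $W$. Let $\tilde U_{[v]}$ be the ideal of $W$ generated by $U_{[v]}$; then \Cref{lem:ideal} yields $\tilde U_{[v]}\cap V = U_{[v]}$, and the nestedness $U_{[v']}\subseteq U_{[v]}$ for $[v']\prec[v]$ survives the lifting. Set $N_{[v]}:=\bigcup_{[v']\prec[v]}\tilde U_{[v']}$, an ideal in $\tilde U_{[v]}$; since $N_{[v]}\cap V = \bigcup_{[v']\prec[v]} U_{[v']}\subseteq\ker J_{[v]}$, the functional $J_{[v]}$ descends to a positive linear functional on the image of $U_{[v]}$ in the quotient $\tilde U_{[v]}/N_{[v]}$. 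That image majorizes the quotient, so Kantorovich's extension theorem (as in the proof of \Cref{thm:max-chain}) produces a positive extension whose pull-back $\tilde J_{[v]}$ on $\tilde U_{[v]}$ extends $J_{[v]}$ and vanishes on $N_{[v]}$. By construction, $\tilde\sC:=\{(\tilde U_{[v]}, \tilde J_{[v]})\}_{[v]}$ is a chain on $W$.

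To finish, extend $\tilde\sC$ to a maximal---hence, by \Cref{thm:max-chain}, full---chain $\tilde\sC_W$ on $W$ via Hausdorff's maximality principle, and let $\tilde r$ be the vector pricing produced by \Cref{thm:chain-VP}. Verifying $\tilde r|_V = r$ reduces to showing that for any $v\in V^+\smallsetminus\{0\}$ the unique element of $\tilde\sC_W$ nontrivial at $v$ remains $(\tilde U_{[v]},\tilde J_{[v]})$: any additional chain element either lies below it, so its ideal sits inside $\ker\tilde J_{[v]}$ and cannot contain the non-kernel vector $v$, or lies above it, so $v\in\tilde U_{[v]}$ is annihilated. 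The hard part is Step~3---engineering the lifted functionals $\tilde J_{[v]}$ to be simultaneously compatible with the entire chain---which is precisely what the quotient-and-Kantorovich argument above is designed to handle.
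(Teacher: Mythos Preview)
Your proposal is correct and follows the same overall strategy as the paper: extract a full chain from $r$, lift it to $W$, enlarge to a maximal (hence full) chain via Hausdorff, apply \Cref{thm:chain-VP}, and invoke uniqueness to verify the restriction. Your chain $\sC_V$ built from the equivalence classes $[v]$ is precisely a maximal chain inside $\sF_r$, which is how the paper phrases it (using \Cref{prop:max-chain-Fr}); and your final verification is the same uniqueness argument the paper uses.

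The one substantive difference is in the lifting step, where you work harder than necessary. You pass to the quotient $\tilde U_{[v]}/N_{[v]}$ in order to \emph{force} the extended functional $\tilde J_{[v]}$ to vanish on all $\tilde U_{[v']}$ with $[v']\prec[v]$. The paper observes that this vanishing is automatic for \emph{any} positive Kantorovich extension of $J_{[v]}$ from $U_{[v]}$ to $\tilde U_{[v]}$: every $w\in\tilde U_{[v']}$ is sandwiched $x\le w\le y$ with $x,y\in U_{[v']}\subseteq\ker J_{[v]}$, and positivity then forces $\tilde J_{[v]}(w)=0$. So one may extend each functional independently, without any quotient, and the chain condition survives for free. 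Your quotient argument is not wrong---it does produce a valid extension---but it obscures the simple reason the lifting preserves $\prec$, and it requires the extra (true, but not entirely trivial) check that $J_{[v]}$ descends positively to the quotient order.
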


For the proof, we recall a notation introduced in the proof of \Cref{lem:VP:properties}. Given a vector pricing $r$ on $V$, we define for every $v\gneqq 0$
\begin{equation*}
\finite^+(v)=\{ u\in V^+ : r(u, v) \neq +\infty \}\kern3mm\text{and}\kern3mm \finite(v) = \finite^+(v) - \finite^+(v).
\end{equation*}
We saw that $\finite(v)$ is an ideal with positive cone $(\finite(v))^+= \finite^+(v)$ and that $r(\cdot, v)$ extends to a positive linear functional on $\finite(v)$. Thus any vector pricing $r$ defines canonically a family $\sF_r$ of positive partial functionals consisting of all such pairs $(\finite(v), r(\cdot, v))$. Moreover, the properties \ref{pt:VP:cocycle} and \ref{pt:VP:inv} imply readily, for all $u,v\gneqq 0$:
\begin{equation*}
(\finite(u), r(\cdot, u)) \prec (\finite(v), r(\cdot, v)) \Longleftrightarrow r(u, v) = 0  \Longleftrightarrow r(v,u) = +\infty.
\end{equation*}
An observation remaining to be made for \Cref{thm:VP:extend} is the following elementary (non-existential) supplement to \Cref{thm:max-chain}.

\begin{prop}\label{prop:max-chain-Fr}
Given a vector pricing $r$, let $\sC\se \sF_r$ be a chain that is maximal within $\sF_r$. Then $\sC$ is full.

Equivalently, $\sC$ is maximal among all chains of positive partial functionals on~$V$.
\end{prop}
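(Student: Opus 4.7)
The plan is to prove the substantive claim, namely that any $\sC \subseteq \sF_r$ which is maximal within $\sF_r$ is full; the equivalence with being a maximal chain among all positive partial functionals on $V$ will then be immediate from \Cref{thm:max-chain} (applied together with the fact that fullness already makes $\sC$ maximal among all chains, and conversely maximality among all chains trivially implies maximality within $\sF_r$).

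To show fullness, I fix an arbitrary $v \gneqq 0$ and consider the positive partial functional $F_v := (\finite(v), r(\cdot, v)) \in \sF_r$. Since $v \in \finite(v)$ and $r(v,v)=1 \neq 0$, if $F_v \in \sC$ there is nothing to prove. Otherwise, maximality of $\sC$ within $\sF_r$ says that $\sC \cup \{F_v\}$ is not a chain, so there exists some $F_w = (\finite(w), r(\cdot, w)) \in \sC$ incomparable with $F_v$ under $\prec$.

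Using the characterization displayed just before the statement, $F_v \not\prec F_w$ means $r(v, w) \neq 0$, while $F_w \not\prec F_v$ means $r(w, v) \neq 0$. The second condition, combined with the symmetry $r(v,w) = 1/r(w,v)$ from \ref{pt:VP:inv}, forces $r(v,w) \neq +\infty$. Hence $0 < r(v,w) < +\infty$, which means precisely that $v \in \finite(w)$ and that $r(\cdot, w)$ does not vanish at $v$. Thus $F_w \in \sC$ is a positive partial functional witnessing fullness for $v$.

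The only point requiring any attention is recognizing that the two-sided incomparability exploited through the symmetry $r(v,w)=1/r(w,v)$ is exactly what rules out both extreme values simultaneously. Everything else is a direct unwinding of the definitions, and the ``equivalently'' clause is then a formal consequence of \Cref{thm:max-chain}.
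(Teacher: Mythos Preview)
Your proof is correct and is essentially the contrapositive of the paper's argument: the paper assumes $\sC$ is not full at some $v$ and shows that $(\finite(v), r(\cdot,v))$ is then comparable to every element of $\sC$ (contradicting maximality), whereas you start from maximality, obtain an incomparable $F_w$, and read off fullness at $v$ from the displayed characterization of $\prec$ together with \ref{pt:VP:inv}. The underlying mechanism---translating (in)comparability in $\sF_r$ into the trichotomy $r(v,w)\in\{0,+\infty\}$ versus $0<r(v,w)<+\infty$---is identical in both proofs.
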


\begin{proof}
Suppose for a contradiction that $\sC$ is not full. Thus there is $v\gneqq 0$ such that for every $(\finite(u), r(\cdot, u)) \in \sC$ we have either $v\notin \finite(u)$ or $r(v,u)=0$. We claim that we can then add $(\finite(v), r(\cdot, v))$ to $\sC$, contradicting maximality. It suffices to check that for all $(\finite(u), r(\cdot, u)) \in \sC$, that we have
\begin{align*}
\text{either}  \kern3mm (\finite(v), r(\cdot, v)) &\prec (\finite(u), r(\cdot, u))\\
\text{or} \kern3mm  (\finite(u), r(\cdot, u)) &\prec (\finite(v), r(\cdot, v)).
\end{align*}
If $v\in \finite(u)$, then our apagogical assumption forces $r(v,u)=0$. Now \ref{pt:VP:cocycle} implies $r(w,u)=r(w,v) r(v,u)= 0$ for all $w\in \finite(v)$, which shows that the first alternative holds. Otherwise, $v\notin  \finite(u)$ implies $r(v, u)=+\infty$, which means $r(u,v)=0$ and by symmetry the second alternative holds.

This shows that $\sC$ is full and the maximality statement now follows from the easy direction in \Cref{thm:max-chain}.
\end{proof}

\begin{proof}[Proof of \Cref{thm:VP:extend}]
Let $V$ be a subspace of an ordered vector space $W$ and let $r$ be a vector pricing on $V$. Consider a chain $\sC\se \sF_r$ that is maximal within $\sF_r$. Given a positive partial functional $(U,J)$ on $V$, it can be extended to a positive partial functional $(W_U, J)$ on $W$, where as usual $W_U$ denotes the ideal in $W$ generated by $U$. Indeed, this follows from the Kantorovich theorem (\cite[Theorem~1.6.1]{Jameson} or~\cite[Theorem~1.36]{Aliprantis-Tourky}) because $U$ majorizes $W_U$. There is no ambiguity in the notation $J$ because $W_U \cap V = U$ since $U$ is an ideal in $V$.

Having chosen such extensions for each $(U, J) \in \sC$, we consider $\sC$ as a chain of positive partial functionals on $W$. We note at this occasion that the definition of the order $\prec$ does not depend on the ambient vector space, but only on the partial functionals being compared. Indeed, if $(U,J)$ and $(U', J')$ are elements of $\sC$ for $V$, then $U \se \ker(J') \Leftrightarrow W_U \se \ker(J')$ under any extension of $J'$ to $W_{U'}$.

By Hausdorff's maximality principle,  there exists a maximal chain $\sC'$ containing $\sC$, maximal among all chains of positive partial functionals on $W$.  We apply \Cref{thm:max-chain} and \Cref{thm:chain-VP} to the chain $\sC'$ for $W$ and obtain a vector pricing $r'$ on $W$. It remains to show that $r'$ coincides with $r$ on $V$. This follows from the uniqueness statement of \Cref{thm:chain-VP} applied to the chain $\sC$ for $V$ because \Cref{prop:max-chain-Fr} validates the required hypothesis.
\end{proof}

\section{Invariance}\label{sec:inv}
We are now ready for the main results. For the record, we clarify the definition of invariance for vector pricings. Although we are mainly working with groups, we state this more generally for monoids because this will be convenient for handling stationarity.

\begin{lem}\label{lem:def-inv}
Consider a representation of a monoid $G$ on an ordered vector space $V$ and a vector pricing $r$ on $V$.

The following conditions are equivalent.
\begin{enumerate}[label=\normalfont(\roman*)]
\item $r(u, v) = r(g u,v)$ for all $g\in G$ and $u,v\in V^+$.
\item $r(u, v) = r(u,g v)$ for all $g\in G$ and $u,v\in V^+$.
\item $r(u, v) = r(g u,h v)$ for all $g, h\in G$ and $u,v\in V^+$.
\item $r(u, g u) = 1$ for all $g\in G$ and $u\in V^+$.\label{pt:def-inv:1}
\item $r(u, u+g u) = 1/2$ for all $g\in G$ and $u\in V^+$, $u\neq 0$.\label{pt:def-inv:2}
\end{enumerate}
In any of the above, we can furthermore restrict to $u,v\neq 0$.\qed
\end{lem}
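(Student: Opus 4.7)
The plan is to prove the chain of equivalences (i)$\Leftrightarrow$(ii), (i)$\Leftrightarrow$(iv)$\Leftrightarrow$(v), and (i)$\Leftrightarrow$(iii). Throughout I intend to invoke the final clause and restrict attention to $u,v\neq 0$, which keeps all the values of $r$ appearing in the argument in $(0,+\infty)$ and legitimises every application of the cocycle~\ref{pt:VP:cocycle} and the inversion formula~\ref{pt:VP:inv}.

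First, (i)$\Leftrightarrow$(ii) is a one-line consequence of~\ref{pt:VP:inv}: assuming (i), one computes $r(u,gv) = 1/r(gv,u) = 1/r(v,u) = r(u,v)$, the middle step being (i); the converse is symmetric. For (i)$\Leftrightarrow$(iv), the forward direction specialises $v=gu$ in (i), which together with~\ref{pt:VP:one} yields $r(u,gu) = r(gu,gu) = 1$; conversely, (iv) combined with~\ref{pt:VP:inv} gives $r(gu,u)=1$, so the cocycle (applicable since $1$ is neither $0$ nor $+\infty$) provides $r(gu,v) = r(gu,u)\,r(u,v) = r(u,v)$. For (iv)$\Leftrightarrow$(v), I would apply additivity~\ref{pt:VP:add} with $w=u$ to obtain $r(u+gu,u) = 1 + r(gu,u)$; inverting via~\ref{pt:VP:inv} then identifies $r(u,u+gu)=1/2$ with $r(gu,u)=1$, i.e.\ with (iv).

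It remains to establish (i)$\Leftrightarrow$(iii). That (i) and (ii) jointly imply (iii) is immediate: $r(gu,hv) = r(u,hv) = r(u,v)$. The reverse direction is the step I expect to cost the most thought, since $G$ is only a semigroup and we cannot simply substitute an identity for one of the shift parameters in (iii). The plan is to fix any auxiliary $h_0\in G$ and apply (iii) twice: applied to the pair $(u,v)$ with shifts $(g^2, h_0)$ it yields $r(u,v) = r(g^2u, h_0 v)$; applied to the pair $(gu,v)$ with shifts $(g, h_0)$ it yields $r(gu,v) = r(g^2 u, h_0 v)$; comparing the two gives $r(gu,v) = r(u,v)$, which is (i).

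The main obstacle, as just described, is the semigroup hypothesis: for a group the equivalence of (iii) with (i) follows instantly by taking $h$ to be the identity, whereas the semigroup case forces the double application of (iii) through an auxiliary $h_0$. A secondary, purely technical nuisance is the extended-real arithmetic on $[0,+\infty]$, where the only undefined form in~\ref{pt:VP:cocycle} is $0\cdot(+\infty)$; systematically restricting to $u,v\neq 0$ keeps every quantity appearing in the argument in $(0,+\infty)$ and sidesteps the issue.
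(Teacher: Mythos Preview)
Your argument is correct and follows the same route as the paper's (terse) proof, with the welcome addition of spelling out (iii)$\Rightarrow$(i) for genuine semigroups via the double-application trick; the paper simply declares that (i)--(iv) are ``readily'' equivalent from \ref{pt:VP:cocycle} without addressing the absence of an identity. One small inaccuracy worth fixing: restricting to $u,v\neq 0$ does \emph{not} keep the values of $r$ in $(0,+\infty)$ --- for nonzero $u,v$ one may perfectly well have $r(u,v)=0$ or $+\infty$ --- so that is not what legitimises your cocycle steps. What actually makes each application of \ref{pt:VP:cocycle} valid is that in every instance one of the two factors equals~$1$ (coming from (iv) or \ref{pt:VP:one}), and $1\cdot x$ is defined for all $x\in[0,+\infty]$; you should phrase the justification that way.
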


\begin{defi}\label{defi:def-inv}
When the conditions of \Cref{lem:def-inv} are satisfied, the vector pricing $r$ is called \textbf{$G$-invariant}.
\end{defi}

We point out that if $r$ is a general vector pricing, there is no reason that for a given $g\in G$ the map $(u,v) \mapsto r(g u, v)$ should satisfy \ref{pt:VP:cocycle} or \ref{pt:VP:one}.

The importance of having conditions such as \ref{pt:def-inv:1} and \ref{pt:def-inv:2} will become apparent in the next section. The interest of \ref{pt:def-inv:2} is that it is well-defined for the corresponding conditional mean since $u\leq u+g u$. This gives us the following.

\begin{cor}\label{cor:def-inv}
Let $V$ be an ordered vector space, $r$ a vector pricing on $V$ and $P$ the corresponding conditional mean. Let further $G$ be a monoid acting on $V$.

Then $r$ is $G$-invariant if and only if $P$ is $G$-invariant in the sense that $P(g u | v) = P(u | v)$ holds whenever $0\leq u, g u \leq v\neq 0$.
\end{cor}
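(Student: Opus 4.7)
The plan is to reduce everything to the equivalent characterizations collected in \Cref{lem:def-inv}, which make the forward direction trivial and reduce the reverse direction to verifying a single, convex condition.

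For the forward implication, suppose $r$ is $G$-invariant. Take any $0\leq u, gu\leq v \neq 0$. Since $u\leq v$ and $gu\leq v$, both $P(u|v) = r(u,v)$ and $P(gu|v) = r(gu,v)$ are defined as values of the conditional mean. By condition (i) of \Cref{lem:def-inv}, $r(gu,v) = r(u,v)$, giving $P(gu|v) = P(u|v)$.

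For the reverse implication, I would target condition \ref{pt:def-inv:2} of \Cref{lem:def-inv}, namely $r(u, u + g u) = 1/2$ for all $g\in G$ and $u\in V^+$ nonzero (with $gu$ nonzero; the remaining degenerate cases are handled by \Cref{lem:no-0} and the observation that if $gu=0$ then we fall into condition \ref{pt:def-inv:1} directly, which is handled analogously). Given such $u$, set $v = u + gu$; then $v\neq 0$ and $0\leq u,gu\leq v$, so the hypothesis applies to give
\begin{equation*}
r(u, u+gu) = P(u|v) = P(gu|v) = r(gu, u+gu).
\end{equation*}
On the other hand, additivity \ref{pt:VP:add} together with \ref{pt:VP:one} yields
\begin{equation*}
r(u, u+gu) + r(gu, u+gu) = r(u+gu, u+gu) = 1,
\end{equation*}
so $r(u,u+gu)=1/2$, which is \ref{pt:def-inv:2}. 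Hence $r$ is $G$-invariant by \Cref{lem:def-inv}.

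There is no real obstacle here beyond a small bookkeeping point: one must check that the degenerate situations ($u=0$ or $gu=0$) do not cause trouble. Since $G$ acts by positive linear maps, $g\cdot 0 = 0$ and we can invoke \Cref{lem:no-0} and the normalization \ref{pt:VP:one} to conclude that in those cases the required identity reads either $r(0,0)=1$ or $r(u,u)=1$, both tautological, and the hypothesis on $P$ is vacuous there. Thus the equivalence holds without further assumptions on $V$ or on the action.
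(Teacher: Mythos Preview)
Your argument is correct and follows the same route as the paper: verify criterion~\ref{pt:def-inv:2} of \Cref{lem:def-inv} by setting $v=u+gu$, using the assumed invariance of $P$ to get $r(u,u+gu)=r(gu,u+gu)$, and then additivity plus \ref{pt:VP:one} to conclude $r(u,u+gu)=1/2$. The forward direction is, as you say, tautological.

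One small remark: your handling of the degenerate case $gu=0$ with $u\neq 0$ is slightly off. You claim it ``falls into condition~\ref{pt:def-inv:1} directly'', but \ref{pt:def-inv:1} would then read $r(u,0)=1$, which is false by \ref{pt:VP:0}. The correct observation is that this case never arises under the hypothesis: if $gu=0$ and $u\neq 0$, then taking $v=u$ in the assumed invariance of $P$ gives $P(0|u)=P(u|u)$, i.e.\ $0=1$, so the hypothesis is already violated. In other words, no separate argument is needed --- the main computation (which only requires $u\neq 0$, as permitted by the last line of \Cref{lem:def-inv}) goes through verbatim, and the paper accordingly does not single out this case.
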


\begin{proof}[Proof of \Cref{lem:def-inv}]
We can restrict to $u,v\neq 0$ because of \Cref{lem:no-0}. The equivalence of the first four points follows readily from axiom \ref{pt:VP:cocycle}. It remains to discuss \ref{pt:def-inv:2} for $u\gneqq 0$. By \ref{pt:VP:inv}, that statement is equivalent to $r(u + g u, u)=2$, which is equivalent to \ref{pt:def-inv:1} since $r(u + g u, u)=1+ r(g u, u)$ and $r(g u, u)=1\Leftrightarrow r(u, g u)=1$.
\end{proof}

\begin{proof}[Proof of \Cref{cor:def-inv}]
We suppose that $P$ is invariant and proceed to verify the criterion~\ref{pt:def-inv:2} in \Cref{lem:def-inv}. Our assumption applied to $v=u + g u$ gives $r(g u, u+g u) = r(u, g u + u)$. Therefore $r(g u+u, g u + u)=1$ yields the desired conclusion. The converse implication is tautological.
\end{proof}

\subsection{Elementary properties}
We now investigate how to obtain conditional means with additional properties from a collection of positive partial functionals. We shall use that approach to construct invariant conditional means.

In order to consider the space of all conditional means on an ordered vector space $V$, we introduce the notation
\begin{equation*}
\sO = \sO(V, \leq) = \big\{ (u, v) \in V^2 : 0\leq u \leq v \neq 0 \big\}
\end{equation*}
and endow the space $[0,1]^{\sO}$ with the product topology. Thus, by definition, the set of conditional means is a compact subspace of that product space.

An \textbf{elementary (closed) cylinder} is a subset of $[0,1]^{\sO}$ obtained by imposing a closed condition $C\se [0, 1]$ upon a single pair $(u, v)\in\sO$. An \textbf{elementary property $\pi$} shall refer to any subset $\pi\se [0,1]^{\sO}$ that can be realized as an intersection of a family of elementary cylinders. We then say that a conditional mean $P$ \textbf{satisfies $\pi$} if $P\in \pi$.

Note that although elementary properties are closed properties in $[0,1]^{\sO}$, for instance the closed property \ref{pt:CM:add} is not elementary.

\medskip
More generally, we want to predicate elementary properties for collections $\sC$ of positive partial functionals. This requires care regarding domains of definition. We say that \textbf{$\sC$ satisfies $\pi$} if we can choose the family of elementary cylinders given by $(u, v)\in\sO$ and $C\se[0,1]$ in such a way that:

\begin{center}
\itshape
whenever $u, v\in U$ for some $(U, J)\in \sC$, we have $J(u) \in J(v) C$.
\upshape
\end{center}

\smallskip
\noindent
Equivalently: if $v\in U$ (which implies $u\in U$ since $0\leq u \leq v$), then  either $J(u) = J(v)=0$ or $J(u) / J(v)\in C$.

\begin{exam}\label{exam:inv}
Given a representation of a monoid $G$ on $V$, the property $\pi$ of $G$-invariance is elementary. Indeed, it is defined by the collection of all elementary cylinders given by $(u, u+ g u)$ and $C=\{1/2\}$, where $u\gneqq 0$ and $g\in G$.

Suppose now that $\sC$ consists of positive partial functionals $(U, J)$ where $U$ is a $G$-invariant subspace and $J$ is $G$-invariant (in the sense recalled in \Cref{sec:prelim}). Then $\sC$ satisfies $\pi$. Indeed, $u\in U$ iff $u+g u\in U$ and, in that case, either $J$ vanishes on both $u$ and $u+ g u$, or $J(u)/J(u +g u)=1/2$.
\end{exam}

In this setting, it does not seem clear how to obtain maximal chains. Instead, we rely on a compactness argument at the cost of giving up the uniqueness of the resulting conditional mean.

\begin{thm}\label{thm:PPF-VP}
Let $V$ be an ordered vector space, $\pi$ an elementary property and $\sC$ a collection of positive partial functionals which satisfies $\pi$.

Suppose that for every finite set $F\se V^+$ of non-zero positive vectors there exists a chain $\sC_F \se \sC$ such that
\begin{equation*}
\forall v\in F \ \exists (U, J)\in \sC_F : \kern3mm v\in U \text{ and } J(v) \neq 0.
\end{equation*}
Then $V$ admits a conditional mean that satisfies $\pi$.
\end{thm}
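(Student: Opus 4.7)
The plan is a compactness argument based on the finite intersection property in the compact space $[0,1]^{\sO}$. The set $\mathcal{P}$ of conditional means is closed in $[0,1]^{\sO}$, being defined by the closed conditions \ref{pt:CM:add}--\ref{pt:CM:one}. Writing $\pi = \bigcap_{i \in I} \Pi_i$ as an intersection of elementary cylinders $\Pi_i$ associated to pairs $(u_i, v_i) \in \sO$ and closed sets $C_i \se [0,1]$, each $\Pi_i$ is a closed subset of $[0,1]^{\sO}$. It therefore suffices, by the finite intersection property, to produce for every finite $I_0 \se I$ a single conditional mean satisfying $\Pi_i$ for all $i \in I_0$.

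Fix such an $I_0$ and form the finite set $F = \{v_i : i \in I_0\} \se V^+ \smallsetminus \{0\}$. The hypothesis yields a chain $\sC_F \se \sC$ such that each $v_i$ lies in the domain of some element of $\sC_F$ on which it is non-null. Invoking Hausdorff's maximality principle, extend $\sC_F$ to a chain $\sC'$ that is maximal among all chains of positive partial functionals on $V$; note that $\sC'$ may contain new elements not belonging to $\sC$. By \Cref{thm:max-chain}, $\sC'$ is full, and by \Cref{thm:chain-VP} it determines a vector pricing $r$ and hence a conditional mean $P$ on $V$ satisfying $P(u|v) = J(u)/J(v)$ whenever $(U, J) \in \sC'$, $u, v \in U$ and $J(v) \neq 0$.

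It remains to check $P(u_i|v_i) \in C_i$ for each $i \in I_0$. Since $v_i \in F$, pick $(U, J) \in \sC_F \se \sC'$ with $v_i \in U$ and $J(v_i) \neq 0$. As in the proof of \Cref{thm:chain-VP}, this $(U, J)$ is the unique element of the chain $\sC'$ whose functional does not vanish on $v_i$. Since $U$ is an ideal and $0 \leq u_i \leq v_i$, we also have $u_i \in U$, so $P(u_i|v_i) = J(u_i)/J(v_i)$. Because the pair $(U, J)$ belongs to the original collection $\sC$ and $u_i, v_i \in U$, the assumption that $\sC$ satisfies $\pi$ forces $J(u_i) \in J(v_i) C_i$; since $J(v_i) \neq 0$, this reads $P(u_i|v_i) \in C_i$, as desired.

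The main obstacle is conceptual rather than computational: passing from $\sC_F$ (which lies in $\sC$ and therefore respects $\pi$) to its maximal enlargement $\sC'$ may introduce partial functionals that do not obey $\pi$, so the conclusion cannot be read off $\sC'$ directly. The resolution is that the uniqueness of the non-vanishing functional at each $v_i$ within a chain (the key observation in \Cref{thm:chain-VP}) forces the value $P(u_i|v_i)$ to be computed from an element of the original $\sC_F \se \sC$, thereby inheriting the cylinder condition even though $\sC'$ itself need not satisfy~$\pi$.
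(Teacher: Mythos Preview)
Your argument is correct but proceeds differently from the paper. The paper's proof also uses compactness in $[0,1]^{\sO}$, but at each finite stage it only constructs a \emph{partial} map $P$ that satisfies \ref{pt:CM:add}--\ref{pt:CM:one} for $u,v,w$ ranging over a fixed finite set $E$ (this forces the bookkeeping $E'=E+E$ to handle \ref{pt:CM:add}). You instead produce at each finite stage a genuine conditional mean on all of $V$ by enlarging $\sC_F$ to a maximal chain and invoking \Cref{thm:max-chain} and \Cref{thm:chain-VP}; then you run the finite intersection property inside the compact space of conditional means. The key observation you single out---that within a chain there is a \emph{unique} functional not vanishing on a given $v$, so the value $P(u_i|v_i)$ is already determined by some element of $\sC_F\se\sC$---is exactly what makes your route work despite $\sC'$ not satisfying~$\pi$.

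Two small points worth tightening. First, the decomposition $\pi=\bigcap_i\Pi_i$ must be the family \emph{witnessing} that $\sC$ satisfies~$\pi$, not an arbitrary realization of $\pi$ as an intersection of cylinders; the definition of ``$\sC$ satisfies~$\pi$'' only guarantees one such family. Second, \Cref{thm:chain-VP} is stated for the vector pricing $r$, so strictly speaking you get $r(u_i,v_i)=J(u_i)/J(v_i)$ and then $P(u_i|v_i)=r(u_i,v_i)$ since $u_i\le v_i$. Both are cosmetic. Your approach buys a cleaner inductive step at the price of appealing to the Hausdorff/Kantorovich machinery behind \Cref{thm:max-chain}; the paper's approach is more self-contained but requires carrying the finite set $E$ through the verification of the axioms.
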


Despite the similarities with \Cref{thm:chain-VP}, we must be mindful that the logical structure of the statements is different; \Cref{thm:chain-VP} does not reduce to the special case of the tautological property $\pi$ in \Cref{thm:PPF-VP}.

\begin{proof}[Proof of \Cref{thm:PPF-VP}]
We fix a family of elementary cylinders witnessing that $\sC$ satisfies $\pi$. Our definition of conditional means is given by an intersection of closed properties; therefore, it suffices to prove the following statement.

$(*)$ Given any finite set $0\in E\se V^+$, there is a map $P\colon \sO \to [0, 1]$ satisfying \ref{pt:CM:add}, \ref{pt:CM:trans} and \ref{pt:CM:one} when $u,v,w\in E$ and such that $P(u|v)\in C$ for those of the given elementary cylinders determined by some $(u, v)\in \sO$ and $C\se [0, 1]$ for which $u, v\in E$.

\smallskip

Since \ref{pt:CM:add} involves sums, we consider the finite set $E'= E+E \supseteq E$. In proving~$(*)$, it suffices to define $P$ on $\sO  \cap (E'\times E)$ because we can take $P$ arbitrary outside this set. We apply the hypothesis of the theorem to the finite set $F = E' \smallsetminus\{0\}$, obtaining a suitable chain $\sC_F \se \sC$.

Since $\sC_F$ is a chain, it follows (as in the proof of \Cref{thm:chain-VP}) that, given $v\in F$, there is a unique $(U, J)$ in $\sC_F$ with $v\in U$ and $J(v) \neq 0$; moreover, $J$ determines $U$ within $\sC_F$. Thus, we again denote this pair by $(U^v, J^v)$ or simply by $J^v$.

We now define $P$ on $\sO  \cap (E'\times E)$ by
\begin{equation*}
P(u|v) = J^v(u) / J^v(v).
\end{equation*}
This is well-defined because $u\in U^v$ and it ranges in $[0, 1]$ since $J^v(u) \leq J^v(v)\neq 0$. By construction, $P(u|v)\in C$ in case one of the elementary cylinders is given by this pair $(u,v)$.

Therefore, to complete the proof, it suffices to verify that $P$ satisfies \ref{pt:CM:add}, \ref{pt:CM:trans} and \ref{pt:CM:one} for all $u,v,w\in E$.  For \ref{pt:CM:add} and \ref{pt:CM:one}, this is by construction. For \ref{pt:CM:trans}, the argument given in the proof of \Cref{thm:chain-VP} applies without any change.
\end{proof}


\subsection{Invariant pricings}
We now apply \Cref{thm:PPF-VP} to obtain vector pricings that are invariant under a group or monoid action.

\begin{thm}\label{thm:invariant-VP}
Let $G$ be a monoid with a representation on an ordered vector space $V$. Suppose that for every non-zero positive vector $v \in V$, the $G$-invariant ideal $V_{G v}$ generated by the orbit $Gv$ admits a non-zero $G$-invariant positive linear functional.

Then $V$ admits a $G$-invariant vector pricing. 
\end{thm}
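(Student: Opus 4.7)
The plan is to apply Theorem~\ref{thm:PPF-VP} with the collection $\sC$ of all positive partial functionals $(U,J)$ on $V$ such that $U$ is a $G$-invariant ideal and $J$ is a non-zero $G$-invariant positive linear functional on $U$. By Example~\ref{exam:inv}, this $\sC$ automatically satisfies the elementary property $\pi$ of $G$-invariance. The hypothesis of the theorem supplies, for every $v \gneqq 0$, an element $(V_{Gv}, J) \in \sC$. What remains is the combinatorial condition of Theorem~\ref{thm:PPF-VP}: for every finite $F \subseteq V^+ \smallsetminus \{0\}$, we must produce a chain $\sC_F \subseteq \sC$ whose members collectively cover $F$.

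A key preliminary observation is an automatic non-vanishing: whenever $(V_{Gv}, J) \in \sC$, one has $J(v) > 0$. Indeed, if $J(v) = 0$, then $G$-invariance yields $J(gv)=0$ for all $g \in G$; by Lemma~\ref{lem:ideal}(ii) every $u \in V_{Gv}^+$ satisfies $u \leq g_1 v + \cdots + g_n v$ for some $g_i \in G$, forcing $J(u) = 0$; since $V_{Gv}$ has generating order (Lemma~\ref{lem:ideal}(iii)), this contradicts $J \neq 0$.

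Given $F = \{v_1, \ldots, v_n\}$, I would build the chain iteratively. Set $v^{(1)} = v_1 + \cdots + v_n$ and pick $(U_1, J_1) = (V_{Gv^{(1)}}, J_1) \in \sC$ via the hypothesis; let $S_1 = \{i : J_1(v_i) > 0\}$, which is non-empty by the key observation applied to $v^{(1)}$. If $S_1 \neq \{1, \ldots, n\}$, set $v^{(2)} = \sum_{i \notin S_1} v_i$, select $(U_2, J_2) = (V_{Gv^{(2)}}, J_2)$, and continue: at stage $k+1$, sum those $v_i$ not yet covered by $T_k := S_1 \cup \cdots \cup S_k$. For each $j \leq k$ and each $i \notin T_k$ one has $J_j(v_i) = 0$, and $G$-invariance combined with the domination estimate of Lemma~\ref{lem:ideal}(ii) then gives $J_j = 0$ on all of $U_{k+1}$; this is precisely $(U_{k+1}, J_{k+1}) \prec (U_j, J_j)$, so $\sC_F$ is a chain. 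Since each stage covers at least one new index, the process terminates in at most $n$ steps, with every $v_i \in F$ lying in some $U_k$ on which $J_k$ is non-zero.

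Invoking Theorem~\ref{thm:PPF-VP} then yields a $G$-invariant conditional mean on $V$, and Corollary~\ref{cor:def-inv} upgrades this to a $G$-invariant vector pricing. The main obstacle is precisely the chain construction: generic invariant functionals on different ideals need not be comparable under $\prec$, and the trick is to shrink the ideal at each stage to what remains uncovered, so that $G$-invariance forces each earlier $J_j$ to vanish on the smaller ideal and hence to fit into the chain.
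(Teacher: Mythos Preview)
Your proposal is correct and follows essentially the same route as the paper's proof. The only cosmetic difference is that you build the chain $\sC_F$ iteratively while the paper phrases the same construction as an induction on $|F|$; in both cases the key idea is to sum the not-yet-covered vectors, take the orbit ideal of that sum, and use $G$-invariance to force the earlier (larger) functionals to vanish on the later (smaller) ideals.
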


This statement contains \Cref{ithm:invariant-VP} from the Introduction.

\begin{proof}[Proof of \Cref{thm:invariant-VP}]
The strategy is to apply \Cref{thm:PPF-VP} for the elementary property $\pi$ of $G$-invariance, recalling from \Cref{exam:inv} that it is indeed elementary. This will establish the existence of a $G$-invariant conditional mean. The corresponding vector pricing constructed in \Cref{prop:VP:CM:equiv} will then be invariant by \Cref{cor:def-inv}.

\smallskip
We define a collection $\sC$ of partial positive functionals as follows. Given any non-zero $v\geq 0$, let $V_{G v}$ be the ideal generated by the orbit $G v$; this is a $G$-invariant ideal. By assumption, there exists a non-zero $G$-invariant positive linear map $J\colon V_{G v} \to \RR$. We note that $J(v)\neq 0$; indeed otherwise by invariance $J(G v) =\{0\}$ so that $J$ would vanish on the entire ideal $V_{G v}$.

We define $\sC$ to be the collection of all such pairs $(V_{G v}, J)$ as $v$ ranges over all non-zero elements of $V^+$ and $J$ is as above. By construction, $\sC$ satisfies property $\pi$.

It now remains only to verify the hypothesis of \Cref{thm:PPF-VP}. Let thus $F\se V^+$ be a finite set of non-zero positive vectors. We denote by $\Sigma F\se V^+$ the finite set of all sums of elements of $F$ (without repetitions). We shall prove by induction on the number of elements of $F$ that $\sC$ contains a chain $\sC_F$ such that:
\begin{enumerate}[label=\normalfont(\alph*)]
\item $\forall v\in F \ \exists (U, J)\in \sC_F :  \kern2mm v\in U$ and $J(v) \neq 0$,
\item $\forall (U, J)\in \sC_F  \ \exists u\in \Sigma F : \kern2mm U= V_{G u}$.
\end{enumerate}
This will complete the proof since point~(a) is the desired hypothesis. The base case where $F$ is empty holds with $\sC_F$ empty. For $F$ non-empty, consider the element $w=\sum_{v\in F} v$ of $\Sigma F$ and select some $(U, J)\in \sC$ with $U=V_{G w}$, $J(w)\neq 0$. We note that every $v\in F$ lies in $U$ and hence $J(v)$ makes sense; we define
\begin{equation*}
F' = \big\{ v\in F : J(v)=0\big\} \se F.
\end{equation*}
Since $J(w)\neq 0$, there is at least one $v\in F$ not in $F'$. Therefore, we can apply the induction hypothesis to $F'$ and obtain a corresponding chain $\sC_{F'}$. We now define $\sC_F = \sC_{F'} \cup \{(U,J)\}$.

By construction, both properties~(a) and~(b) hold for $\sC_F$. Thus we only need to justify that $\sC_F$ is a chain; we claim that $(U', J') \prec (U,J)$ holds for all  $(U', J') \in \sC_{F'}$. Indeed, by~(b) there is $u'\in \Sigma F'$ with $U'= V_{G u'}$. The definition of $F'$ implies $J(u')=0$ and by invariance $J(g u')=0$ follows for all $g\in G$. Therefore, $J(U')=0$, confirming the claim.
\end{proof}

We can now summarize our results for group invariance; the following statement contains \Cref{ithm:FPC-abstract} and in particular also \Cref{ithm:FPC}.

\begin{thm}\label{thm:FPC-abstract}
Given a group $G$, the following statements are equivalent.
\begin{enumerate}[label=\normalfont(\roman*)]
\item $G$ has the fixed-point property for cones.\label{pt:FPC-a:FPC}
\item $\ell^\infty(G)$ admits an invariant conditional mean. \label{pt:FPC-a:CM}
\item $\ell^\infty(G)$ admits an invariant vector pricing. \label{pt:FPC-a:VP}
\item Every order-bounded representation of $G$ on any non-singular ordered vector space admits an invariant conditional mean. \label{pt:FPC-a:CM-a}
\item Every order-bounded representation of $G$ on any non-singular ordered vector space admits an invariant vector pricing. \label{pt:FPC-a:VP-a}
\end{enumerate}
\end{thm}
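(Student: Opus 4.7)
The equivalences (ii)$\Leftrightarrow$(iii) and (iv)$\Leftrightarrow$(v) are immediate from \Cref{prop:VP:CM:equiv} and \Cref{cor:def-inv}, which translate vector pricings into conditional means in a $G$-equivariant way. The implication (v)$\Rightarrow$(iii) is also straightforward, since the left-regular representation of $G$ on $\ell^\infty(G)$ is non-singular (point evaluations are separating positive functionals) and order-bounded ($\|f\|_\infty\one$ dominates every orbit $G f$). Hence it suffices to establish (i)$\Rightarrow$(v) and (iii)$\Rightarrow$(i).

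For (i)$\Rightarrow$(v), the plan is to apply \Cref{thm:invariant-VP}: given $v\gneqq 0$ in $V$, I seek a non-zero $G$-invariant positive linear functional on the $G$-invariant ideal $V_{Gv}$. I work in the algebraic dual $E := V_{Gv}^*$ with its weak-$*$ topology, on the positive dual cone $C := (V_{Gv}^*)^+$: by \Cref{lem:dual-complete} this cone is weakly complete, and by \Cref{lem:ideal}\ref{pt:ideal:generate} it is proper, carrying a dual $G$-action. For \emph{local boundedness} I note that non-singularity of $V$ provides a positive linear functional $J$ on $V$ with $J(v)\neq 0$; writing $J_0:=J|_{V_{Gv}}\in C\setminus\{0\}$, order-boundedness applied to $u$ and to $-u$ yields $w_\pm\in V$ with $w_-\leq gu\leq w_+$ for all $g\in G$, so $\sup_g|J_0(g^{-1}u)|=\sup_g|J(g^{-1}u)|\leq\max\{|J(w_+)|,|J(w_-)|\}<\infty$, confirming weak-$*$ boundedness of $GJ_0$. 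For \emph{coboundedness} the topological dual of $E$ in the weak-$*$ topology is $V_{Gv}$ itself, and \Cref{lem:ideal}\ref{pt:ideal:sums} is precisely the statement that every $u\in V_{Gv}$ satisfies $\pm u\leq\sum_{i=1}^n g_iv$ in $V_{Gv}$; hence $v$ itself cobounds. The fixed-point property for cones then delivers a non-zero $G$-fixed point in $C$, as required.

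For (iii)$\Rightarrow$(i), let $r$ be a $G$-invariant vector pricing on $\ell^\infty(G)$ and let $E,C,\varphi\in E',\xi_0\in C\setminus\{0\}$ be the data of an arbitrary instance of the fixed-point problem for cones. The plan is to pass to the matrix-coefficient functions $f_\lambda\in\ell^\infty(G)$ defined by $f_\lambda(g):=\lambda(g\xi_0)$ (bounded by local boundedness of $G\xi_0$ in $E$). Coboundedness translates into the pointwise inequality $|f_\lambda|\leq\sum_i g_if_\varphi$ on $G$, and because $r$ is $G$-invariant, $r(\sum_i g_if_\varphi,f_\varphi)\leq n$, which places each $f_\lambda$ in the ideal $\finite(f_\varphi)$ where $r(\cdot,f_\varphi)$ extends by \Cref{lem:VP:properties}\ref{pt:VP:extends} to a $G$-invariant positive linear functional. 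Setting
\[
\Phi(\lambda) \;:=\; r(f_\lambda,f_\varphi), \kern3mm \lambda\in E',
\]
one obtains a linear functional on $E'$ (by linearity of $\lambda\mapsto f_\lambda$ and of the extended $r(\cdot,f_\varphi)$) which is $G$-invariant (since $f_{g\lambda}=gf_\lambda$ and $r$ is $G$-invariant) and non-zero (since $\Phi(\varphi)=1$).

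The principal obstacle will be to realize $\Phi$ as evaluation at some non-zero $\xi^*\in C$, which would furnish the required $G$-fixed point. Since $\lambda\in C^\circ$ forces $f_\lambda\geq 0$ on $G$, $\Phi$ is non-negative on the polar cone $C^\circ\subseteq E'$. The plan is to exploit this positivity together with weak completeness of $C$: the bipolar theorem identifies $C$ with $\{\xi\in E:\lambda(\xi)\geq 0\ \forall\lambda\in C^\circ\}$, and a standard representation/approximation argument (expressing the values $\Phi|_F$ on finite subsets $F\subseteq E'$ by evaluations at convex combinations of orbit elements $g\xi_0$, which lie in $C$, and extracting a weak limit in the weakly complete $C$) will yield $\xi^*\in C$ with $\lambda(\xi^*)=\Phi(\lambda)$ for all $\lambda$. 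Ensuring that this approximation step succeeds---that $\Phi$ is indeed $\sigma(E',E)$-continuous---is the technical heart of the argument, where weak completeness of $C$ and the coboundedness bound $|f_\lambda|\leq\sum_ig_if_\varphi$ must interact correctly.
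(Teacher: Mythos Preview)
Your reductions (ii)$\Leftrightarrow$(iii), (iv)$\Leftrightarrow$(v), (v)$\Rightarrow$(iii) and your proof of (i)$\Rightarrow$(v) via \Cref{thm:invariant-VP} and the dual cone in $(V_{Gv})^*$ are correct and coincide with the paper's argument almost verbatim.

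The divergence is in (iii)$\Rightarrow$(i). The paper does not attempt to construct a fixed point in an arbitrary cone directly; instead it invokes Theorem~7 of~\cite{Monod_cones}, which characterizes the fixed-point property for cones by: for every $f\gneqq 0$ in $\ell^\infty(G)$, the orbit ideal $V_{Gf}$ admits a $G$-invariant positive linear functional $J$ with $J(f)=1$. Given an invariant vector pricing $r$, one then simply takes $J=r(\cdot,f)$; this is defined on $\finite(f)\supseteq V_{Gf}$ since invariance gives $r(gf,f)=1<\infty$, and it is $G$-invariant by hypothesis. That is the entire argument.

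Your route instead tries to manufacture the fixed point $\xi^*$ from the matrix coefficients $f_\lambda$, which amounts to re-deriving the cited equivalence from~\cite{Monod_cones}. You correctly isolate the crux --- that $\Phi(\lambda)=r(f_\lambda,f_\varphi)$ must be $\sigma(E',E)$-continuous, i.e.\ represented by an element of $E$ --- but you do not establish it. The sketch via ``convex combinations of orbit elements $g\xi_0$ and a weak limit in the weakly complete $C$'' is on the right track, yet it is not a routine step: one must argue that the positive functional $r(\cdot,f_\varphi)$ on the orbit ideal lies in the weak-$*$ closure of finitely supported non-negative measures on $G$, build from this a weakly Cauchy net in $C$, and then invoke weak completeness. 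As written, this is a gap rather than a proof. There is also a smaller issue upstream: you need $f_\varphi\geq 0$ for $r(\cdot,f_\varphi)$ even to make sense, i.e.\ $\varphi$ must lie in the dual cone $C^\circ$, which is not part of the coboundedness hypothesis as stated and requires a separate justification. The paper's appeal to~\cite{Monod_cones} bypasses all of this.
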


Here a representation is called \textbf{order-bounded} if every $G$-orbit admits some upper bound (equivalently, some lower bound). Explicitly:
\begin{equation*}
\forall v\in V\ \exists w\in V \ \forall g\in G: \kern3mm g v \leq w.
\end{equation*}
In fact we will only need this condition for $v\in V^+$.

\begin{proof}[Proof of \Cref{thm:FPC-abstract}]
The main implication is \ref{pt:FPC-a:FPC}$\Longrightarrow$\ref{pt:FPC-a:VP-a}. Let $V$ be a non-singular ordered vector space and endowed with an order-bounded representation of $G$. We can assume that the order is generating upon possibly replacing $V$ by its subspace $V^+ - V^+$. Indeed, none of the assumptions is affected by this change and neither is the conclusion. We fix a non-zero positive vector $v \in V$ and proceed to establish the hypothesis of \Cref{thm:invariant-VP}.

We let $E=(V_{G v})^*$ be the algebraic dual of $V_{G v}$, endowed with the weak-* topology. The dual positive cone $E^+$ is a proper cone by \Cref{lem:ideal}\ref{pt:ideal:generate}. We claim that the dual $G$-action on $E^+$ has all the properties required to apply the fixed-point property for cones, which will then finish the proof.

First, the cone $E^+$ is weakly complete, see \Cref{lem:dual-complete}. Secondly, the ``coboundedness'' condition states that the topological dual $E'$ of the locally convex space $E$ should contain a continuous linear functional $\varphi\colon E \to \RR$ such that any other $\lambda\in E'$ can be bounded by some sum of $G$-translates of $\varphi$. We recall that the topological dual of $E$ is canonically identified with $V_{G v}$ endowed with its finest locally convex topology (\cite[3.14]{RudinFA} and~\cite[II \S\,6 No.\,1)]{BourbakiEVT81}). Thus we can take $\varphi=v$ and the boundedness condition for $\lambda\in E'$ now holds by definition of the ideal $V_{G v}$ as expressed in \Cref{lem:ideal}\ref{pt:ideal:sums}.

Finally, the ``local boundedness'' condition postulates the existence of a non-zero element $x\in E^+$ whose orbit is bounded in the sense of topological vector spaces. Since the representation is order-bounded, there is $w\in V$ with $g v\leq w$ for all $g\in G$. Since the order is non-singular, we can find a positive linear functional $K\colon V\to\RR$ with $K(v)\neq 0$. Now we claim that the restriction of $K$ to $V_{G v}$ is the desired element $x\in E^+$. Indeed, weak-* boundedness of $G x$ means that any basic neighbourhood of~$0$ of the form
\begin{equation*}
U=\big\{ y\in E : |y(v_i)|<\epsilon \ \forall i \big\} \kern3mm (v_1, \ldots, v_n\in V_{G v}, \epsilon >0)
\end{equation*}
we must absorb $Gx$, i.e. $G x \se t U$ must hold for all sufficiently large $t>0$. Given such $v_i$ and $\epsilon$, we can find for each $i$ elements $g_{i,j}\in G$ with $j=1, \ldots, k_i$ and $\pm v_i \leq \sum_{j=1}^{k_i} g_{i,j}v$ because $v_i\in  V_{G v}$ (again \Cref{lem:ideal}\ref{pt:ideal:sums}). Now
\begin{equation*}
\pm (gx)(v_i) =\pm (g K) (v_i) = K(\pm g\inv v_i) \leq K\left(\sum_{j=1}^{k_i} g\inv g_{i,j}v\right) \leq k_i \, K(w).
\end{equation*}
This shows that any $t > \max(k_1, \ldots , k_n) \, K(w) / \epsilon$ witnesses absorption and completes the proof of this implication.

\medskip
\ref{pt:FPC-a:VP-a}$\Longrightarrow$\ref{pt:FPC-a:CM-a} and \ref{pt:FPC-a:VP}$\Longrightarrow$\ref{pt:FPC-a:CM} hold since any vector pricing gives in particular a conditional mean. Both implications admit a converse: \Cref{prop:VP:CM:equiv} constructs a vector pricing from a conditional mean and \Cref{cor:def-inv} shows that invariance is preserved. Moreover, \ref{pt:FPC-a:VP} is a special case of \ref{pt:FPC-a:VP-a}.

\smallskip
In conclusion, it remains only to justify \ref{pt:FPC-a:VP}$\Longrightarrow$\ref{pt:FPC-a:FPC}. To that end, we recall that by Theorem~7 in~\cite{Monod_cones}, the fixed point-property for cones is equivalent to the following statement. For every non-zero $f\geq 0$ in $\ell^\infty(G)$, there is an invariant positive linear functional $J$ on the space
\begin{equation*}
\left\{ h \in \ell^\infty(G) : \exists g_1, \ldots, g_n\in G \ \text{ with } \ \pm h \leq \sum_{i=1}^n g_i\, f \right\}
\end{equation*}
normalized by $J(f)=1$. The above space is precisely the ideal generated by the orbit of $f$, see \Cref{lem:ideal}\ref{pt:ideal:sums}. Therefore, given an invariant vector pricing $r$ on $\ell^\infty(G)$, we obtain $J=r(\cdot, f)$.
\end{proof}

\subsection{Equivariance}
\Cref{iprop:indicable} is about finitely generated groups $G$ without non-zero epimorphism to $\ZZ$. We shall adapt it to arbitrary groups $G$, not necessarily finitely generated, replacing the assumption on epimorphisms $G\twoheadrightarrow\ZZ$ by the following:

$(*)$\itshape There is a finite subset $F\se G$ such that every homomorphism $G_1\to\ZZ$ defined on any subgroup $G_1<G$ containing $F$ is zero.

\smallskip\upshape
This more cumbersome hypothesis is equivalent to the earlier assumption when $G$ is finitely generated. Therefore, the following statement contains \Cref{iprop:indicable}.

\begin{prop}\label{prop:indicable}
Let $G$ be a group satisfying $(*)$.

Then every equivariant vector pricing on $\ell^\infty(G)$ is invariant. 

Likewise for conditional means.
\end{prop}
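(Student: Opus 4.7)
The plan is to extract a nontrivial homomorphism $G_1 \to \ZZ$ from some subgroup $G_1 \leq G$ containing $F$ whenever $r$ fails to be invariant, thereby contradicting $(*)$. By Lemma~\ref{lem:def-inv}, invariance amounts to $r(u, gu) = 1$ for every $u \in V^+ \setminus \{0\}$ and $g \in G$, so this is what I will target.

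The main algebraic input combines the cocycle axiom \ref{pt:VP:cocycle} with equivariance:
\[
r(u, ghu) \;=\; r(u, gu)\, r(gu, ghu) \;=\; r(u, gu)\, r(u, hu)
\]
whenever the product on the right is defined. Hence $\phi_u(g) := \log r(u, gu)$ makes the set $H_u := \{g \in G : r(u, gu) \in (0, \infty)\}$ into a subgroup of $G$, and $\phi_u|_{H_u} \colon H_u \to \RR$ is a group homomorphism.

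The privileged role of $\ell^\infty(G)$ now enters through the $G$-invariant positive vector $\one$. If $\epsilon \one \leq u$ for some $\epsilon > 0$, then $gu \geq \epsilon \one$ and so $u \leq (\|u\|_\infty/\epsilon) gu$; by \ref{pt:VP:monotone} and \ref{pt:VP:hom} this gives $\epsilon/\|u\|_\infty \leq r(u, gu) \leq \|u\|_\infty/\epsilon$, so $\phi_u \colon G \to \RR$ is a bounded homomorphism and hence identically zero. Thus every strictly positive bounded $u$ is automatically invariant, with no hypothesis yet imposed on $G$. For arbitrary $u \in V^+$, applying this to $u_\epsilon := u + \epsilon \one$ and expanding $r(u_\epsilon, gu_\epsilon) = 1$ by linearity in the first slot, together with the equivariance consequence $r(gu, \one) = r(u, \one)$ (which uses $g\one=\one$), a short computation yields
\[
\frac{r(u, gu)\, r(u, \one) + \epsilon}{r(u, \one) + \epsilon} \;=\; 1 \qquad (\epsilon > 0),
\]
which forces $r(u, gu) = 1$ whenever $r(u, \one) > 0$. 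Consequently any counterexample must come from a ``null'' vector $u$ with $r(u, \one) = 0$.

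Finally, suppose such a null $u$ satisfies $\phi_u(g_0) \neq 0$ for some $g_0 \in H_u$. After replacing $u$ by an appropriate orbit sum (for instance $u' = \sum_{f \in F \cup F^{-1} \cup \{e\}} fu$) chosen so that $F \subseteq H_u$ and $\phi_u(g_0) \neq 0$ are both maintained, the finitely generated subgroup $D := \langle \phi_u(F) \cup \{\phi_u(g_0)\}\rangle \leq \RR$ is free abelian of some rank $d \geq 1$. Then $G_1 := \phi_u^{-1}(D)$ is a subgroup of $G$ containing $F$ and $g_0$, and $\phi_u|_{G_1} \colon G_1 \to D \cong \ZZ^d$ composed with a coordinate projection $D \twoheadrightarrow \ZZ$ that is nonzero on $\phi_u(g_0)$ yields a nonzero homomorphism $G_1 \to \ZZ$, contradicting $(*)$. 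The cases $r(u, g_0 u) \in \{0, \infty\}$ reduce to the same scheme by tracking how such values propagate through orbit sums. The statement for conditional means follows via Corollary~\ref{cor:def-inv}. The main technical hurdle is the modification of $u$ in this last step: one must arrange $F \subseteq H_u$ without losing the witness $g_0$ to non-invariance, which is the analogue in the present setting of extending from a single element to the full system $F$ in the Pruss-type argument.
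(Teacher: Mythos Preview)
Your steps through the null-vector reduction are correct: the cocycle-plus-equivariance identity does make $H_u$ a subgroup on which $\phi_u$ is a homomorphism, and the $u_\epsilon$ computation does force $r(u,gu)=1$ whenever $r(u,\one)>0$. However, the final step has a genuine gap that you yourself flag as ``the main technical hurdle.'' The orbit-sum modification does not do what you need. For instance, if $r(u, g_0 u) = 0$ then a direct computation with \ref{pt:VP:add}, \ref{pt:VP:cocycle} and equivariance gives
\[
r(u + g_0 u,\ g_0 u + g_0^2 u) \;=\; r(u+g_0 u,\, g_0 u)\cdot r(u,\, u+g_0 u) \;=\; 1\cdot 0 \;=\; 0,
\]
so the values $0$ and $\infty$ persist under such sums and one cannot arrange $F \cup \{g_0\} \subseteq H_{u'}$ this way. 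Your claim that the cases $r(u, g_0 u) \in \{0,\infty\}$ ``reduce to the same scheme'' is unjustified, and I do not see how to salvage the real-valued-homomorphism approach there: once $\log r(u,gu)$ takes the value $\pm\infty$, you no longer have a homomorphism to $\RR$, only a total pre-order on $G$.

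The paper's argument is structurally different and is designed precisely to absorb these $0,\infty$ values. It works with the semigroup $S = \{g : r(u,gu) \in [1,+\infty]\}$; by \ref{pt:VP:inv} one has $S \cup S^{-1} = G$, so $S$ induces a total left-invariant pre-order on $G$. If $S \cap S^{-1} \neq G$, the quotient by the normal core of $S\cap S^{-1}$ is a nontrivial left-ordered group. Crucially, the paper first observes that $r(\cdot,\one_G)$ is a $G$-invariant mean, so $G$ is amenable; it then invokes Witte--Morris's theorem that amenable left-orderable groups are locally indicable to obtain a nonzero homomorphism $G_1 \to \ZZ$ from a finitely generated $G_1 \supseteq F$, contradicting $(*)$. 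This passage through left-orderability and the Witte--Morris theorem is the ingredient your proposal is missing.
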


\begin{proof}
Let $r$ be an equivariant vector pricing on $\ell^\infty(G)$. Since $\one_G$ is an invariant vector, the map $u\mapsto r(u, \one_G)$ is invariant. On the other hand, the ideal generated by $\one_G$ is all of $\ell^\infty(G)$, so that in view of \ref{pt:VP:extends} we have obtained an invariant mean. In other words, $G$ is amenable.

We now fix an arbitrary $u\gneqq 0$ in $\ell^\infty(G)$ and proceed to show that $r(u, g u)=1$ holds for all $g\in G$, which is the invariance criterion~\ref{pt:def-inv:1} from~\Cref{lem:def-inv}. Consider
\begin{equation*}
S = \big\{ \, g\in G : r(u, g u) \in[1, +\infty]\, \big\} \se G.
\end{equation*}
Given $g,h\in S$ we have
\begin{equation*}
r(u, gh u) = r(u, g u)\, r(g u, gh u) = r(u, g u)\, r(u, h u) \geq 1
\end{equation*}
and thus $S$ is a monoid. We need to show that the subgroup $S \cap S\inv$ coincides with $G$. If not, then the normal subgroup
\begin{equation*}
N = \bigcap_{g\in G} g \big( S \cap S\inv \big) g\inv
\end{equation*}
is a proper normal subgroup of $G$. The monoid $S$ defines a left-invariant pre-order $\sqsubseteq$ on $G$ by setting $x\sqsubseteq y\Leftrightarrow x\inv y\in S$. This pre-order is total because \ref{pt:VP:inv} implies $S \cup S\inv=G$. It follows that the quotient $G/N$ admits a (total, left-invariant) order, see for instance~\cite[Ex.~1.1.8]{Deroin-Navas-rivas_to-appear}. Thus $G/N$ is a non-trivial amenable left-orderable group. By Theorem~B of~\cite{Witte_amen}, it follows that $G/N$ is locally indicable. This means by definition that every non-trivial finitely generated subgroup of $G/N$ admits a non-trivial homomorphism to $\ZZ$. We apply this to the image $G_1/(G_1\cap N)$ in $G/N$ of the subgroup $G_1<G$ generated by $F\cup\{g_1\}$, where $g_1\in G$ is any element outside $N$. It follows that there is a non-trivial homomorphism $G_1\to\ZZ$, which is absurd.

If instead we start with an equivariant conditional mean $P$, it suffices to observe that the formula in \Cref{prop:VP:CM:equiv} defines an equivariant vector pricing.
\end{proof}

\section{Stationarity}\label{sec:stat}
Let $G$ be a group and let $\mu$ be a probability distribution on $G$. We write $\mu^{*n} = \mu * \cdots * \mu$ for the $n$-fold convolution and recall that the \textbf{spectral radius} of $\mu$ is defined as
\begin{equation*}
\varrho(\mu) = \limsup_{n\to+\infty} \big( \mu^{*n} (e) \big)^{\frac1n}\ \leq 1.
\end{equation*}
The distribution $\mu$ is called \textbf{symmetric} if $\mu(g)=\mu(g\inv)$ for all $g\in G$. In that case, Kesten's criterion~\cite{Kesten59,Kesten59bis} states that $\varrho(\mu)=1$ holds if and only if the subgroup generated by the support of $\mu$ is amenable.

\medskip
Consider now any $G$-representation on an ordered vector space $V$. If $\mu$ is finitely supported, then it defines a positive linear map on $V$ by
\begin{equation*}
\mu * v = \sum_{g\in G} \mu(g) \, g v \kern3mm (v\in V).
\end{equation*}
We denote its orbits by $\langle \mu \rangle v = \{\mu^{*n} * v : n\in\NN\}$ and observe that the orbit ideal $V_{\langle \mu \rangle v}$ is contained in the $G$-invariant ideal $V_{G v}$ whenever $v\in V^+$. In particular, if the $G$-representation is order-bounded, then $\mu$ has order-bounded orbits. Moreover, if $\mu$ is non-degenerate, then we have $V_{\langle \mu \rangle v}  =V_{G v}$ because every $g v$ is in $V_{\mu^{*n} v}$ when $n$ is large enough relative to $g\in G$.

In the special case of $V=\ell^\infty(G)$ with the usual $G$-representation, all these definitions and facts hold more generally without assuming $\mu$ finitely supported. In particular, $\mu * v$ is well-defined, has order-bounded orbits and $V_{\langle \mu \rangle v}  \supseteq V_{G v}$ holds for $\mu$ non-degenerate.

\medskip
In~\cite{Monod_eigenform_pre}, we establish the following ``eigenfunctional theorem'', which is an abstract counterpart to the fundamental Krein--Rutman eigenvector theorem~\cite{Krein-Rutman}, itself an infinite-dimensional version of the Perron--Frobenius principle~\cite{Perron,Frobenius1912}.

\begin{thm}[\cite{Monod_eigenform_pre}]\label{thm:KR}
Let $V$ be a non-singular ordered vector space and $T\colon V\to V$ any positive linear map.

For every $v\gneqq 0$ with order-bounded orbit $\langle T\rangle v$ there is a positive linear functional $J$ on the orbit ideal $V_{\langle T\rangle v}$ and a scalar $t \geq 0$ such that
\begin{equation*}
J(v)=1 \kern3mm \text{and} \kern3mm \forall u\in V_{\langle T\rangle v} : \kern3mm J(T u) = t\, J(u).
\end{equation*}
Moreover, we can choose $J$ such that $t\leq 1$.\qed
\end{thm}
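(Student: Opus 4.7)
The plan is to work inside the orbit ideal $W := V_{\langle T\rangle v}$, which is $T$-invariant since $T(T^n v) = T^{n+1}v$ lies in $W$. By Lemma~\ref{lem:ideal}\ref{pt:ideal:generate}, the induced order on $W$ is generating, so the dual cone $(W^*)^+$ in the algebraic dual $W^*$ is proper and, by Lemma~\ref{lem:dual-complete}, weak-* complete. The key input from non-singularity of $V$ is a positive linear functional $\phi \colon V \to \RR$ with $\phi(v) = 1$; fixing an order-bound $w \in V^+$ for the orbit $\{T^n v\}$, the restriction $J_0 := \phi|_W$ satisfies $J_0(v) = 1$ together with the crucial uniform bound $J_0(T^n v) \leq \phi(w) =: M$ for all $n$. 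Lemma~\ref{lem:ideal}\ref{pt:ideal:sums} then propagates this into a bound $|J_0(u)| \leq \ell_u M$ for any $u \in W$ satisfying $\pm u \leq \sum_{i=1}^{\ell_u} T^{n_i}v$.

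The first attempt is to extract an eigenfunctional from a weak-* accumulation point of Ces{\`a}ro averages. Set $J_n := \frac{1}{n}\sum_{k=0}^{n-1}(T^*)^k J_0$. Since $\pm T^k u \leq \sum_i T^{k+n_i} v$, the identity $((T^*)^k J_0)(u) = J_0(T^k u)$ yields the same uniform bound $|((T^*)^k J_0)(u)| \leq \ell_u M$, so the sequence $(J_n)$ sits in a weak-* compact box, and Tychonoff furnishes an accumulation point $J_* \in (W^*)^+$. The telescoping identity
\begin{equation*}
T^* J_n - J_n = \frac{1}{n}\bigl((T^*)^n J_0 - J_0\bigr)
\end{equation*}
goes to zero pointwise on $W$ by the same bound, whence $T^* J_* = J_*$; provided $J_*(v) > 0$, renormalizing by $J_*(v)$ yields an eigenfunctional with $t = 1$.

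The main obstacle is that $J_*(v)$ may vanish, in which case $T$-invariance of $J_*$ propagates the zero to all of $W$ and the Ces{\`a}ro limit carries no information. This degenerate regime corresponds precisely to the situation $\rho := \limsup_n J_0(T^n v)^{1/n} < 1$. To handle it I would pass to the resolvent $J^{(s)}(u) := \sum_{n \geq 0} s^n J_0(T^n u)$, convergent for $s < 1/\rho$ and satisfying the identity $T^* J^{(s)} = s^{-1}(J^{(s)} - J_0)$. Renormalizing to $\widehat J^{(s)} := J^{(s)}/J^{(s)}(v)$ and letting $s \uparrow 1/\rho$ (so that $J^{(s)}(v)\to\infty$), a second weak-* compactness argument produces in the limit a non-zero positive linear functional $J$ on $W$ with $J(v) = 1$ and $T^* J = \rho J$, i.e.\ an eigenfunctional with eigenvalue $t = \rho \leq 1$. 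The technical crux will be establishing uniform weak-* bounds on the family $\widehat J^{(s)}$ that allow Tychonoff to be applied and ensure the limit is non-degenerate; I expect both to follow from the resolvent identity together with the rate at which $J^{(s)}(v)$ diverges as $s\uparrow 1/\rho$, which controls the cancellation in $T^* \widehat J^{(s)} - s^{-1}\widehat J^{(s)}$.
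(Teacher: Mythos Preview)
The paper does not prove this theorem: it is quoted from the companion article~\cite{Monod_eigenform_pre} and marked with a \qed. So there is no ``paper's own proof'' to compare against; your proposal must stand on its own.

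Your outline has the right architecture (work in the orbit ideal, use the dual cone with weak-* compactness, exploit order-boundedness to get uniform bounds), but the two-case split via $\rho := \limsup_n J_0(T^n v)^{1/n}$ is not a genuine dichotomy and both branches have real gaps. First, the degenerate Ces{\`a}ro regime $J_*(v)=0$ is \emph{not} equivalent to $\rho<1$: if for instance $J_0(T^n v) = 1/(n+1)$ then $\rho=1$ yet the Ces{\`a}ro averages of $J_0(T^k v)$ still tend to~$0$, so $J_*=0$. Second, in the resolvent branch your argument hinges on $J^{(s)}(v)\to\infty$ as $s\uparrow 1/\rho$, but this can fail: take $J_0(T^n v)=\rho^n/n^2$, so that the series converges at the critical parameter. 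When $J^{(s)}(v)$ stays bounded, the error term $J_0/J^{(s)}(v)$ in $T^*\widehat J^{(s)} = s^{-1}\widehat J^{(s)} - s^{-1}J_0/J^{(s)}(v)$ does not vanish and no eigenfunctional emerges in the limit. A further edge case is $\rho=0$ (e.g.\ $J_0(T^n v)=0$ for all $n\geq 1$), where $1/\rho=+\infty$ and the normalized resolvents need not remain bounded.

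These are not merely technical wrinkles: the resolvent-at-the-spectral-radius heuristic you are invoking is what drives the classical Krein--Rutman theorem, but there one has compactness of $T$ (or at least some spectral gap) to force divergence at the critical parameter. In the present purely order-theoretic setting no such mechanism is available, and the cited paper~\cite{Monod_eigenform_pre} presumably supplies a different device to produce the eigenfunctional. Your uniform bound $\widehat J^{(s)}(T^m v)\leq s^{-m}$ is correct and useful, but it does not by itself rescue the argument when the critical series converges.
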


In our current setting, we can deduce from this result a statement that is ready to be combined with Kesten's criterion, as follows.

\begin{cor}\label{cor:KR-for-mu}
Let $\mu$ be a finitely supported probability distribution on a group $G$ and let $V$ be a non-singular ordered vector space with an order-bounded $G$-representation.

For every $v\gneqq 0$ there is $t\leq 1$ and a positive linear functional $J$ on $V_{\langle \mu \rangle v}$ such that
\begin{equation*}
J(v)=1 \kern3mm \text{and} \kern3mm \forall u\in V_{\langle \mu \rangle v} : \kern3mm J(\mu * u) = t\, J(u).
\end{equation*}
If $\mu$ is symmetric, then moreover $t\geq  \varrho(\mu)$.

For $V=\ell^\infty(G)$ all this holds without the finite support assumption.
\end{cor}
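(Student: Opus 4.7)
The plan is to apply the eigenfunctional theorem (\Cref{thm:KR}) to the positive linear operator $T\colon V\to V$ defined by $Tu = \mu * u$. When $\mu$ is finitely supported, $T$ is a finite non-negative combination of the positive operators associated with the elements in the support of $\mu$, hence a well-defined positive linear map. For $V=\ell^\infty(G)$ with arbitrary $\mu$, the convolution $\mu * f$ is well-defined as a bounded function on $G$ and defines a positive linear operator. In both cases, $V$ is non-singular by hypothesis.

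Next I will verify the order-boundedness of the orbit $\langle T\rangle v = \{\mu^{*n}*v : n\in\NN\}$, which is exactly the hypothesis required to apply \Cref{thm:KR} at $v$. Since the $G$-representation is order-bounded, there exists $w\in V$ with $gv\leq w$ for every $g\in G$; as each $\mu^{*n}$ is a probability distribution supported in the finitely-generated sub-semigroup spanned by the support of $\mu$, this gives
\[
\mu^{*n}*v = \sum_{g\in G}\mu^{*n}(g)\, gv \leq w.
\]
In the case $V = \ell^\infty(G)$, the same role is played by the bound $\mu^{*n}*f\leq \|f\|_\infty\one_G$. As recalled immediately before the statement of the corollary, $V_{\langle T\rangle v} = V_{\langle \mu\rangle v}$. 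Invoking \Cref{thm:KR} therefore yields a scalar $t\leq 1$ and a positive linear functional $J$ on $V_{\langle \mu\rangle v}$ with $J(v)=1$ and $J(Tu) = tJ(u)$ for every $u$ in this ideal, which is the desired eigenrelation.

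For the lower bound on $t$, the key observation is that since every $g\in G$ acts by a positive operator and $v\geq 0$, each term in the sum defining $\mu^{*n}*v$ is positive, so
\[
\mu^{*n}*v = \sum_{g\in G}\mu^{*n}(g)\, gv \;\geq\; \mu^{*n}(e)\, v.
\]
Iterating the eigen-identity gives $J(\mu^{*n}*v) = t^n$, so applying the positive linear functional $J$ to the displayed inequality produces $t^n \geq \mu^{*n}(e)$ for every $n$, whence $t\geq \mu^{*n}(e)^{1/n}$ and, after taking the limsup, $t\geq \varrho(\mu)$.

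I do not anticipate a genuine obstacle here; the only small point worth flagging is that the inequality $t\geq \varrho(\mu)$ is actually obtained without invoking the symmetry of $\mu$, but symmetry is retained in the statement because it is precisely that hypothesis which, combined with Kesten's criterion, will later force $\varrho(\mu)=1$ and hence $t=1$ in the downstream application to stationary conditional means on amenable groups.
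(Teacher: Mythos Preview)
Your proof is correct, and for the lower bound $t\geq\varrho(\mu)$ it is actually simpler and more general than the paper's argument. The paper introduces the auxiliary function $h(g)=J(gv)$ on $G$, uses the symmetry of $\mu$ to verify that $\mu*h=t\,h$, and then deduces $\mu^{*n}(e)\leq t^n$ from $(\mu^{*n}*h)(e)=t^n h(e)$. Your route bypasses $h$ entirely: the pointwise inequality $\mu^{*n}*v\geq \mu^{*n}(e)\,v$ (dropping all but the identity term from a sum of positives) together with the iterated eigenrelation $J(\mu^{*n}*v)=t^n$ gives $t^n\geq\mu^{*n}(e)$ directly, and indeed without any appeal to symmetry. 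This also spares you the paper's extra work in the $\ell^\infty(G)$ case, where $h$ is a priori unbounded and the convolution $\mu*h$ needs to be justified via partial sums.

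What the paper's longer route buys is the harmonic function $h$ itself: the relation $\mu*h=t\,h$ is reused in a later remark to observe that amenable groups without the fixed-point property for cones carry non-constant positive $\mu$-harmonic functions. Your argument does not produce that byproduct, but for the corollary as stated it is the cleaner proof.
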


\begin{proof}
Start with the finitely supported case for general $V$. Then \Cref{thm:KR} gives everything except the bound $t\geq \varrho(\mu)$ for $\mu$ symmetric. In that case, we can suppose $\mu$ non-degenerate upon replacing $G$ by the subgroup generated by the support (which does not change the statement). Consider the non-negative function
\begin{equation*}
h\colon G \lra \RR^+, \kern3mm h(g) = J(g v) \kern3mm (g\in G)
\end{equation*}
which is well-defined since now $g v\in  V_{\langle \mu \rangle v}$. Since $\mu$ is symmetric, we obtain for all $g\in G$
\begin{multline*}
(\mu * h)(g) = \sum_{s\in G} \mu(s) \, h(s\inv g) =\sum_{s\in G} \mu(s) \, h(s g) =\\
=\sum_{s\in G} \mu(s)  J(s g v) =J\left(\sum_{s\in G} \mu(s)  s g v \right) = J(\mu *(g v)) = t \, h(g).
\end{multline*}
(All the above sums are finite.) Thus $\mu * h = t h$ and it follows $\mu^{* n}*h = t^n h$ for all $n\in \NN$. In particular, using $h\geq 0$ and $h(e)=1$,
\begin{equation*}
\mu^{* n} (e) = \mu^{* n} (e) \, h(e) \leq (\mu^{* n} *h)(e) = t^n \, h(e) = t^n. 
\end{equation*}
We deduce $t\geq \varrho(\mu)$ as desired.

\smallskip
When $V=\ell^\infty(G)$ and $\mu$ is not necessarily finitely supported, the point needing additional justification is $\mu * h = t h$, including the fact that the left-hand side is a convergent sum (a priori $h$ is unbounded). In fact we only used $\mu * h \leq t h$ and therefore we obtain everything we need if we show that for every finite set $F\se G$ we can bound the partial sum by
\begin{equation*}
\sum_{s\in F} \mu(s) \, h(s\inv g) \leq  t \, h(g).
\end{equation*}
This, in turn, follows from the positivity of $J$ and of $v$:
\begin{multline*}
\sum_{s\in F} \mu(s) \, h(s\inv g) =\sum_{s\in F\inv } \mu(s) \, h(s g) =\sum_{s\in F\inv} \mu(s)  J(s g v) =\\
=J\left(\sum_{s\in F\inv} \mu(s)  s g v \right) \leq J\left(\sum_{s\in G} \mu(s)  s g v \right) = J(\mu *(g v)) = t \, h(g).
\end{multline*}
\end{proof}

At this point we can already prove the existence of a stationary vector pricing for general representations of amenable groups, as stated in the introduction.

\begin{proof}[Proof of \Cref{ithm:stat-abstract}]
We are given a finitely supported symmetric probability distribution $\mu$ on an amenable group $G$ and an order-bounded $G$-representation on some non-singular ordered vector space $V$. The statement is that $V$ admits a $\mu$-stationary vector pricing.

By Kesten's criterion,  $\varrho(\mu)=1$. Therefore, \Cref{cor:KR-for-mu} states that every orbit ideal $V_{\langle \mu \rangle v}$ admits a non-zero $\mu$-invariant positive linear functional. This is the needed assumption to apply \Cref{thm:invariant-VP}, not to the group $G$, but rather to the monoid generated by $\mu$. The conclusion follows.
\end{proof}

We can now complete our characterization of the groups $G$ that admit a stationary vector pricing, or equivalently a stationary conditional mean, on $\ell^\infty(G)$.

\begin{proof}[Proof of \Cref{ithm:stat:mean}]
Let $\mu$ be a symmetric non-degenerate measure on the group $G$. If $G$ is amenable, then we are done by applying \Cref{ithm:stat-abstract} to $V=\ell^\infty(G)$, except that we have not supposed $\mu$ finitely supported. However this restriction was only used in \Cref{cor:KR-for-mu}, which we proved without finite support restriction in the case of $\ell^\infty(G)$.

It remains to consider the case where $G$ is non-amenable. By Kesten's criterion, we now have $\varrho(\mu)<1$. We can therefore choose a number $z$ with
\begin{equation*}
1 < z < 1/\varrho(\mu).
\end{equation*}
We have  $\mu^{*n} (e) \leq \varrho(\mu)^n$ for all $n$, see e.g.\ Lemma~1.9 in~\cite{Woess_book}. Let $x\in G$; since $\mu$ is symmetric, we have
\begin{equation*}
\big(\mu^{*n} (x)\big)^2 = \mu^{*n} (x)\, \mu^{*n} (x\inv) \leq \mu^{*2n} (e) \leq \varrho(\mu)^{2n},
\end{equation*}
and therefore
\begin{equation*}
\mu^{*n} (x) \leq \varrho(\mu)^n  \kern3mm \forall x\in G, \forall n\in \NN.
\end{equation*}
This estimate shows at once that the generalized Green function
\begin{equation*}
\green_z(x) = \sum_{n=0}^{+\infty} z^n \mu^{*n}(x) \kern3mm (\text{where } x\in G)
\end{equation*}
converges and that it is a (non-zero, positive) \emph{bounded} function on $G$. Indeed we have the geometric series bound $\green_z(x) \leq 1/(1-z \varrho(\mu) )$ thanks to our choice of $z$. The definition of $\green_z$ implies
\begin{equation*}
z \,\mu * \green_z = \green_z - \mu^{*0} =\green_z - \delta_e
\leq \green_z.
\end{equation*}
Suppose now for a contradiction that $r$ is a $\mu$-stationary vector pricing on $\ell^\infty(G)$. We then have
\begin{equation*}
1 = r(\green_z, \green_z) =  r(\mu* \green_z, \green_z) \leq \frac1z  r(\green_z, \green_z) = \frac1z < 1,
\end{equation*}
quod est absurdum.
\end{proof}

Note that the above contradiction also works directly for a conditional mean instead of $r$, without appealing to \Cref{prop:VP:CM:equiv}, because $\mu* \green_z \leq \frac1z  \green_z \leq \green_z$. We can take $z$ rational and invoke \ref{pt:CM:add}.

\begin{rem}
One could play off stationarity against non-invariance when the group $G$ is amenable but without the fixed-point property for cones.

Indeed, if $\mu$ is a symmetric non-degenerate finitely supported probability distribution on $G$, then the above proof of \Cref{ithm:stat:mean} gives a $\mu$-stationary vector pricing $r$ on $\ell^\infty(G)$. On the other hand, \Cref{thm:FPC-abstract} shows that $r$ cannot be $G$-invariant. In other words, there is some $v\gneqq 0$ in $\ell^\infty(G)$ for which the function
\begin{equation*}
h\colon G \lra \RR^+, \kern3mm h(g) = r(g\inv v, v)
\end{equation*}
is non-constant. The same argument as in the proof of \Cref{cor:KR-for-mu} shows that $h$ is a $\mu$-harmonic function: $h * \mu=h$.

In conclusion, the assumption that $G$ fails the fixed-point property for cones implies that it admits non-constant positive harmonic functions for such $\mu$ (noting that this holds trivially for non-amenable groups).

This result, however, is not new. Indeed, groups without the fixed-point property for cones must have exponential growth~\cite[\S 8]{Monod_cones}, and it was recently proved by Amir--Kozma~\cite{Amir-Kozma} and Zheng~\cite[\S 4]{Zheng_ICM} that this guarantees the existence of non-constant positive harmonic functions. 
\end{rem}

Finally, we proceed to deduce \Cref{ithm:stat:meas} from the main result of~\cite{Alhalimi-Hutchcroft-Pan-Tamuz-Zheng_pre} using \Cref{thm:invariant-VP}.

\begin{proof}[Proof of \Cref{ithm:stat:meas}]
This time $\mu$ is a finitely supported non-degenerate probability distribution on an arbitrary group $G$ (which is necessarily finitely generated). Let $V$ be the ordered vector space of step-functions on $G$. Given any non-zero $v\in V^+$, let $c_\mathrm{max}, c_\mathrm{min}>0$ denote the maximal, respectively minimal non-zero value in the (finite!) range of $v$. We then have
\begin{equation*}
c_\mathrm{min} \, \one_A \leq v \leq c_\mathrm{max}\,\one_A, \kern3mm\text{where}\kern3mm A=\mathrm{supp}(v) \neq \varnothing.
\end{equation*}
This implies that the orbit ideals $V_{G v}$ and $V_{G \one_A}$ coincide.

The main result of~\cite{Alhalimi-Hutchcroft-Pan-Tamuz-Zheng_pre} is that for any non-empty set $A\se G$, there is a finitely additive $\mu$-stationary measure $m\geq 0$ on $G$ with $m(A)=1$. It is understood that $m$ can take infinite values; but $m$ defines a bona fide positive linear functional on $V_{\langle \mu \rangle v} = V_{G v} = V_{G \one_A}$, which is non-zero and $\mu$-stationary.

In other words, we are in a position to apply \Cref{thm:invariant-VP}  to the monoid generated by $\mu$ as in the proof of \Cref{ithm:stat-abstract} and deduce that $V$ admits a $\mu$-stationary vector pricing. This vector pricing gives in particular a $\mu$-stationary conditional mean on $V$, as claimed in \Cref{irem:step}, and hence also a $\mu$-stationary conditional probability.
\end{proof}

\bigskip

\section{Extending domains of definition}\label{sec:extend}
\nobreak\subsection{Step-functions}\label{sec:CP-CM}
Going back to the context of R{\'e}nyi, let $P$ be a conditional probability on a non-empty set $X$. The fact that it can be extended to a conditional mean on the space of step-functions presents no difficulties.

\begin{prop}\label{prop:CP-CM-step}
Any conditional probability extends uniquely to a conditional mean on the space of step-functions.
\end{prop}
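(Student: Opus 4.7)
The plan is to construct the extension explicitly from the linear extensions of the fibre probabilities $P(\cdot|B)$, verify the conditional mean axioms by appealing to Rényi's multiplicative axiom, and derive uniqueness via the conditional mean / vector pricing dictionary established in \Cref{prop:VP:CM:equiv}.

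First, let $V$ denote the ordered vector space of step-functions on $X$. For every non-empty $B \subseteq X$, the function $P(\cdot|B)$ is a finitely additive probability on $X$, hence extends uniquely to a positive linear functional $L_B\colon V \to \RR$ characterised by $L_B(\one_A) = P(A|B)$. I would then define
\[
\tilde P(f|h) := \frac{L_B(f)}{L_B(h)}, \qquad B := \mathrm{supp}(h),
\]
for $0 \leq f \leq h \neq 0$. The denominator is non-zero: writing $h = \sum_j d_j \one_{B_j}$ with $d_j > 0$ and $B_j$ partitioning $B$, one has $L_B(h) = \sum_j d_j P(B_j|B) \geq \min_j d_j > 0$ since $\sum_j P(B_j|B) = P(B|B) = 1$.

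Axioms~\ref{pt:CM:add} and~\ref{pt:CM:one} are then immediate from linearity and normalization of $L_B$. The heart of the matter is~\ref{pt:CM:trans}: given $0 \leq f_1 \leq f_2 \leq h$ with $f_2 \neq 0$ and $B_2 := \mathrm{supp}(f_2) \subseteq B$, one must verify $L_B(f_1) L_{B_2}(f_2) = L_{B_2}(f_1) L_B(f_2)$. Rényi's multiplicative axiom, applied with $B' = B_2$ and $C = B$ (noting $B_2 \cap B = B_2 \neq \varnothing$), yields $P(A|B_2) P(B_2|B) = P(A|B)$ for all $A \subseteq B_2$. When $P(B_2|B) > 0$, linearly extending gives $L_B(g) = P(B_2|B)\, L_{B_2}(g)$ for every $g$ supported in $B_2$, from which the required identity is immediate. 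When $P(B_2|B) = 0$, monotonicity forces $L_B$ to vanish on every non-negative step-function supported in $B_2$ (since any such $g$ is bounded by $M \one_{B_2}$), so both sides equal $0$; this degenerate case is the only delicate point.

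For uniqueness, let $Q$ be any conditional mean on $V$ extending $P$, and let $r_Q$ be its associated vector pricing via \Cref{prop:VP:CM:equiv}. By axiom~\ref{pt:VP:extends}, $r_Q(\cdot, \one_B)$ is a positive linear functional on the ideal $V_{\one_B}$ of step-functions supported in $B$, whose values on indicators $\one_A$ with $A \subseteq B$ coincide with $P(A|B)$ by the extension hypothesis; hence $r_Q(\cdot, \one_B) = L_B$ on $V_{\one_B}$ by uniqueness of the linear extension. For arbitrary $h$ with $\mathrm{supp}(h) = B$, axiom~\ref{pt:VP:cocycle} gives $r_Q(u, h) = r_Q(u, \one_B) / r_Q(h, \one_B) = L_B(u)/L_B(h)$ for every $u \in V^+ \cap V_{\one_B}$; restricting to $0 \leq f \leq h$ recovers $Q(f|h) = \tilde P(f|h)$, so the extension is unique.
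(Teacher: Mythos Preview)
Your proof is correct and follows essentially the same route as the paper: both extend each fibre probability $P(\cdot|B)$ to a linear functional on step-functions, define the conditional mean as the quotient $L_{\mathrm{supp}(h)}(f)/L_{\mathrm{supp}(h)}(h)$, and verify \ref{pt:CM:trans} via R{\'e}nyi's multiplicative axiom with the same degenerate-case split. Your uniqueness argument, routed through the vector pricing $r_Q$ and \ref{pt:VP:extends}--\ref{pt:VP:cocycle}, is more explicit than the paper's one-line appeal to \ref{pt:CM:trans}, but the underlying idea is the same.
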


\begin{proof}
Given a non-empty subset $A\se X$, we consider $P(\cdot|A)$ as a finitely additive probability measure on $A$, which has therefore a unique extension to a mean $P'(\cdot| A)$ on the space $\ell^\infty(A)$. Thus in particular $P'(\one_B|A) = P(B|A)$ when $B\se A$.

Let $V$ denote the space of step-functions on $X$. Given any $v\in V$ we write $A_v\se X$ for the support of $v$. The definition of step-functions implies that for any non-zero $v\in V^+$ we have $v\geq c_\mathrm{min} \one_{A_v}$, where $c_\mathrm{min}$ is the smallest non-zero value of $v$ as in the proof of \Cref{ithm:stat:meas}. This implies $P'(v | A_v) \geq c_\mathrm{min} >0$. We now define the conditional mean $P''$ on $V$ by
\begin{equation*}
P''(u|v) = \frac{P'(u|A_v)}{P'(v|A_v)} \in[0, 1]\kern3mm \text{when}\kern3mm 0\leq u \leq v \neq 0.
\end{equation*}
\ref{pt:CM:add} and \ref{pt:CM:one} hold by definition. For \ref{pt:CM:trans}, consider $0\leq u \leq v \leq w$ with $v\neq 0$. In particular, $\varnothing\neq A_v \se A_w$ and for any $B\se A_v$ the axioms of conditional probability give $P(B|A_v) P(A_v|A_w) = P(B|A_w)$. By uniqueness of the extension from (unconditional) probability to mean, it follows
\begin{equation*}
P'(u|A_v) \, P(A_v|A_w) = P'(u|A_w)\kern3mm \text{and}\kern3mm P'(v|A_v) \, P(A_v|A_w) = P'(v|A_w).
\end{equation*}
This implies
\begin{multline*}
P''(u|v) \, P''(v|w) =\frac{P'(u|A_v)}{P'(v|A_v)} \, \frac{P'(v|A_w)}{P'(w|A_w)} =\frac{P'(u|A_v)}{P'(v|A_v)} \, \frac{ P'(v|A_v) \, P(A_v|A_w)} {P'(w|A_w)} \\
 = \frac{P'(u|A_w)}{P'(v|A_v)} \, \frac{ P'(v|A_v)} {P'(w|A_w)} = P''(u|w),
\end{multline*}
confirming \ref{pt:CM:trans}. Now that we can use \ref{pt:CM:trans}, the uniqueness of the extension from $P'$ to $P''$ follows from the definition of $P''$.
\end{proof}

\subsection{Globalizing conditional means}\label{sec:hyper-Arch}
At the end of the Introduction, we recalled that conditional means $P(A|B)$ are usually defined for arbitrary subsets $A, B$ (with $B\neq \varnothing$) rather than only $A\se B$, although these two approaches are equivalent: it suffices to replace $P(A|B)$ by $P(A \cap B|B)$.

\medskip
In general ordered vector spaces, there is no operation corresponding to the intersection $A\cap B$ to define $P(u|v)$ for arbitrary $u,v\in V^+$. At the very least, we need a min-operation $u\wedge v$ for the order to generalize $A\cap B$. It is well-known~\cite[\S1]{Aliprantis-Tourky} that this forces $V$ to be a \textbf{vector lattice} (=Riesz space); in particular the max $u\vee v$ and the absolute value $|u|=u\vee (-u)$ are defined. This is not really a restriction, because any ordered vector space can be embedded into a vector lattice (Theorem~4.3 in~\cite{Luxemburg86}), even in a canonical way (loc.\ cit., top of page~228). At any rate, $\ell^\infty(X)$ and the space of step-functions are vector lattices.

However, we need more to extend conditional means, which are additive contrary to lattice operations such as $u\wedge v$. This will be the role of the band projections introduced below, and it brings a strong restriction to the vector lattices that we can consider. Recall from \Cref{sec:topol} that an ordered vector space is Archimedean if $\{n v:n\in \NN\}$ does not admit an upper bound, unless $v\leq 0$.

\begin{defi}
A vector lattice is \textbf{hyper-Archimedean} if all its quotient vector lattices are Archimedean.
\end{defi}

We refer to~\cite{Conrad74} for an overview of hyper-Archimedean vector lattices (which are called ``epi-Archimedean'' in that reference); notably, the space of step-functions on a set is hyper-Archimedean. There exists also hyper-Archimedean examples that cannot be embedded into any step-function space~\cite{Bernau74}. However, $\ell^\infty(X)$ is not hyper-Archimedean when $X$ is infinite; compare also \Cref{rem:self-maj} below.

Bigard has established a characterization of hyper-Archimedean vector lattices that is particularly relevant for the theme of generalizing conditional measures, namely: \itshape a vector lattice is hyper-Archimedean if and only if it can be embedded into the vector lattice of compactly supported continuous real functions on some Hausdorff topological space \upshape (Th{\'e}or{\`e}me~4.2 in~\cite{Bigard}).

\medskip
We can now give a more explicit version of \Cref{iprop:hyper-Arch}.

\begin{prop}\label{prop:hyper-Arch-explicit}
Let $P$ be a conditional mean on a hyper-Archimedean vector lattice $V$.

Then $P$ has a canonical extension for which $P(\cdot | v)$ is a positive linear functional defined on the entire space $V$ for every $v\gneqq 0$. 

Moreover, this extension is determined by the formula
\begin{equation*}
\forall \, u\in V^+: \kern3mm P(u|v) = \lim_{n\to\infty}  n\, P\left(u \wedge n v |n v\right).
\end{equation*}
\end{prop}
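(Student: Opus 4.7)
The plan is to define $P(u|v)$ for $u \in V^+$ and $v \gneqq 0$ via a band-type projection onto $V_v$, then extend linearly to all of $V$. The given formula $P(u|v) = \sup_n P(u \wedge nv|v)$ suggests studying the increasing sequence $u \wedge nv$; the telescoping identity
\[
u \wedge (n+1)v - u \wedge nv = v \wedge (u - nv)^+
\]
holds in any vector lattice. A classical characterization of hyper-Archimedean vector lattices~\cite{Conrad74} asserts that for every $u,v\in V^+$ there exists $N\in\NN$ with $(u - Nv)^+ \wedge v = 0$. Combined with the telescoping identity, this forces the increasing sequence $u\wedge nv$ to stabilize at some $\pi_v(u) \leq Nv$, which therefore lies in $V_v^+$. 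Since $P(\cdot|v)$ is already a positive linear functional on the ideal $V_v$ (via the vector-pricing correspondence and property~\ref{pt:VP:extends}), the supremum in the formula is attained at any sufficiently large $n$ and equals $P(\pi_v(u)|v) \in [0, +\infty)$. This defines $P(u|v)$ for $u\in V^+$, and the extension to all $u \in V = V^+ - V^+$ is then forced by $P(u|v) := P(u^+|v) - P(u^-|v)$.

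The main point to check is the additivity of $\pi_v$ on $V^+$. Given $u_1,u_2\in V^+$, write $u_i = \pi_v(u_i) + w_i$ with $w_i := u_i - \pi_v(u_i) \geq 0$ disjoint from $v$. Since the disjoint complement of $v$ is a band, $w := w_1 + w_2$ is disjoint from $v$, and a standard disjointness argument in Archimedean lattices then gives $w \wedge mv = 0$ for every $m \geq 0$. Applying the lattice identity $(a+b)\wedge c = a + (b\wedge(c-a))$ with $a := \pi_v(u_1) + \pi_v(u_2)$ and $c := mv$ for $m$ so large that $a \leq mv$, together with $w\wedge(mv - a) \leq w\wedge mv = 0$, yields $(u_1+u_2)\wedge mv = \pi_v(u_1) + \pi_v(u_2)$. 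Hence $\pi_v(u_1+u_2) = \pi_v(u_1)+\pi_v(u_2)$. Positive homogeneity of $\pi_v$ is straightforward (rescale the stabilization index), and composing with the positive linear functional $P(\cdot|v)$ on $V_v$ then produces a positive additive map $V^+ \to [0,+\infty)$, which uniquely determines a positive linear functional on $V$.

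Agreement with the original $P$ on the region $0 \leq u \leq v$ is immediate: then $u\wedge nv = u$ for all $n \geq 1$, so $\pi_v(u) = u$. For the step-function addendum, $\one_A \wedge n\one_B = \one_{A\cap B}$ for every $n \geq 1$ (with $A, B \neq \varnothing$), so $\pi_{\one_B}(\one_A) = \one_{A\cap B}$ and the identity $P(\one_A|\one_B) = P(\one_{A\cap B}|\one_B)$ is immediate. The principal obstacle is carrying out the additivity of $\pi_v$ without order-completeness; this is exactly where the hyper-Archimedean hypothesis enters, by making the otherwise transfinite ``band projection'' onto $V_v$ attained at a finite stabilization index, so that the lattice manipulations reduce to routine bookkeeping.
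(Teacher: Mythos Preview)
Your argument is correct and follows essentially the same strategy as the paper: show that the increasing sequence $u\wedge nv$ stabilizes, so that the supremum in the formula is actually a maximum lying in $V_v$, and then compose with the positive linear functional $P(\cdot|v)$ already defined on $V_v$.

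The organizational difference is worth noting. The paper factors the proof through an intermediate notion: an element $v$ is called \emph{self-majorizing} if $|u|\wedge n|v|\leq \lambda|v|$ for some $\lambda$ depending on $u$, and the paper first proves the extension result for any self-majorizing $v$ in an arbitrary vector lattice (\Cref{prop:self-maj}), invoking a result of Feldman--Porter that $V_v$ is then a projection band with $p_v(u)=\sup_n(u\wedge nv)$. Hyper-Archimedean lattices enter only at the end, via the fact (Amemiya, Luxemburg--Moore) that in such a lattice \emph{every} element is self-majorizing. This buys a bonus: even in $\ell^\infty(X)$, which is not hyper-Archimedean, one can still extend $P(\cdot|h)$ whenever the positive values of $h$ are bounded away from zero (\Cref{rem:self-maj}). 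Your route instead invokes directly the Conrad characterization $(u-Nv)^+\wedge v=0$ and verifies the band-projection properties by hand via the telescoping identity and disjointness bookkeeping; this is more self-contained and avoids the external citations, at the cost of not isolating the self-majorizing case as a standalone result.
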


If $V$ is a space of step-functions, we see that the above formula gives $P(\one_A|\one_B)=  P(\one_{A\cap B}|\one_B)$ for arbitrary $A,B \neq \varnothing$, as claimed in \Cref{iprop:hyper-Arch}. Indeed, $\one_A \wedge n \one_B = \one_{A\cap B}$ holds for all $n\geq 1$.

\medskip
The crucial ingredient for the proof of \Cref{prop:hyper-Arch-explicit} is the following notion. An element $v\in V$ of a vector lattice $V$ is called \textbf{self-majorizing} if
\begin{equation*}
\forall u\in V \ \exists \lambda \in\RR \ \forall n\in \NN : \kern3mm |u| \wedge n |v| \leq \lambda |v|.
\end{equation*}
This has been studied since the 1950s~\cite{Amemiya53}; a more recent reference is~\cite{Teichert-Weber}.

\begin{prop}\label{prop:self-maj}
Let $P$ be a conditional mean on a vector lattice $V$.

For every non-zero self-majorizing element $v\in V^+$, there is a canonical extension of $P(\cdot | v)$ to a positive linear functional on $V$ such that
\begin{equation*}
\forall \, u\in V^+: \kern3mm P(u|v) = \lim_{n\to\infty}  n\, P\left(u \wedge n v | n v\right).
\end{equation*}
\end{prop}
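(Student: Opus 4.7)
The plan is to verify directly that the right-hand side of the displayed formula defines a positive linear extension of $P(\cdot|v)$. To make $P(u\wedge nv|v)$ meaningful for arbitrary $u\in V^+$, one first needs $P(\cdot|v)$ already extended to the principal ideal $V_v$, where the truncation $u\wedge nv\le nv$ lives. This extension is in hand: via Proposition~\ref{prop:VP:CM:equiv} the associated vector pricing $r_P(\cdot,v)$ is a positive linear functional on $V_v$ by Lemma~\ref{lem:VP:properties}\ref{pt:VP:extends}, and it agrees with $P(\cdot|v)$ wherever both are defined.

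From there I would carry out four steps. First, $\tilde P(u|v):=\sup_{n} P(u\wedge nv|v)$ is a well-defined non-negative real for each $u\in V^+$: the sequence $(u\wedge nv)_n$ is non-decreasing, and self-majorization of $v$ supplies $\lambda\ge 0$ with $u\wedge nv\le \lambda v$ for every $n$, whence $P(u\wedge nv|v)\le \lambda P(v|v)=\lambda$. Second, $\tilde P$ is consistent with the pre-existing definition on $V_v^+$: if $u\le mv$ then $u\wedge nv=u$ for $n\ge m$, so $\tilde P(u|v)=P(u|v)$.

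The substantive step is additivity on $V^+$. For $u_1,u_2\in V^+$, I would invoke the two standard Riesz-space inequalities
\begin{equation*}
(u_1+u_2)\wedge nv\ \le\ (u_1\wedge nv)+(u_2\wedge nv)\ \le\ (u_1+u_2)\wedge 2nv,
\end{equation*}
both of which are valid in every vector lattice (the right-hand one is trivial since each summand is bounded by $u_i$ and by $nv$; the left-hand one is a classical identity). Applying the linear functional $P(\cdot|v)$ on $V_v$ and passing to the supremum in $n$, using cofinality of $n\mapsto 2n$ in $\NN$, yields $\tilde P(u_1+u_2|v)=\tilde P(u_1|v)+\tilde P(u_2|v)$. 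Monotonicity of $\tilde P$ on $V^+$ is immediate from $u\le w\Rightarrow u\wedge nv\le w\wedge nv$.

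Finally, since $V=V^+-V^+$ in any vector lattice, the standard extension lemma (\cite[Lemma~1.26]{Aliprantis-Tourky} or~\cite[Lemma~83.1]{ZaanenII}) promotes the positive additive map $\tilde P(\cdot|v)$ on the cone $V^+$ to a positive linear functional on the whole space $V$. By construction this extension satisfies the displayed formula and restricts to $P(\cdot|v)$ as already defined. The main delicate point throughout is the finiteness of the supremum, which is exactly what the self-majorization hypothesis guarantees; everything else is a routine verification resting on standard lattice identities.
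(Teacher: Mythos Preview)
Your argument is correct, and it takes a genuinely different route from the paper's. The paper first observes that self-majorization forces the sequence $(u\wedge nv)_n$ to be \emph{eventually constant}, then cites a result of Feldman--Porter to identify $V_v$ as a projection band with band projection $p_v(u)=\sup_n(u\wedge nv)$; the extension is then simply $P(p_v(\cdot)\,|\,v)$, so linearity comes for free as a composition of linear maps, and the displayed formula follows because the supremum is actually attained (the paper stresses that this matters since $P(\cdot|v)$ need not be order-continuous).

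Your approach bypasses band-projection theory entirely: you define the extension directly by the supremum of real numbers and prove additivity by the sandwich
\[
(u_1+u_2)\wedge nv \ \le\ (u_1\wedge nv)+(u_2\wedge nv)\ \le\ (u_1+u_2)\wedge 2nv,
\]
which is the Riesz decomposition inequality on the left and trivial on the right. This is more elementary and self-contained (no Feldman--Porter citation), and it never needs to evaluate $P$ on a lattice supremum, so the order-continuity issue the paper flags simply does not arise. The paper's approach, on the other hand, makes the word ``canonical'' more transparent by identifying the extension with the concrete operator $P(\cdot|v)\circ p_v$, and gets linearity without the sandwich computation. Both are valid; yours is shorter and more direct, the paper's is more structural.
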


\begin{rem}\label{rem:self-maj}
It is not hard to verify that a bounded function $h\gneqq 0$ in $\ell^\infty(X)$ is self-majorizing if and only if its positive values admit a non-zero lower bound; see e.g.\ Example~1 p.~831 in~\cite{Teichert-Weber}. Therefore,  \Cref{prop:self-maj} shows that, given a conditional mean $P$ on $\ell^\infty(X)$, we can still extend $P(\cdot| h)$ for all these functions $h$.

Observe that this lower bound condition is the only property of step-functions used in \Cref{prop:CP-CM-step}. Furthermore, an examination of the proof of the main result in~\cite{Alhalimi-Hutchcroft-Pan-Tamuz-Zheng_pre} used for \Cref{ithm:stat:meas} shows that this condition is also the exact cause of the dramatic difference in outcomes compared to \Cref{ithm:stat:mean}.
\end{rem}

\begin{proof}[Proof that \Cref{prop:self-maj} implies \Cref{prop:hyper-Arch-explicit}]
A result going essentially back to Amemiya (end of \S\,I.6, p.~126 in~\cite{Amemiya53}) is that in a hyper-Archimedean vector lattice every element is self-majorizing, and therefore \Cref{prop:self-maj} applies. The expression for $P(u|v)$ with $u\in V^+$ determines $P$ everywhere since in any vector lattice the positive cone is generating.

An alternative and detailed reference instead of~\cite{Amemiya53} is Corollary~7.2 in~\cite{Luxemburg-Moore}.
\end{proof}

\begin{proof}[Proof of \Cref{prop:self-maj}]
Let $v\in V^+$ be a non-zero self-majorizing element. Given $u\in V^+$, let $\lambda>0$ be a scalar such that $u \wedge n v \leq \lambda v$ for all $n\in\NN$. It follows
\begin{equation*}
\forall n, m\in \NN \text{ with } m\geq \lambda : \kern3mm u \wedge n v \leq u \wedge \lambda v \leq  u \wedge m v.
\end{equation*}
In other words, the sequence $u \wedge n v$ is eventually constant and in particular admits a maximum.

Now we recall three facts from Proposition~1 in~\cite{Feldman-Porter} (which are all consequences of the above calculation through manipulations of mins and absolute values). First, the ideal $V_v$ is a \textbf{band}, which means that every set in $V_v$ that admits a supremum in $V$ has its supremum in $V_v$. Secondly, it is a \textbf{projection band}, which means that it comes with a positive linear projection $p_v\colon V \to V_v$. Finally, this projection satisfies
\begin{equation*}
\forall u\in V^+: \kern3mm p_v(u) = \sup\{ u \wedge n v : n\in \NN\} \in V_v.
\end{equation*}
That last expression is implicit in the above reference but explicit e.g.\ in~\cite[Theorem~24.7]{Luxemburg-Zaanen} or~\cite[Theorem~1.46]{Aliprantis-Burkinshaw}.

We now define the desired extension of $P(\cdot|v)$ by considering $r_P(p_v(\cdot), v)$, where $r_P$ is the vector pricing corresponding to $P$, as exhibited in \Cref{prop:VP:CM:equiv}. This is indeed a positive linear functional on $V$.

In order to justify that we do have the stated formula for this extension, we fix some $u\in V^+$ and some $\lambda>0$ such that $u \wedge n v \leq \lambda v$ for all $n\in\NN$. We can and do take $\lambda$ to be an integer, so that the above discussion gives
\begin{equation*}
 p_v(u) = u \wedge \lambda v = u \wedge n v \kern3mm \forall n\geq \lambda.
\end{equation*}
Therefore, for every $n\geq \lambda$ we have
\begin{equation*}
r_P(p_v(u), v) = r_P(u \wedge n v, v) = n \, r_P(u \wedge n v, n v) = n\, P(u \wedge n v| n v),
\end{equation*}
confirming the claim.
\end{proof}

\subsection{Staying positive}\label{sec:negative}
Consider now the suggestion that a vector pricing $r\colon V^+ \times V^+\to [0, +\infty]$ could be extended to a map $V\times V \to [-\infty, +\infty]$, at least when the order is generating. After all, for a fixed $v\gneqq 0$, we have defined a linear functional $r(\cdot, v)$ on a suitable domain of definition by the formula $r(u,v) = r(u', v) - r(u'', v)$ where $u=u'-u''$ with $u', u'' \in V^+$.

The obvious obstruction to a naive extension is that the additivity axiom \ref{pt:VP:add} must now be restricted to avoid the indeterminacy $+\infty -\infty$, unlike in $[0, +\infty]$, where addition is defined everywhere.

\medskip\itshape
We claim that regardless how much \ref{pt:VP:add} is restricted by indeterminacies, it is impossible to extend $r$ as above or in any other natural way.\upshape

Here is one way to turn this affirmation into a precise statement. In analogy to the phrasing of \ref{pt:VP:cocycle}, say that \ref{pt:VP:add} holds ``when defined'' if we do not impose any condition when $r(u, w)=+\infty$ and $r(v, w)=-\infty$ or vice-versa.

\begin{prop}\label{prop:negative}
Let $G$ be any non-trivial group and let $V$ be either $\ell^\infty(G)$ or the space of step-functions.

There does not exist any $G$-invariant map $V\times V \to [-\infty, +\infty]$ that satisfies \ref{pt:VP:add} when defined and \ref{pt:VP:one}.
\end{prop}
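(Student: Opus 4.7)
The plan is to find, for any non-trivial $g\in G$, a non-zero $h\in V$ together with an integer $n\geq 2$ such that the $\langle g\rangle$-translates $h, gh, \ldots, g^{n-1}h$ sum to zero in $V$. Combined with $G$-invariance in the sense $r(gu,v) = r(u,v)$ (cf.\ condition~(i) of \Cref{lem:def-inv}, read as the natural interpretation for a map that no longer obeys \ref{pt:VP:cocycle}) and the additivity \ref{pt:VP:add}, this will force $r(h,w) = 0$ for every $w\neq 0$, and in particular $r(h,h) = 0$, contradicting \ref{pt:VP:one}. I expect the main difficulty to be locating the right $h$ when $g$ has infinite order, since the naive Dirac-difference that works for torsion elements produces translates supported on pairwise disjoint pairs of points when $g$ has infinite order, with no finite vanishing relation among them.

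To prepare the ground, I first record that $r(0,w) = 0$ for every $w\neq 0$: applying \ref{pt:VP:add} to the decomposition $w = 0 + w$ (well-defined since $r(w,w)=1$ is finite) gives $r(w,w) = r(0,w) + r(w,w)$, whence $r(0,w)=0$. Now to construct $h$: if $g$ has finite order $n\geq 2$, set $h := \one_{\{e\}} - \one_{\{g\}}$, a step function (hence in both $\ell^\infty(G)$ and the step-function space), non-zero since $e\neq g$; telescoping yields $\sum_{i=0}^{n-1}g^i h = \one_{\{e\}} - \one_{\{g^n\}} = 0$. If instead $g$ has infinite order, use the axiom of choice to pick a set $T$ of representatives for the $\langle g\rangle$-orbits on $G$ under left multiplication, so that every $y\in G$ is uniquely $y = g^k t$ with $k\in\ZZ$, $t\in T$, and define $h(g^k t) := (-1)^k$; this $h$ takes only the values $\pm 1$, lies in both candidate spaces, is non-zero, and satisfies $gh = -h$, so that with $n=2$ we have $h + gh = 0$.

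It remains to derive the contradiction. For any $w\neq 0$, apply \ref{pt:VP:add} iteratively to the identity $\sum_{i=0}^{n-1}g^i h = 0$. By $G$-invariance, $r(g^i h, w) = r(h,w)$ for every $i$, so each partial sum encountered is an integer multiple of $r(h,w)$; in particular no partial sum triggers the indeterminate form $+\infty - \infty$, which requires two opposite infinities. Hence $n\cdot r(h,w) = r(0,w) = 0$, and this equation forces $r(h,w)$ to be finite (since an infinite value would make the product $\pm\infty\neq 0$), so $r(h,w) = 0$. Setting $w := h$ gives $r(h,h)=0$, contradicting \ref{pt:VP:one}.
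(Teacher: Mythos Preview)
Your argument is correct and follows essentially the same strategy as the paper: both establish $r(0,w)=0$, then produce a non-zero vector whose $\langle g\rangle$-translates sum to zero (the Dirac difference for torsion $g$, a $\pm 1$-valued function with $gh=-h$ for infinite-order $g$), and extract a contradiction via \ref{pt:VP:add} and invariance.

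One small remark: for the infinite-order case you invoke the axiom of choice to select a transversal and define $h$ on all of $G$, whereas the paper simply sets $u(g^n)=(-1)^n$ on the cyclic subgroup and $u=0$ elsewhere, which is a step function without any choice. The paper also organizes the infinite-order contradiction slightly differently, first proving $r(-u,u)=-1$ and then reading off $1=r(u,u)=r(gu,u)=r(-u,u)=-1$ directly; your unified ``$n\cdot r(h,h)=0$'' treatment is perfectly fine and arguably cleaner, but the paper's route spares you the inductive check that partial sums never hit $+\infty-\infty$.
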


Here as always $G$-invariance is understood in the sense of \Cref{lem:def-inv}.

\Cref{prop:negative} justifies the above claim because any natural extension of an invariant vector pricing provided by \Cref{ithm:FPC} or \Cref{ithm:FPC-abstract} for a non-trivial group would still be invariant and thus contradict \Cref{prop:negative}.

\smallskip
(If we wanted just the existence of \emph{some} map on $V\times V$ with \ref{pt:VP:add} and \ref{pt:VP:one}, then this can easily be obtained, even real-valued, from the axiom of choice.)

\begin{proof}[Proof of \Cref{prop:negative}]
We start with the observation that $r(0,u)=0$ for all $u\neq 0$. Indeed, the following application of \ref{pt:VP:add}
\begin{equation*}
1 = r(u,u) = r(u+0, u) = r(u, u) + r(0,u) = 1 + r(0,u) 
\end{equation*}
cannot be undefined. Using this, we deduce that $r(-u, u) = - 1$ holds for all $u\neq 0$ because again
\begin{equation*}
0 = r(0,u) = r(u-u, u) = r(u, u) + r(-u,u) = 1 + r(-u,u) 
\end{equation*}
cannot be undefined.

Suppose now first that $G$ contains an element $g$ of infinite order. Then we define a step-function $u\in \ell^\infty(G)$ by $u(x)=(-1)^n$ if $x=g^n$ for some $n\in\ZZ$ and $u(x)=0$ otherwise. If $r$ were an invariant vector pricing, then
\begin{equation*}
1 = r(u, u) = r (g u, u) = r(-u, u) = -1,
\end{equation*}
which is absurd.

Therefore it remains only to consider the case of a non-trivial group $G$ in which every element has finite order (in fact, \emph{even} orders can also be treated as above). Let thus $g\in G$ have finite order $q>1$. This time we set $u=\delta_e - \delta_g \neq 0$ and compute
\begin{equation*}
q = q\, r(u,u) = \sum_{n=0}^{q-1} r(g^n u,u) = r\left(\sum_{n=0}^{q-1} g^n u, u\right) = r(0, u) = 0,
\end{equation*}
again absurd. All values of $r$ appearing in the above rather dim sum are~$1$, then~$0$, so that no hypothetical $+\infty - \infty$ could be used as a fig leaf.
\end{proof}


\bibliographystyle{amsalpha-nobysame}
\bibliography{../BIB/ma_bib}

\end{document}